\def\Var{\textup{Var}}
\pgfplotsset{
     standard/.style={
        axis x line=middle,
        axis y line=middle,
        every axis x label/.style={at={(current axis.right of origin)},anchor=north west},
        every axis y label/.style={at={(current axis.above origin)},anchor=north east}
    }
}
\pgfplotsset{width=13cm,compat=1.16}
\theoremstyle{plain}
\newtheorem{theorem}{Theorem}[section]
\newtheorem{corollary}[theorem]{Corollary}
\newtheorem*{corollary*}{Corollary}
\newtheorem{prop}[theorem]{Proposition}
\newtheorem{lemma}[theorem]{Lemma}
\newtheorem*{proposition*}{Proposition}
\newtheorem*{theorem*}{Theorem}
\newtheorem*{lemma*}{Lemma}
\newtheorem*{claim*}{Claim}
\theoremstyle{definition}
\newtheorem*{definition*}{Definition}
\theoremstyle{remark}
\newtheorem*{obs*}{Observation}
\newtheorem{remark}{Remark}
\newcommand{\pr}[1]{\mathbb{P}\left(#1\right)}
\newcommand{\Bin}{\ensuremath{\mathrm{Bin}}}
\newcommand{\eps}{\varepsilon}
\newcommand{\Ex}[1]{\mathbb{E}\left[#1\right]}
\newcommand{\eq}[1]{\begin{equation}\label{eq:#1}}
\newcommand{\eqe}{\end{equation}}
\newcommand{\eqr}[1]{\eqref{eq:#1}}
\def\F{\mathcal{F}}
\def\HH{\mathcal{H}}
\def\RR{\mathbb{R}}
\def\eps{\varepsilon}
\def\le{\leqslant}
\def\ge{\geqslant}
\def\Bin{\textup{Bin}}
\def\<{\langle}
\def\>{\rangle}
\def\0{\textbf{0}}
\def\y{\mathbf{y}}
\newcommand{\rset}[1]{B_{#1}}
\newcommand{\countx}[3]{\ifthenelse{\isempty{#1}{}}{N^{#2}(\rset{#3})}{{N_{#1}^{#2}(\rset{#3})}}}
\newcommand{\inc}[3]{X_{#1}^{#2}(\rset{#3})}
\definecolor{redor}{rgb}{0.1,0.1,0.1}
\definecolor{sgreen}{rgb}{0.5, 1, 0.5} 
\definecolor{lblue}{rgb}{0.8, 0.82, 1} 
\definecolor{midblue}{rgb}{0.5, 0.5, 1} 
\definecolor{dblue}{rgb}{0.2, 0.2, 0.9}
\definecolor{lgr}{rgb}{0.8, 0.8, 0.8} 
\definecolor{purp}{rgb}{0.9, 0, 0.9} 
\definecolor{lred}{rgb}{1, 0.8, 0.8} 
\definecolor{dred}{rgb}{1, 0.5, 0.5} 
\definecolor{lgreen}{rgb}{0.8, 1, 0.8}
\definecolor{midgreen}{rgb}{0.3, 0.9, 0.3}
\definecolor{dgreen}{rgb}{0.1, 0.8, 0.1} 
\definecolor{dyellow}{rgb}{0.9, 0.9, 0} 
\definecolor{morange}{rgb}{1, 0.5, 0} 
\definecolor{turquoise}{rgb}{0, 0.8, 1}
\title[Deviation probabilities for arithmetic progressions]{Deviation probabilities for arithmetic progressions and irregular discrete structures}
\author{Simon Griffiths}
\address{Simon Griffiths, Departamento de Matem\'atica, PUC-Rio, Rua Marqu\^es de S\~ao Vicente 225, G\'avea, Rio de Janeiro 22451-900, Brazil}		
\email{simon@mat.puc-rio.br}
\author{Christoph Koch}
\address{Christoph Koch, Department of Statistics, University of Oxford, 24-29 St Giles', Oxford OX1 3LB, United Kingdom}
\email{christoph.koch@stats.ox.ac.uk}
\author{Matheus Secco}
\address{Matheus Secco, The Czech Academy of Sciences, Institute of Computer Science, Pod Vod\'{a}renskou v\v{e}\v{z}\'{\i} 2, 182 07 Prague, Czech Republic}
\email{matheussecco@gmail.com}
\thanks{\textbf{Acknowledgements:}. We would like to thank the organisers of the Workshop on structure and randomness in hypergraphs at the London School of Economics where this project began, in December 2018. We would like also to thank fellow participants Gonzalo Fiz Pontiveros and Matías Pavez Signé for their involvement in many fruitful discussions.  S.G. was supported by CNPq bolsas de produtividade em pesquisa (Proc. 310656/2016-8 and Proc. 307521/2019-2) and FAPERJ Jovem cientista do nosso estado (Proc. 202.713/2018).  C.K. was supported by EPSRC grant EP/N004833/1.  M.S. was supported during his PhD by the CAPES funding agency and subsequently by the Czech Science Foundation, grant number GJ20-27757Y, with institutional support RVO:6798580.  M.S. and C.K. thank PUC-Rio for its hospitality during their PhD/visit respectively.}
\begin{document}
\begin{abstract} 
Let the random variable $X\, :=\, e(\HH[B])$ count the number of edges of a hypergraph $\HH$ induced by a random $m$-element subset $B$ of its vertex set.  Focussing on the case that the degrees of vertices in $\HH$ vary significantly we prove bounds on the probability that $X$ is far from its mean.  It is possible to apply these results to discrete structures such as the set of $k$-term arithmetic progressions in the $\{1,\dots, N\}$.  Furthermore, our main theorem allows us to deduce results for the case $B\sim B_p$ is generated by including each vertex independently with probability $p$.  In this setting our result on arithmetic progressions extends a result of Bhattacharya, Ganguly, Shao and Zhao~\cite{BGSZ}.
We also mention connections to related central limit theorems.
\end{abstract}
\maketitle

\section{Introduction}

Let $W_3$ be the number of $3$-term arithmetic progressions in a random set $B_p\subseteq [N]=\{1,\dots ,N\}$ in which each element is included independently with probability $p$.  What can be said about the upper tail of the random variable $W_3$?  This question has been extensively studied in recent years and we shall use a discussion of this problem to motivate our results.

Given sequences $p=p_N$ and $\delta=\delta_N$ we may define a rate associated with the corresponding deviation by
\[
r(N,p,\delta)\, :=\, -\log\big(\pr{W_3\, \ge\, (1+\delta) \Ex{W_3}}\big)\, .
\]
The case that $\delta>0$ is a fixed constant is known as the \emph{large deviations regime}.  The asymptotic value 
\[
r(N,p,\delta)\, =\, (1+o(1))\delta^{1/2}p^{3/2}N\log(1/p)
\]
was obtained by Harel, Mousset and Samotij~\cite{HMS} for all $\delta>0$ and across the whole range of densities $N^{-2/3}(\log{N})^{2/3}\ll p\ll 1$.  This improved on earlier results of Bhattacharya, Ganguly, Shao and Zhao~\cite{BGSZ} (which covered the cases $p\ge N^{-1/36}(\log{N})^{1/3}$) and Warnke~\cite{WAP} (which gave the value of $r(N,p,\delta)$ up to a multiplicative constant).

We remark that Bhattacharya, Ganguly, Shao and Zhao~\cite{BGSZ} and Warnke~\cite{WAP} also obtained some moderate deviation results (with $\delta_N\to 0$).  In particular, Warnke~\cite{WAP} obtained the value of $r(N,p,\delta)$ up to a constant factor:
\[
r(N,p,\delta)\, =\, \Theta(1)\min\{\delta_N^{2}pN\, ,\, \delta_N^{2}p^{3}N^2 \, ,\, \delta_N^{1/2}p^{3/2}N\log(1/p)\}
\]
provided $\delta\ge N^{-1/18+\eps}$ for some $\eps>0$.  The three types of behaviour correspond to three \emph{regimes}.  We may define three regimes  \emph{Normal}, \emph{Poisson} and \emph{localised} according to which of $\delta_N^{2}pN\, ,\, \delta_N^{2}p^{3}N^2$ and $\delta_N^{1/2}p^{3/2}N\log(1/p)$ is minimal.  It is widely believed that
\[
r(N,p,\delta)\, =\, (1+o(1))\min\{3\delta_N^{2}pN/56(1-p)\, ,\, \delta_N^{2}p^{3}N^2/8 \, ,\, \delta_N^{1/2}p^{3/2}N\log(1/p)\}
\]
across the whole range of $p,\delta$ which correspond to moderate deviations in sparse random sets, i.e., $p\to 0$ and $\max\{1/\sqrt{pN},1/p^{3/2}N\}\ll \delta\ll1$, except possibly for hybrid behaviour near the borders between regimes.  Bhattacharya, Ganguly, Shao and Zhao~\cite{BGSZ} used a large deviation principle of Eldan~\cite{Eld}, which improved quantitively on~\cite{CD}, to prove this result for a part of the localised regime and part of the normal regime.  That is, they proved that $r(N,p,\delta)=\delta_N^{1/2}p^{3/2}N\log(1/p)$ provided $p\to 0$ and $\delta_N\gg p^{1/3}(\log(1/p))^{2/3}$, and $r(N,p,\delta) = (1+o(1))3\delta_N^{2}pN/56(1-p)$
provided $p \to 0$ and 
\[
p^{-3}N^{-1/6}(\log{N})^{7/6}\, ,\, p^{-1/2} N^{-1/12}(\log{N})^{7/12}\, \le \, \delta_N\,\ll\,  p^{1/3}(\log(1/p))^{2/3}\, .
\] 
In this context our contribution is to prove the same asymptotic value provided $\sqrt{\log{N}/pN} \ll \delta_N \ll p^{1/2}$.  

The following figure illustrates these regions.  We consider here the cases $p = N^{\gamma}$ and $\delta_N = N^{\theta}$, where $\gamma, \theta \le 0$.

We remark that the form of the expression for $r(N,p,\delta)$ is given by $(1+o(1))\delta_N^2\Ex{W_3}^2/2\Var(W_3)$ in both the Normal and Poisson regimes.  The only distinction is that the leading term in the variance is $(1+o(1))7p^5N^3(1-p)/12$ in the Normal regime and $(1+o(1))\Ex{W_3}=(1+o(1))p^3N^2/4$ in the Poisson regime.

Our results also apply to the lower tail.  It would be of interest to determine the asymptotic rate for the lower tail in general.  The order of magnitude $\Theta(1)\min\{p^3N^2,pN\}$ may be obtained using Harris’ inequality~\cite{Harris} and Janson’s inequality, see Theorem 2.14 of~\cite{JLR}.  See~\cite{JW2015} and~\cite{MNPS} for more detailed discussions of lower tail problems.

\begin{figure}[H]
\centering  
  \resizebox{16cm}{!}{
\begin{tikzpicture}
\begin{axis}[unit vector ratio={7 5}, xmin=63,ymin=38,xmax=135,ymax=138,axis lines=middle,
    axis line style={->},
   standard,
    xlabel={\tiny $\theta$},
    ylabel={\tiny $\gamma$},
    xtick={63.1,94.5,119,126},
    xticklabels={\hspace{2mm}{\tiny $-1/2$}, {\tiny $-1/4$}, {\tiny $-1/18$}, {\tiny $0$}},  
    ytick={42.1,63,84,105,126},    
     yticklabels={{\tiny $-2/3$},{\tiny $-1/2$},{\tiny $-1/3$},{\tiny $-1/6$},{\tiny $0$}},auto]

        \begin{scope}                                                 
  \clip  (94.5,63) -- (105,63) --  (126,42) -- cycle;
\foreach \y in {16, 19,...,80} {\addplot[ultra thick,red] coordinates{(90,\y) (126,30+\y)};}
\foreach \y in {17, 20,...,80} {\addplot[ultra thick,lred] coordinates{(90,\y) (126,30+\y)};}
\foreach \y in {18, 21,...,80} {\addplot[ultra thick,dred] coordinates{(90,\y) (126,30+\y)};}
\end{scope}    

  \begin{scope}    
      \clip (123,120) -- (94.5,63) -- (105,63) --  (123.9,119.7) -- cycle;
\foreach \y in {18, 21,...,120} {\addplot[ultra thick,blue] coordinates{(90,\y) (126,30+\y)};}
\foreach \y in {19, 22,...,120} {\addplot[ultra thick,lblue] coordinates{(90,\y) (126,30+\y)};}
\foreach \y in {20, 23,...,120} {\addplot[ultra thick,midblue] coordinates{(90,\y) (126,30+\y)};}
\end{scope}   

       \begin{scope}    
      \clip (126,119) -- (123.9,119.7) -- (105,63) -- (126,42) -- cycle;
\foreach \y in {11, 14,...,120} {\addplot[ultra thick,midgreen] coordinates{(90,\y) (126,30+\y)};}
\foreach \y in {12, 15,...,120} {\addplot[ultra thick,lgreen] coordinates{(90,\y) (126,30+\y)};}
\foreach \y in {13, 16,...,120} {\addplot[ultra thick,sgreen] coordinates{(90,\y) (126,30+\y)};}

\end{scope}

                   \addplot[name path = Sigma1, line width=2pt, gray] coordinates{(94.5,63) (63,126)};
                \addplot[name path = Sigma2, line width=2pt, gray] coordinates{(94.5,63) (126,42)};
                                \addplot[name path = Sigma2plus, line width=2pt, gray] coordinates{(94.35,63.1) (126,42)};

         \addplot[name path = Local1] coordinates{(126,126) (123.9,119.7) (105,63)};
     \addplot[name path = Local2] coordinates{(105,63) (126,42)};
     
     \addplot[name path = OurR] coordinates{(115.5,126) (117.6,121.8) (123,120) (94.5,63)};

       \addplot[name path = Zhao1] coordinates{(126,126) (123,120) (123.9,119.7)};
              \addplot[name path = Z1] coordinates{(126,126) (123.9,119.7)};
        \addplot[name path = Zhao2] coordinates{(126,126) (123,120) (117.6,121.8) (115.5,126)};
                \addplot[name path = Zhao3] coordinates{(126,126) (126,119) (123.9,119.7)};

             \addplot[name path = NP] coordinates{(105,63) (94.5,63)};

             \addplot[name path = TopL] coordinates{(115.5,126) (63,126)};
                          \addplot[name path = TopR] coordinates{(115.5,126) (126,126)};
                          
                                       \addplot[name path = R, line width=2pt, dgreen] coordinates{(126,126) (126,119)};
                                                                              \addplot[name path = R, line width=2pt, green] coordinates{(126,119) (126,42)};

                                           \addplot[line width=4pt,color = morange,  domain = 119:126]{39};


\addplot[lblue] fill between[of = Sigma1 and OurR];
\addplot[turquoise] fill between[of = TopR and Zhao2];
\addplot[dblue] fill between[of = Zhao1 and Z1];
\addplot[dgreen] fill between[of = Zhao3 and Z1];


\end{axis}

\end{tikzpicture}
}
\caption{The grey lines on the left represent deviations of the order of the standard deviation and so are covered by the central limit theorem~\cite{Koch}.  The blue areas ought to belong to the ``Normal'' regime in which we would expect that $r(N,p,\delta)$ to be of the form $(1+o(1))3\delta_N^2pN/56(1-p)$.  Our results show this in the light blue and turquoise regions, while Bhattacharya, Ganguly, Shao and Zhao~\cite{BGSZ} covered the turquoise and dark blue regions.  The question remains open in the striped regions.  In the red striped region, the ``Poisson" regime, we would expect $r(N,p,\delta)=(1+o(1))\delta_N^{2}\Ex{W_3}/2=(1+o(1))\delta_N^2p^3N^2/8$.  In the green striped region, the ``localisition'' regime, we would expect $r(N,p,\delta)=(1+o(1))2\delta_N^{1/2}\Ex{W_3}^{1/2}\log(1/p)=(1+o(1))\delta_N^{1/2}p^{3/2}N\log(1/p)$.  The known cases in this regime are also due to Bhattacharya, Ganguly, Shao and Zhao~\cite{BGSZ} and are represented by the dark green triangle.  The right hand side of the figure represents large deviations which have been resolved by Harel, Mousset and Samotij~\cite{HMS} as discussed above.  We have not included Warnke's moderate deviation results (which give the value of $r(N,p,\delta)$ up to a multiplicative constant) in the figure, however we have marked the range of $\delta$ covered by his results using an orange line on the $\theta$-axis.}
\end{figure}
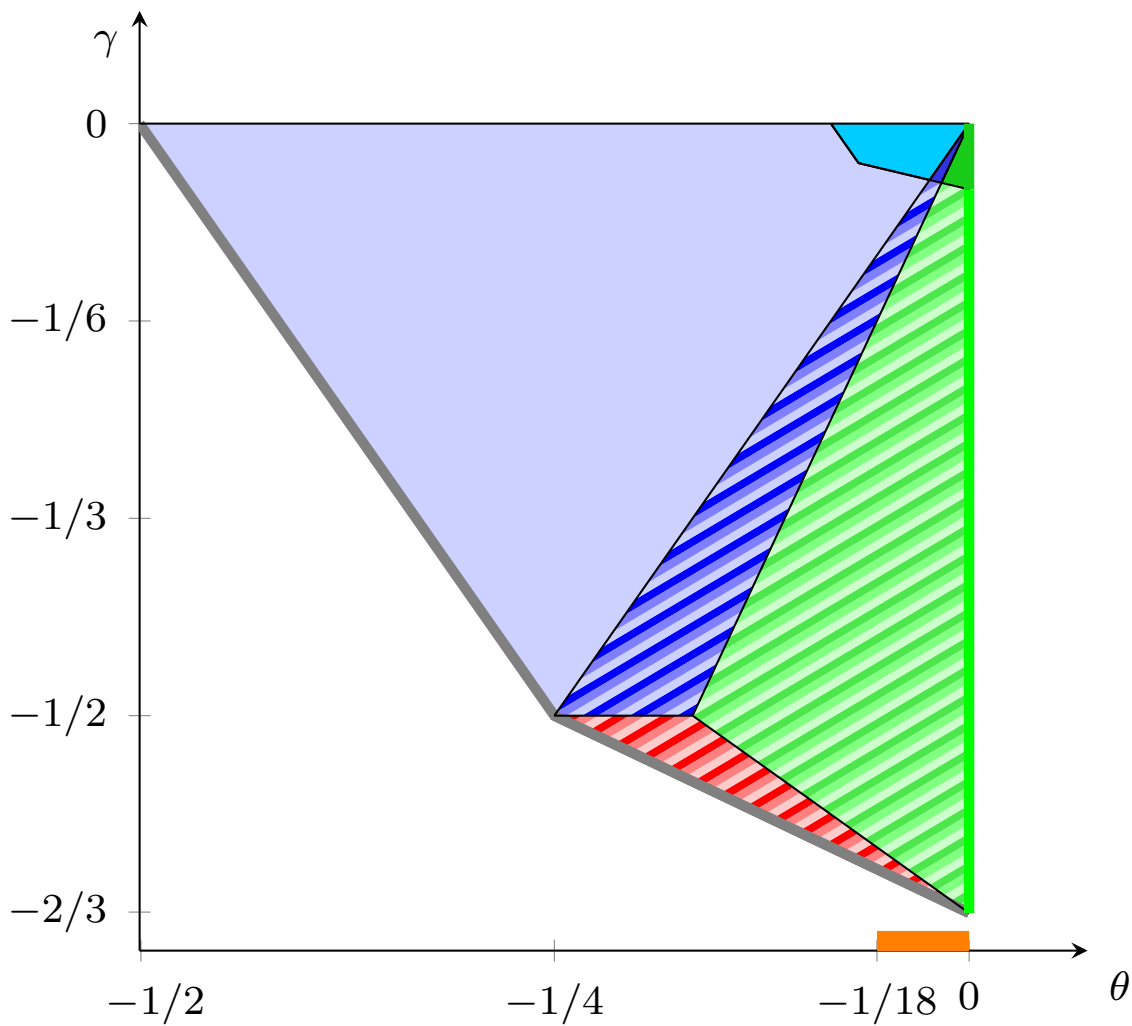

%
%

\textbf{Moderate deviations in the model $B_p$ in general}

We shall prove results analogous to those discussed above in a \emph{much} more general setting.  Given a $k$-uniform hypergraph $\HH$ with vertex set $[N]$ we may define the random variable
\[
X\, :=\, e(\HH[B_p])
\]
which counts the number of edges of a hypergraph $\HH$ in the random subset $B_p\subseteq [N]$.
Many results on deviation probabilities apply in this setting.  In particular, the incredibly useful and versatile inequality of Kim and Vu~\cite{KV2000} is now a fundamental tool of probabilistic combinatorics.  

Many results have focussed on large deviations.  Janson and Ruci\'nski~\cite{JR} determined (under certain conditions) the log probability $\log(\pr{X>(1+\delta)\Ex{X}})$ up to a factor of order $\log(1/p)$.  Warnke~\cite{Wlog} determined (under certain conditions) the log probability up to a constant factor.  And in recent work Bhattacharya and Mukherjee~\cite{BM2019} proved that a framework for studying large deviations, introduced by Chatterjee and Varadhan~\cite{CV2011} (in the context of subgraph counts), may also be used in this setting, and serves as a basis for analysing questions such as symmetry breaking. 

Our results in this setting depend on some basic parameters of the hypergraph.  Let us set 
\[
N^{\HH}(B)\, :=\, e(\HH[B])\, ,\phantom{\Big|}
\]
the number of edges of $\HH$ in the set $B$, and $D^{\HH}(B_p)=N^{\HH}(B_p)-p^ke(\HH)$, the deviation of $N^{\HH}(B_p)$ from its mean.  For a vertex $i\in [N]$ we write $d_{\HH}(i)$ for the degree of $i$ in $\HH$ so that
\[
\bar{d}(\HH)\, :=\, \frac{1}{N}\sum_{x=1}^{N} d_{\HH}(x)\, =\, k e(\HH)/N
\]
and
\[
\sigma^2(\HH)\, =\, \frac{1}{N}\sum_{x=1}^{N} (d_{\HH}(x)\, -\, \bar{d}(\HH))^2
\]
are the average degree and degree variance respectively.

We may extend the definition of degree to sets, so that $d_{\HH}(R)$ is the number of edges containing the set $R$.  We then define $\Delta_r(\HH)$ to be the maximum $r$-degree of $\HH$, i.e., 
\[
\Delta_r(\HH)\, :=\, \max\{d_{\HH}(R)\, :\, |R|=r\}\, .
\]
Our conditions on $\HH$ will simply be that, for some $r\ge 2$, we have $\Delta_r(\HH)=O(1)$  and $\sigma^2(\HH)=\Omega(N^{2r-2})$.  In particular this asks that the standard deviation of degrees be of the same order $N^{r-1}$ as the maximum degree (as $\Delta_1(\HH)\le N^{r-1}\Delta_r(\HH)=O(N^{r-1})$).

We may now state our main result in this setting.  In fact it makes no difference to the proof to state the result for weighted hypergraphs in which a positive weight is associated to each edge.  Naturally, all parameters count edges with weights.  For example $e(\HH)$ then refers to the sum of all weights and a degree $d_{\HH}(x)$ refers to the sum of weights of edges containing $x$.

\begin{theorem}\label{thm:pworld} 
Let $2\le r\le k$ and $C$ be integers.  Let $\HH_N$ be a sequence of (weighted) $k$-uniform hypergraphs with $V(\HH_N) = [N]$, $\Delta_r(\HH_N)\le C$ and $\sigma^2(\HH_N)\ge N^{2r-2}/C$ for all $N$.  Given a sequence $p=p_N$ which may be a constant in $(0,1/2)$ or converge to $0$, let $\delta_N$ be a sequence such that
\[
\sqrt{\frac{\log{N}}{pN}}\, \ll\, \delta_N\, \ll\, p^{(k-r)/2(r-1)}\, .
\]
Then
\[
\pr{ D^{\HH_N}(B_p)\, \ge\, \delta_N p^k e(\HH_N)} \,=\,  \exp\left(\frac{-(1+o(1))\delta_N^2 pe(\HH_N)^2}{2(1-p)\big(\bar{d}(\HH_N)^2+\sigma^2(\HH_N)\big)N}\right)\, .
\]
Furthermore the same result holds for the lower tail probability $\pr{ D^{\HH_N}(B_p)\, \le\, -\delta_N p^k e(\HH_N)}$.
\end{theorem}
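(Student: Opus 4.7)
The natural approach is to deduce Theorem \ref{thm:pworld} from the corresponding deviation estimate for the uniform random $m$-subset $B_m$ (the paper's main result in the fixed-size setting) by conditioning on $|B_p|$. The key observation is that the denominator $\bar{d}(\HH_N)^2+\sigma^2(\HH_N)$ in the stated exponent splits in exactly the way one would expect from a Gaussian convolution: the term $\bar{d}(\HH_N)^2$ will arise from the binomial fluctuation of $|B_p|$, while the term $\sigma^2(\HH_N)$ will come from the intrinsic $m$-world fluctuation of $N^{\HH_N}(B_m)$ for $B_m$ of fixed size.

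Set $\mu_p := p^k e(\HH_N)$, let $\mu_m := \Ex{N^{\HH_N}(B_m)}$, and write $s := m - pN$. Since $|B_p|\sim\Bin(N,p)$ and $B_p\mid\{|B_p|=m\}$ is uniform on $\binom{[N]}{m}$,
\[
\pr{D^{\HH_N}(B_p)\ge\delta_N\mu_p} = \sum_{m}\pr{|B_p|=m}\,\pr{N^{\HH_N}(B_m)-\mu_m\ge\delta_N\mu_p-(\mu_m-\mu_p)}.
\]
I would first restrict the sum to the window $|s|\le L\sqrt{p(1-p)N}$ with $L=L_N\to\infty$ slowly (e.g.\ $L=(\log N)^{1/3}$); a Chernoff bound on $|B_p|$ combined with the hypothesis $\delta_N\gg\sqrt{\log N/(pN)}$ shows that $m$ outside this window contributes negligibly compared with the target. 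Inside the window the Taylor expansion $\mu_m-\mu_p = sp^{k-1}\bar{d}(\HH_N)(1+o(1))$ is accurate, the local de Moivre--Laplace formula gives $\pr{|B_p|=m}\sim(2\pi p(1-p)N)^{-1/2}\exp(-s^2/(2p(1-p)N))$, and the $m$-world deviation theorem—applicable here because $\delta_N\ll p^{(k-r)/2(r-1)}$ forces the effective $m$-world deviation $u(s)/\mu_m$ to remain in the Normal regime uniformly in $s$—yields, with $u(s):=\delta_N\mu_p - sp^{k-1}\bar{d}(\HH_N)$ and $V_m\sim p^{2k-1}(1-p)N\sigma^2(\HH_N)$ (the contribution from $|e\cap f|=1$ pairs),
\[
\pr{N^{\HH_N}(B_m)-\mu_m\ge u(s)} = \exp\!\left(-\frac{(1+o(1))\,u(s)_+^2}{2V_m}\right).
\]

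The remaining work is a Gaussian saddle-point evaluation: completing the square in $s$, the two quadratic forms combine to give a global minimum of $\delta_N^2\mu_p^2/\big(2p^{2k-1}(1-p)N(\bar{d}(\HH_N)^2+\sigma^2(\HH_N))\big)$ attained at $s^* = \delta_N p e(\HH_N)\bar{d}(\HH_N)/(\bar{d}(\HH_N)^2+\sigma^2(\HH_N))$, which simplifies to the claimed exponent on substituting $\mu_p=p^k e(\HH_N)$. The upper bound follows by summing the window estimate (any polynomial-in-$L$ prefactor is absorbed into $(1+o(1))$), while the matching lower bound comes from restricting to a narrow neighbourhood of $s^*$ where both factors are simultaneously near-maximal. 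I expect the main obstacle to be ensuring that the $(1+o(1))$ error in the $m$-world estimate is uniform in $m$ across the window and that the linearisations of $\mu_m$ and of the binomial density are accurate to an error that is negligible on the $\delta_N^2$ scale; for values of $s$ so large that $u(s)\le 0$ one uses $\pr{N^{\HH_N}(B_m)-\mu_m\ge u(s)}\le 1$ together with the direct binomial tail bound on $|B_p|$ to obtain the required estimate. The lower tail is handled in the same way using the symmetric lower-tail statement of the $m$-world theorem.
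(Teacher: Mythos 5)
Your high-level approach is the same as the paper's: condition on $|B_p|=m$, write the $p$-world tail as $\sum_m \pr{|B_p|=m}\,\pr{N^{\HH_N}(B_m)\ge(1+\delta_N)p^k e(\HH_N)}$, invoke Theorem~\ref{thm:main} for each $m$, and optimise. Your identification of the saddle $s^* = \delta_N p\,e(\HH_N)\bar{d}/(\bar{d}^2+\sigma^2)$ and of the resulting exponent are also correct and match the paper (where the saddle appears as $m_{\eta^*}$ with $\eta^*=\bar d^2/(\bar d^2+\sigma^2)$).

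There is, however, a genuine gap in the truncation step. You restrict the sum to the window $|s|\le L\sqrt{p(1-p)N}$ with $L=(\log N)^{1/3}$, but the dominant $m$ lies far outside any such window. Indeed $s^*=\Theta(\delta_N pN)$, so in units of the binomial standard deviation $s^*/\sqrt{pN}=\Theta(\delta_N\sqrt{pN})$, and the hypothesis $\delta_N\gg\sqrt{\log N/(pN)}$ gives $\delta_N\sqrt{pN}\gg\sqrt{\log N}\gg(\log N)^{1/3}$. Thus the saddle point, and with it essentially all of the probability mass, is excluded from your window. Worse, the justification by Chernoff also fails: $\pr{\big||B_p|-pN\big|>L\sqrt{pN}}$ is of order $\exp(-L^2/2)=\exp(-\tfrac12(\log N)^{2/3})$, which is \emph{larger} than the target $\exp(-\Theta(\delta_N^2 pN))$ since $\delta_N^2 pN\gg\log N$. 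So ``outside the window contributes negligibly'' is false as stated. One further wrinkle: at the scale $s\sim\delta_N pN$ the deviation of $|B_p|$ is well outside the local CLT regime, so plain de Moivre--Laplace does not apply; one needs a moderate-deviation estimate for $\pr{\Bin(N,p)=m}$ valid for $\sqrt{pN}\ll m-pN\ll pN$ (this is exactly the estimate~\eqref{eq:ourbin} used in the paper), accepting an $\exp(O(\log N))$ multiplicative error rather than an exact prefactor.

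The fix is straightforward and brings you precisely to the paper's argument: rather than truncating, note there are only $N+1$ values of $m$, so $\sum_m f(m)\le (N+1)\max_m f(m)$, and the factor $N+1$ is absorbed into $\exp(O(\log N))$, which is in turn absorbed into the $(1+o(1))$ in the exponent because $\delta_N^2 pN\gg\log N$. One then bounds $\max_m f(m)$ by case analysis over $m\le pN$, $pN\le m\le m^\circ$, and $m\ge m^\circ$ (where $m^\circ$ is chosen so that $b_{N,p}(m^\circ)$ alone already has the target exponent), and within the middle range optimises exactly the quadratic form $\sigma^2\eta^2+\bar d^2(1-\eta)^2$ that you write down. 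If you instead want a genuine truncation, the window needs to be of width $K\delta_N pN$ for a large constant $K$, not a slowly growing multiple of $\sqrt{pN}$.
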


In fact we shall work almost exclusively in another model $B_m$, a uniformly random $m$-element subset of $[N]$.  Theorem~\ref{thm:pworld} will be a consequence of a similar theorem for the $B_m$ model, see Theorem~\ref{thm:main}, together with standard results about the tail of the binomial distribution.

\textbf{Moderate deviations in the model $B_m$ in general}

We now consider the uniform model $B\sim B_m$, where $B_m$ is a uniformly random $m$-element subset of $[N]$.  It has been less common to work with this model, however we remark that Warnke~\cite{Wlog} shows that his results also hold in the $B_m$ model.  Furthermore, moderate deviations were studied recently in the $B_m$ model for hypergraphs which are regular or close to regular~\cite{FGSS}.

We define
\[
L^{\HH}(m)\, :=\, \Ex{N^{\HH}(B_m)}\, ,
\]
the expected value of $N^{\HH}(B_m)$ in the model $B\sim B_m$.  And set
 \[
 D^{\HH}(B_m)\, :=\, N^{\HH}(B_m)\, -\, L^{\HH}(m)\, 
 \]
to be the deviation of $N^{\HH}(B_m)$ from its mean.

We set $t:=m/N$ to be the density of the random set.  The result makes sense with $t\in (0,1/2]$ a constant or a sequence $t=t_N\in (0,1/2]$.   

\begin{theorem}\label{thm:main}
Let $2\le r\le k$ and $C$ be integers.  Let $\HH_N$ be a sequence of (weighted) $k$-uniform hypergraphs with $V(\HH_N) = [N]$, $\Delta_r(\HH_N)\le C$ and $\sigma^2(\HH_N)\ge N^{2r-2}/C$ for all $N$.  Let $m/N =t \le 1/2$.  Let $a_N$ be a sequence such that
\[
t^{k-1/2}N^{r-1/2}(\log{N})^{1/2}\, \ll\, a_N\, \ll\, t^{k-1/2+(k-1)/2(r-1)}N^r \, .
\]
Then
\[
\pr{D^{\HH_N}(B_m)\,  \ge\, a_N}\, = \, \exp \left(  \frac{-(1+o(1))a_N^2}{2(1-t)t^{2k-1}\sigma^2(\HH_N) N} \right)\, .
\]
Furthermore the same holds for the lower tail probability $\pr{D^{\HH_N}(B_m)\,  \le\, -a_N}$.
\end{theorem}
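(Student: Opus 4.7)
The plan is to reveal the elements of $B_m$ one at a time in uniformly random order, and decompose $\dev{}{\HH}{m}$ as a sum of martingale differences. Writing $b_1, \ldots, b_m$ for the elements in this order, set $X_i$ to be the change in $N^{\HH}(\{b_1, \dots, b_j\}) - L^{\HH}(j)$ as $j$ goes from $i-1$ to $i$. Then $\dev{}{\HH}{m} = \sum_{i=1}^{m} X_i$ is a martingale with respect to the natural filtration $(\F_i)$, and the task reduces to proving a sharp moderate-deviation estimate for this sum.

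The first step is to approximate each increment $X_i$ by a simpler random variable $Y_i$ isolating its dominant linear contribution. Since $\Delta_r(\HH_N) = O(1)$ with $r \ge 2$, the main part of $X_i$ comes from edges of $\HH_N$ that contain $b_i$ together with $k-1$ elements already in $\rset{i-1}$; up to a deterministic centring, this contribution behaves like $t^{k-1}(d_{\HH}(b_i) - \bar{d}(\HH_N))$. Aggregating over $i$ and using that $B_m$ is a uniform sample, one should obtain
\[
\sum_{i=1}^{m} \Var(Y_i \mid \F_{i-1}) \; = \; (1 + o(1))(1-t) t^{2k-1} \sigma^2(\HH_N) N,
\]
which identifies the variance appearing in the theorem. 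The factor $(1-t)$ is hypergeometric in origin, while $\sigma^2(\HH_N)$ captures the degree irregularity that drives the main fluctuation.

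For the upper tail I would bound the moment generating function $\Ex{\exp(\lambda \dev{}{\HH}{m})}$ by iterating conditional estimates of the form $\Ex{\exp(\lambda X_i) \mid \F_{i-1}} \le \exp(\lambda^2 v_i / 2 + O(\lambda^3 w_i))$, where $v_i \approx \Var(X_i \mid \F_{i-1})$ and $w_i$ is controlled via the $r$-degree hypothesis. Markov's inequality with the optimal choice $\lambda = a_N / ((1-t) t^{2k-1} \sigma^2(\HH_N) N)$ then yields the upper bound. The hypothesis $a_N \ll t^{k-1/2 + (k-1)/2(r-1)} N^{r}$ is precisely what makes the cubic correction negligible compared to the quadratic main term, while $a_N \gg t^{k-1/2} N^{r-1/2} (\log N)^{1/2}$ ensures that the resulting Gaussian exponent swamps the polylogarithmic errors coming from the approximation $X_i \approx Y_i$ and from the concentration of $\sum v_i$ around its mean.

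The matching lower bound on the probability is obtained via a change-of-measure (tilting) argument: pass to a tilted law under which $\Ex{\dev{}{\HH}{m}} = a_N$, then show by a second moment or local CLT argument that the tilted probability of landing near $a_N$ is at least polynomially small, which is negligible against the target $\exp(-\Theta(a_N^2/((1-t)t^{2k-1}\sigma^2(\HH_N) N)))$. The main obstacle throughout will be controlling the Gaussian approximation uniformly in the irregular-degree setting: since vertex degrees can vary by order $N^{r-1}$, a single increment $X_i$ can be as large as $t^{k-1}N^{r-1}$, substantially larger than its typical scale. Carefully separating $X_i$ into its linear-in-degree main term and genuine higher-order corrections, and bounding the latter using $\Delta_r(\HH_N) = O(1)$, will be the technical heart of the argument; once this decomposition is in place, the lower tail follows by the symmetric estimate applied to $-X_i$.
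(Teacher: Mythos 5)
Your high-level plan matches the paper's architecture: decompose $D^{\HH_N}(B_m)$ as a martingale with respect to the element-by-element revelation filtration, isolate a ``degree-driven'' main term whose quadratic variation is $(1+o(1))(1-t)t^{2k-1}\sigma^2(\HH_N)N$, and prove matching upper and lower bounds on the deviation probability for that main term. Your MGF iteration is essentially Freedman's inequality, which the paper also uses. The lower bound differs genuinely: you propose exponential tilting plus a local-CLT argument, whereas the paper invokes the converse of Freedman's inequality (Lemma~\ref{lem:Freedman_converse}); both are defensible routes, though tilting for a non-i.i.d.\ martingale requires controlling the tilted increments, which is not obviously easier than the stopping-time bookkeeping in the converse Freedman argument.

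The serious gap is in the approximation step, and it is not merely a matter of ``care.'' You write that the higher-order corrections can be bounded ``using $\Delta_r(\HH_N)=O(1)$,'' but this condition only yields a deterministic bound on $X_i - Y_i$ of the same order $t^{k-1}N^{r-1}$ as $Y_i$ itself, which gives nothing. To show the correction is negligible one needs a high-probability statement, and that is where the paper spends most of its effort. The paper's martingale representation (Lemma~\ref{lem:Mart}) decomposes $D^{\HH}(B_m)$ as $\sum_{i,\ell}$ of terms $X_\ell^{\HH}(B_i)$; the main term corresponds to $\ell=1$, and for $\ell\ge 2$ the residual $Y_\ell(B_i) := X_\ell(B_i) - \binom{k-1}{\ell-1}\frac{(i-1)_{\ell-1}}{(N-1)_{\ell-1}}X_1(B_i)$ is shown to equal exactly $D^{\HH(x)_{\ell-1}}(B_{i-1})$, a deviation of an edge count in a derived link hypergraph (Proposition~\ref{prop:Ysmall}). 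Bounding these uniformly over $x$, $i$, $\ell$ requires a large-deviation estimate (Proposition~\ref{prop:known}, proved via Janson's inequality for the lower tail and a Janson--Ruci\'nski-style moment bound, Lemma~\ref{lem:moment}, for the upper tail) applied to the hypergraphs $\HH(x)_j$ (Corollary~\ref{cor:link}). Your proposal has no analogue of this machinery and identifies no mechanism by which the error terms would concentrate. Without it the argument does not close.

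Two smaller issues. First, your simplified increment $Y_i\approx t^{k-1}\big(d_{\HH}(b_i)-\bar d(\HH_N)\big)$ is centred at the global mean rather than the conditional mean, so $(Y_i)$ is not a martingale-difference sequence; and the constant coefficient $t^{k-1}$ does not account for the hypergeometric depletion. The paper instead uses the time-dependent coefficient $t^{k-1}(1-t)/(1-s)$ (with $s=i/N$) applied to the conditionally centred increment $X_1^{\HH}(B_i)=d_{\HH}(b_i)-\Ex{d_{\HH}(b_i)\mid B_{i-1}}$, which keeps the process an honest martingale and still integrates out to the $(1-t)$ factor via $\int_0^t (1-s)^{-2}\,ds$. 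Second, the computation that the conditional second moment $\Ex{X_1^{\HH}(B_i)^2\mid B_{i-1}}$ concentrates near $\sigma^2(\HH_N)$ (Lemma~\ref{lem:var}) is itself another large-deviation estimate for derived one-uniform weighted hypergraphs, again powered by Proposition~\ref{prop:known}; it does not come for free from ``aggregating over $i$.''
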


\begin{remark} It is natural to ask whether the interval of deviations is best possible.  The lower bound is necessary, except perhaps for the $(\log{N})^{1/2}$, as $t^{k-1/2}N^{r-1/2}$ is the order of the standard deviation.  The upper bound is essentially best possible in the case $r=k$, but not for $r<k$.  In particular, we believe that it may be extended by an extra poly log factor by making changes to Section~\ref{sec:known}, for example, using Warnke's inequality~\cite{Wlog} instead of using a moment bound similar to that of Janson and Ruci\'nski~\cite{JR}.  However, it may well be possible to extend the interval even further, perhaps as far as $t^{r(2k-1)/(2r-1)}N^r$, which would be best possible (up to polylog terms).
\end{remark}

\textbf{A discussion of the proof of Theorem~\ref{thm:main}}

It is clear that the variance of the degrees plays a central role.  In particular, the more variance the more likely deviations become.  This is because deviations are driven by the degrees of the vertices selected.  It may seem counterintuitive that deviations of the random variable $D^{\HH}(B_m)$ which in principle could depend on many properties of the hypergraph $\HH$ in the end depend almost entirely on the degree distribution of $\HH$.  However we will make this assertion concrete by showing that $D^{\HH}(B_m)$ is generally very well approximated by a process which considers only the degrees.  

Let us now define this ``degree'' process.  We may suppose that $B_m$ is generated as $B_m=\{b_1,\dots ,b_m\}$ where $b_1,\dots ,b_N$ is a uniformly random permutation of the elements of $[N]$.  We set $t:=m/N$ and $s:=i/N$.  Let us define
\[
\Lambda^{\HH}(B_m)\, :=\, \sum _{i=1}^{m} \frac{t^{k-1}(1-t)}{1-s}\, X^{\HH}_1(B_i)
\]
where $X^{\HH}_1(B_i)$ is defined to be the deviation of the degree of the $i$ th element $b_i$ from its conditional expectation, i.e., 
\[
X^{\HH}_1(B_i)\, :=\, d_{\HH}(b_i)\, -\, \Ex{d_{\HH}(b_i)|B_{i-1}} \, .
\]

The proof of Theorem~\ref{thm:main} now partitions naturally into two tasks.  First, we must show that $D^{\HH}(B_m)$ is generally very close to $\Lambda^{\HH}(B_m)$.  This is achieved by considering the full martingale representation for $D^{\HH}(B_m)$ given in Section~\ref{sec:Mart} (and previously in~\cite{FGSS}) and showing that all the terms $X^{\HH}_{\ell}(B_i)$ which occur are predictable in terms of $X^{\HH}_1(B_i)$.  It is in proving this approximation that we make use of previous large deviation results, in particular we use/adapt the result of Janson and Ruci\'nski~\cite{JR}.

The second task is to prove a deviation result for $\Lambda^{\HH}(B_m)$ (Proposition~\ref{prop:main}).  We do so using Freedman's inequalities for martingale deviations.  This turns out to be relatively straightforward.  We simply need to control the contributions $\Ex{X^{\HH}_1(B_i)^2|B_{i-1}}$ to the quadratic variation (see Section~\ref{sec:var}).

\textbf{Returning to arithmetic progressions}

Let us now state more formally our result for $3$ term arithmetic progressions.  

Let $\HH^{3}$ be the $3$-uniform hypergraph with vertex set $[N]$ and edges corresponding to $3$-APs in $[N]$. It follows from simple calculations (see~\cite{SeccoThesis}) that
\begin{align*}
\bar{d}(\HH^{3}) &= (1+o(1))\frac{3N}{4} \, , \\ 
\sigma^2(\HH^{3}) &= (1+o(1))\frac{N^2}{48} \, , \\
 e(\HH^{3}) &= (1+o(1))\frac{N^2}{4}.
\end{align*}
It is also easy to verify that $\Delta_2(\HH^{3}) = O(1)$ and so we can apply Theorem~\ref{thm:pworld} with $r = 2$ to obtain the following result.

\begin{corollary}\label{cor:3APnonreg}
Given a sequence $p = p_N$ which may be a constant in $(0,1/2)$ or converge to $0$, let $\delta_N$ be a sequence satisfying
\[
\sqrt{\frac{\log N}{pN}}\, \ll\, \delta_N\, \ll\, p^{1/2}.
\]
Then
\begin{equation}\label{eq:3APnonregp}
\pr{ D^{\HH^3}(B_p)\, \ge\, \delta_N p^3N^2/4}\, = \, \exp \left( \frac{-(3+o(1))\delta_N^2pN}{56(1-p)} \right).
\end{equation}
\end{corollary}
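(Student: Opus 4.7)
The plan is to apply Theorem~\ref{thm:pworld} directly to $\HH^3$ with the parameters $k=3$ and $r=2$, so the strategy reduces to verifying the hypotheses and performing a short arithmetic computation. First I would check the structural assumptions on $\HH^3$. The bound $\Delta_2(\HH^3) = O(1)$ is easy: any two fixed elements $a,b \in [N]$ can occupy the three ordered positions of a $3$-AP in only a bounded number of ways, giving a constant upper bound on the $2$-degree. Combining this with the given estimate $\sigma^2(\HH^3) = (1+o(1))N^2/48 \geq N^2/C$ for some suitable constant $C$, the structural hypotheses of Theorem~\ref{thm:pworld} hold. The hypothesis on $\delta_N$ from the theorem is $\sqrt{\log N/(pN)} \ll \delta_N \ll p^{(k-r)/(2(r-1))} = p^{1/2}$, which matches the corollary verbatim.

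Next, I would reconcile the thresholds. Theorem~\ref{thm:pworld} controls $\pr{D^{\HH^3}(B_p) \ge \delta_N p^k e(\HH^3)}$, and since $e(\HH^3) = (1+o(1))N^2/4$, this differs from the threshold $\delta_N p^3 N^2/4$ in the statement by only a $(1+o(1))$ factor. This mismatch is harmless: replacing $\delta_N$ by $\delta_N \cdot (1+o(1))$ leaves the valid range $\sqrt{\log N/(pN)} \ll \delta_N \ll p^{1/2}$ unchanged and alters the final rate by a further $(1+o(1))$ factor absorbed into the $(3+o(1))$ constant.

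It then remains to evaluate the exponent
\[
\frac{-(1+o(1))\,\delta_N^2\, p\, e(\HH^3)^2}{2(1-p)\bigl(\bar d(\HH^3)^2 + \sigma^2(\HH^3)\bigr)N}.
\]
Plugging in $e(\HH^3)^2 = (1+o(1))N^4/16$, $\bar d(\HH^3)^2 = (1+o(1))9N^2/16 = (1+o(1))\,27N^2/48$, and $\sigma^2(\HH^3) = (1+o(1))N^2/48$, the sum $\bar d^2 + \sigma^2$ simplifies to $(1+o(1))\,28N^2/48 = (1+o(1))\,7N^2/12$. The resulting ratio is
\[
\frac{\delta_N^2\, p\, N^4/16}{2(1-p)\cdot (7N^2/12)\cdot N} \;=\; \frac{3\,\delta_N^2\, p N}{56(1-p)},
\]
yielding the claimed constant $3/56$.

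Since this is purely a plug-in argument, there is no substantive obstacle; the only point requiring any care is the absorption of the $(1+o(1))$ discrepancy between $p^k e(\HH^3)$ and $p^3 N^2/4$ in the threshold, handled as above by a cosmetic perturbation of $\delta_N$.
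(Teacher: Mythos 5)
Your proposal is correct and follows essentially the same route the paper takes: cite the asymptotics for $\bar d(\HH^3)$, $\sigma^2(\HH^3)$, $e(\HH^3)$, note $\Delta_2(\HH^3)=O(1)$, and plug into Theorem~\ref{thm:pworld} with $k=3$, $r=2$. Your arithmetic giving $\bar d^2+\sigma^2 = (1+o(1))\,7N^2/12$ and the resulting constant $3/56$ is right, and your handling of the harmless $(1+o(1))$ discrepancy between $p^3 e(\HH^3)$ and $p^3N^2/4$ is the obvious one-line fix the paper leaves implicit.
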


Results for longer arithmetic progressions and another example (additive quadruples) are given in Section~\ref{sec:appl}.

\textbf{Central limit theorems}

Let us mention that our results also give central limit theorems for $D^{\HH}(B_m)$ and $D^{\HH}(B_p)$.  We remark that the central limit theorem (and even a bivariate version) is already known for the special case of arithmetic progressions~\cite{Koch}.  However, we are not aware of a central limit theorem in the more general context of random induced subhypergraphs.  We state first our result for $D^{\HH}(B_m)$.

\begin{theorem}\label{thm:clt} 
Let $2\le r\le k$ and $C$ be integers.  Let $\HH_N$ be a sequence of (weighted) $k$-uniform hypergraphs with $V(\HH_N) = [N]$, $\Delta_r(\HH_N)\le C$ and $\sigma^2(\HH_N)\ge N^{2r-2}/C$ for all $N$.  Suppose that $t=m/N$ satisfies $(\log{N}/N)^{(r-1)/(k-1)}\ll t\, \le 1/2$.
Then
\[
\frac{D^{\HH_N}(B_m)}{(1-t)^{1/2}t^{k-1/2}\sigma(\HH_N) N^{1/2}} \, \longrightarrow \, N(0,1)
\]
in distribution.
\end{theorem}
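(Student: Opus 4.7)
The plan is to deduce Theorem~\ref{thm:clt} from a martingale central limit theorem applied to the degree process
\[
\Lambda^{\HH_N}(B_m) \, =\, \sum_{i=1}^{m}\xi_i, \qquad \xi_i\, :=\, \frac{t^{k-1}(1-t)}{1-i/N}\,X_1^{\HH_N}(B_i),
\]
introduced in the discussion of the proof of Theorem~\ref{thm:main}, combined with the approximation $D^{\HH_N}(B_m) \approx \Lambda^{\HH_N}(B_m)$ that is the technical heart of that proof. Specifically, the first step is to use the full martingale representation for $D^{\HH_N}(B_m)$ from Section~\ref{sec:Mart} and the control of the higher-order increments $X_\ell^{\HH_N}(B_i)$, $\ell\ge 2$, to show
\[
D^{\HH_N}(B_m) - \Lambda^{\HH_N}(B_m) \, =\, o_P\!\left((1-t)^{1/2}t^{k-1/2}\sigma(\HH_N)N^{1/2}\right).
\]
By Slutsky's theorem this reduces the problem to a CLT for $\Lambda^{\HH_N}(B_m)$. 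The lower bound $t \gg (\log N/N)^{(r-1)/(k-1)}$ in the hypothesis is precisely what is needed to push this approximation down to the standard-deviation scale.

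The $\xi_i$ are martingale differences for $\F_i := \sigma(b_1,\dots,b_i)$, since $\Ex{X_1^{\HH_N}(B_i)\mid \F_{i-1}}=0$ by construction. To apply a standard martingale CLT (for instance Theorem~3.2 of Hall and Heyde) two ingredients are needed. First, the conditional quadratic variation must converge: using the Section~\ref{sec:var} computation, $\Ex{X_1^{\HH_N}(B_i)^2\mid \F_{i-1}}$ is, up to lower-order terms, the variance of a uniform sample without replacement from the degrees of $[N]\setminus B_{i-1}$, and hence close to $\sigma^2(\HH_N)$ throughout $i\le m$. A Riemann-sum calculation then gives
\[
\sum_{i=1}^{m}\Ex{\xi_i^2 \,|\, \F_{i-1}} \, =\, (1+o_P(1))\,(1-t)\,t^{2k-1}\,\sigma^2(\HH_N)\,N,
\]
matching the square of the target normalising constant. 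Second, the Lindeberg condition follows from the deterministic bound $|\xi_i|=O(t^{k-1}N^{r-1})$ (using $|X_1^{\HH_N}(B_i)|\le 2\Delta_1(\HH_N)=O(N^{r-1})$ and $1-i/N\ge 1/2$); this is $O((tN)^{-1/2})$ times the target standard deviation $\Theta(t^{k-1/2}N^{r-1/2})$, which tends to $0$ since $tN\to\infty$ under the hypothesis on $t$. Here we also use $\sigma(\HH_N)=\Theta(N^{r-1})$, which holds because $\Delta_1(\HH_N)=O(N^{r-1})$ forces $\sigma(\HH_N)\le\Delta_1(\HH_N)=O(N^{r-1})$, while $\sigma^2(\HH_N)\ge N^{2r-2}/C$ gives the matching lower bound.

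The main obstacle is the first step, namely the $D\approx\Lambda$ approximation at the \emph{standard-deviation} scale rather than the moderate-deviation scale used in Theorem~\ref{thm:main}. The argument will mirror the one developed for Theorem~\ref{thm:main}, bounding the contribution of the higher-order martingale increments via (an adaptation of) the Janson--Ruci\'nski moment estimate recalled in Section~\ref{sec:known}, but the tolerance for error is tighter. Once this is in place, the martingale CLT yields $\Lambda^{\HH_N}(B_m)/((1-t)^{1/2}t^{k-1/2}\sigma(\HH_N)N^{1/2})\to N(0,1)$ in distribution, and Slutsky's theorem completes the proof.
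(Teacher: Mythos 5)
Your proposal is correct and follows essentially the same route as the paper's proof: a martingale CLT for $\Lambda^{\HH_N}(B_m)$ with the quadratic variation controlled by Proposition~\ref{prop:var}, followed by Slutsky's theorem and the approximation $D^{\HH_N}(B_m)\approx\Lambda^{\HH_N}(B_m)$. The one ``main obstacle'' you flag is in fact already closed by Proposition~\ref{prop:approx} as stated, with no further adaptation of the moment arguments needed: taking $\alpha_N=\eps\, t^{k-1/2}N^{r-1/2}$ there (legitimate because $t\gg (\log N/N)^{(r-1)/(k-1)}$ forces $t^{k-1/2}N^{r-1/2}\ll t^{k-1/2+(k-1)/2(r-1)}N^r$) gives $\pr{|D^{\HH_N}(B_m)-\Lambda^{\HH_N}(B_m)|\ge\eps\sigma_1}\le\exp(-\omega(1)\eps^2)\to 0$ for each fixed $\eps>0$.
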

 
Since it is not the focus of the article let us mention now how one may prove the theorem using results of this paper.  Classic papers on the Martingale central limit theorem include~\cite{Brown},~\cite{HBrown} and~\cite{Dvoretzky}.  Although it is also worth consulting more recent results such as~\cite{Mourrat}, and the references therein.
  
\begin{proof}[Proof of Theorem~\ref{thm:clt}] Let $\sigma_1^2=(1-p)p^{2k-1}\sigma^2(\HH_N)N$.  One may easily prove that $\Lambda^{\HH_N}(B_m)/\sigma_1$ converges in distribution to a standard Gaussian using some version of the martingale central limit theorem together with Proposition~\ref{prop:var} to control the quadratic variation of the process.  The result then follows immediately from the fact that the difference between $D^{\HH}(B_m)/\sigma_1$ and $\Lambda^{\HH}(B_m)/\sigma_1$ converges to $0$ in probability by Proposition~\ref{prop:approx}.
\end{proof}

One may then easily deduce the corresponding result for the binomial $B_p$ model.

\begin{theorem}\label{thm:pworldclt} 
Let $2\le r\le k$ and $C$ be integers.  Let $\HH_N$ be a sequence of (weighted) $k$-uniform hypergraphs with $V(\HH_N) = [N]$, $\Delta_r(\HH_N)\le C$ and $\sigma^2(\HH_N)\ge N^{2r-2}/C$ for all $N$.  Then for every sequence $p=p_N$ which may be a constant in $(0,1/2)$ or converge to $0$ while satisfying $p\gg (\log{N}/N)^{(r-1)/(k-1)}$, we have 
\[
\frac{D^{\HH_N}(B_p)}{(1-p)^{1/2}p^{k-1/2}\big(\bar{d}(\HH_N)^2+\sigma^2(\HH_N)\big)^{1/2}N^{1/2}}\, \longrightarrow \, N(0,1)
\]
in distribution.
\end{theorem}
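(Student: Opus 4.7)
My approach is to derive the binomial-model CLT from Theorem~\ref{thm:clt} by conditioning on the (random) size $M := |B_p| \sim \Bin(N,p)$. Given $M=m$, the set $B_p$ is distributed as a uniform $m$-element subset $B_m$, and so
\[
D^{\HH_N}(B_p)\, =\, D^{\HH_N}(B_M)\, +\, \big(L^{\HH_N}(M)\, -\, p^k e(\HH_N)\big).
\]
The plan is to show that (i) the first summand is asymptotically $N(0,(1-\tau^2)\sigma_F^2)$ conditional on $M$ via Theorem~\ref{thm:clt}, (ii) the second is an affine function of the asymptotically Gaussian $M-pN$ and thus $N(0,\tau^2\sigma_F^2)$, and (iii) the two summands are conditionally independent once we condition on $M$, so the two Gaussian contributions combine to give exactly the variance $\sigma_F^2 := (1-p)p^{2k-1}\big(\bar d(\HH_N)^2+\sigma^2(\HH_N)\big)N$ stated in the theorem, with $\tau^2 := \bar d(\HH_N)^2/\big(\bar d(\HH_N)^2+\sigma^2(\HH_N)\big)$.

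To handle the deterministic shift, I would Taylor expand the function $m\mapsto L^{\HH_N}(m)$ around $m=pN$. Since each edge $e\in E(\HH_N)$ contributes $\binom{N-k}{m-k}/\binom{N}{m}$ times its weight, routine algebra with $h:=m-pN$ yields
\[
L^{\HH_N}(m)\, -\, p^k e(\HH_N)\, =\, p^{k-1}\bar{d}(\HH_N)\, h\, +\, O\big(e(\HH_N)p^{k-2}N^{-1}h^{2}\big).
\]
For $|h|\le\sqrt{pN\log N}$, which holds for $M$ with probability $1-o(1)$ by Chernoff, and under the density hypothesis $p\gg(\log N/N)^{(r-1)/(k-1)}$, the quadratic error is $o(\sigma_F)$. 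It follows that $(L^{\HH_N}(M)-p^k e(\HH_N))/\sigma_F = p^{k-1}\bar{d}(\HH_N)(M-pN)/\sigma_F + o_P(1)$, which by the classical CLT for the binomial converges in distribution to $N(0,\tau^2)$.

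The cleanest way to combine the two pieces is via characteristic functions. Set $g_N(m,\theta) := \Ex{\exp(i\theta D^{\HH_N}(B_m)/\sigma_F)}$. For each $m$ with $t:=m/N$ in the window $|t-p|\le\sqrt{p\log N/N}$, Theorem~\ref{thm:clt} applied with that $t$, combined with $(1-t)t^{2k-1}\sigma^2(\HH_N)N/\sigma_F^2 \to 1-\tau^2$, gives $g_N(m,\theta)\to\exp(-(1-\tau^2)\theta^2/2)$. Conditioning on $M$ and applying dominated convergence then yields
\[
\Ex{\exp(i\theta D^{\HH_N}(B_p)/\sigma_F)}\, \longrightarrow\, \exp(-(1-\tau^2)\theta^2/2)\cdot\exp(-\tau^2\theta^2/2)\, =\, \exp(-\theta^2/2),
\]
which is exactly the claim by L\'evy's continuity theorem.

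The main obstacle is that one really needs the convergence $g_N(m,\theta)\to\exp(-(1-\tau^2)\theta^2/2)$ to be uniform in $m$ over the $O(\sqrt{pN\log N})$-window, whereas Theorem~\ref{thm:clt} as stated concerns a single sequence $t=t_N$. This uniformity should however follow directly from the proof sketched in the excerpt: both the martingale CLT applied to $\Lambda^{\HH_N}(B_m)/\sigma_1$ (with quadratic variation controlled by Proposition~\ref{prop:var}) and the $L^2$ negligibility of $D^{\HH_N}(B_m)-\Lambda^{\HH_N}(B_m)$ from Proposition~\ref{prop:approx} have bounds that are continuous in $t$ and remain valid uniformly on the window, because our density assumption translates into the hypothesis of Theorem~\ref{thm:clt} for every $m$ with $|m-pN|\le\sqrt{pN\log N}$.
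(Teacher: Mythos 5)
Your decomposition $D^{\HH_N}(B_p) = D^{\HH_N}(B_M) + \bigl(L^{\HH_N}(M) - p^k e(\HH_N)\bigr)$, with $M = |B_p|$, is exactly the paper's. The paper's written proof then asserts that ``by standard properties of the normal distribution'' it suffices to have marginal asymptotic normality of the two summands plus the orthogonality $\Ex{X_N \mid X'_N = m} = 0$, and leaves the rest implicit. Your characteristic-function argument (conditioning on $M$, using convergence of the conditional characteristic function of $D^{\HH_N}(B_m)$ uniformly in the Chernoff window for $M$, then dominated convergence) is the right way to make that step rigorous: marginal normality plus zero conditional mean alone would not suffice, and what one actually uses is precisely conditional asymptotic normality uniformly over the relevant range of $m$, which is what you set up. So you take the same route as the paper but supply the missing justification for the combination step; the uniformity issue you flag is real, and the fix you sketch (that the inputs to Theorem~\ref{thm:clt}, namely Proposition~\ref{prop:var} and Proposition~\ref{prop:approx}, have bounds continuous in $t$ that hold uniformly over $|m - pN| \le \sqrt{pN\log N}$ under the density hypothesis) is sound.

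One small correction: the quadratic Taylor error in $L^{\HH_N}(m)$ about $m = pN$ should be $O\bigl(e(\HH_N)\,p^{k-2}\,h^2/N^2\bigr)$, not $O\bigl(e(\HH_N)\,p^{k-2}\,h^2/N\bigr)$; the latter, taken literally with $h^2 \le pN\log N$, is \emph{not} $o(\sigma_F)$ and would sink the argument. With the corrected exponent the bound is $O\bigl(e(\HH_N)p^{k-1}\log N / N\bigr) = O\bigl(N^{r-1}p^{k-1}\log N\bigr) = o\bigl(p^{k-1/2}N^{r-1/2}\bigr) = o(\sigma_F)$ under the stated density assumption, so the estimate does go through. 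You also implicitly absorb the (deterministic, $m$-independent) correction $L^{\HH_N}(pN) - p^k e(\HH_N) = O\bigl(e(\HH_N)p^{k-1}/N\bigr)$ into the error; this too is $o(\sigma_F)$ since $pN \to \infty$, so no harm, but it is worth stating.
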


\begin{proof} 
Let $\sigma_1^2=(1-p)p^{2k-1}\sigma^2(\HH_N)N$ and $\sigma_2^2=(1-p)p^{2k-1}\bar{d}(\HH_N)^2N$. By standard properties of the normal distribution it suffices to show that $D^{\HH_N}(B_p)$ may be expressed as a sum $X_N+X'_N$ where $X_N/\sigma_1, X'_N/\sigma_2\to N(0,1)$ in distribution and $X_N$ is orthogonal to $X'_N$ in the sense that for all $m$ we have $\Ex{X_N|X'_N=m}=0$.  It is easy to find such a representation, defining the random variable $M=|B_p|$, the number of elements in the random set $B_p$, we may simply decompose $D^{\HH}(B_p)$ as follows
\begin{align*}
D^{\HH}(B_p)\, &=\, N^{\HH}(B_p)\, -\, p^{k}e(\HH)\phantom{\Big|}\\
& =\, \big(N^{\HH}(B_p)\, -\, \Ex{N^{\HH}(B_p)|M}\big)\, +\, \big(\Ex{N^{\HH}(B_p)|M}\, -\, p^ke(\HH)\big) \, .\phantom{\Big|}
\end{align*}
The first bracketed quantity is exactly $D^{\HH}(B_m)$ for the corresponding value $m$ of $M$, it is easily verified (using Theorem~\ref{thm:clt}) that dividing by $\sigma_1$ this converges in distribution to a standard Gaussian.   The second bracketed quantity corresponds to $L^{\HH}(M)-p^{k}e(\HH)$.  Using the fact that $M\sim Bin(N,p)$ is asymptotically normally distributed it is easily verified that $(L^{\HH}(M)-p^{k}e(\HH))/\sigma_2$ also converges in distribution to a standard Gaussian, as required.
\end{proof}

\textbf{Notation} 

Let us emphasise that $t$ denotes $m/N$, the density of the random set $B_m\subseteq [N]$, throughout the article.  Likewise $s$ denotes $i/N$.

\textbf{Overview of the paper}

In Section~\ref{sec:Aux} we give important auxiliary results and well known inequalities which will be useful throughout the article.   In Section~\ref{sec:known} we adapt to our context the approach of Janson and Ruci\'nski~\cite{JR} to bound large deviations.  And at the end of Section~\ref{sec:known} we state a corollary of these results for link hypergraphs.  Using this corollary we show in Section~\ref{sec:ellto1} that $X^{\HH}_{\ell}(B_i)$ may be well approximated by a multiple of $X^{\HH}_1(B_i)$.  This result will be sufficient to show that $D^{\HH}(B_m)$ is well approximated by $\Lambda^{\HH}(B_m)$ in Section~\ref{sec:approx}.  This represents the first major task in proving Theorem~\ref{thm:main}.  

In Section~\ref{sec:var} we study the quadratic variation of the martingale representation of $\Lambda^{\HH}(B_m)$.  Finally, in Section~\ref{sec:main} we prove deviation bounds for $\Lambda^{\HH}(B_m)$, making use of our control of the quadratic variation, and we deduce Theorem~\ref{thm:main}.  In Section~\ref{sec:pworld} we show how Theorem~\ref{thm:pworld} for the model $B\sim B_p$ follows. In Section~\ref{sec:appl} we give applications of Theorems~\ref{thm:main} and~\ref{thm:pworld} to arithmetic progressions and solutions of the Sidon equation in $\{1,\dots ,N\}$.

\section{Preliminaries and tools}\label{sec:Aux}

In this section we lay out some preliminary results which we will rely on in the rest of the paper.  In Section~\ref{sec:Mart} we state a martingale representation for the deviation $D^{\HH}(B_m)$ given first in~\cite{FGSS} (see also~\cite{GGS2019} for a similar result in the context of subgraph counts).

\subsection{Martingale representation}\label{sec:Mart}

In order to state the result we require some notation.  Let us define $B_m=\{b_1,\dots ,b_m\}$ where $b_1,\dots ,b_N$ is a uniformly random permutation of $[N]$.  We may now count the evolution of the number of edges of $\HH$ contained in $B_i$ for $i\le m$.  In fact it is useful to also consider partially filled edges.  For $1\le \ell\le k$ we let $N^{\HH}_{\ell}(B_m)$ be the number of pairs $(S,f)$ such that $|S|=\ell, f\in E(\HH)$ and $S\subseteq f\cap B_i$.  That is $N^{\HH}_{\ell}(B_m)$ counts (with multiplicity) the number of $\ell$-element subsets of $B_i$ contained in edges of $\HH$.

We shall study the one step increment of this quantity.  Set
\[
A^{\HH}_\ell(B_i)\, :=\, \big|\{(S,f)\, :\, |S|=\ell,\, b_i\in S, \, S\subseteq f\cap B_{i}\}\big|\, =\, N^{\HH}_{\ell}(B_i)\, -\, N^{\HH}_{\ell}(B_{i-1})\, .
\]
Finally we may define centered versions of these increments:
\[
X^{\HH}_{\ell}(B_i)\, :=\, A^{\HH}_\ell(B_i)\, -\, \Ex{A^{\HH}_\ell(B_i)|B_{i-1}}\, .
\]
We may now state the martingale representation of $D^{\HH}(B_m)$ given in~\cite{FGSS}.

\begin{lemma}\label{lem:Mart}
	Let $\HH$ be a $k$-uniform hypergraph on $[N]$. Then
	\begin{equation}\label{eq:Mart}
	D^{\HH}(B_m)\, =\, \sum_{i=1}^{m} \sum_{\ell=1}^{k} \frac{(N-m)_{\ell}(m-i)_{k-\ell}}{(N-i)_k}\, \inc{\ell}{\HH}{i}\, .
	\end{equation}
\end{lemma}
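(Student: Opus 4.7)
The plan is to prove Lemma~\ref{lem:Mart} using the Doob martingale for $N^{\HH}(B_m)$ with respect to the filtration $\mathcal{F}_i := \sigma(b_1, \ldots, b_i)$. Setting $M_i := \Ex{N^{\HH}(B_m) \mid \mathcal{F}_i}$, we have $M_0 = L^{\HH}(m)$ and $M_m = N^{\HH}(B_m)$, so
\[
D^{\HH}(B_m) \,=\, M_m - M_0 \,=\, \sum_{i=1}^{m}(M_i - M_{i-1}),
\]
and the task reduces to identifying $M_i - M_{i-1}$ with the inner sum in~\eqref{eq:Mart}.

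The first step is to give a closed form for $M_i$ in terms of the random variables $\countx{\ell}{\HH}{i}$. Conditional on $\mathcal{F}_i$, the set $B_m \setminus B_i$ is a uniformly random $(m-i)$-subset of $[N] \setminus B_i$, so for each edge $f$ with $j := |f \cap B_i|$,
\[
\Pr(f \subseteq B_m \mid \mathcal{F}_i) \,=\, \frac{(m-i)_{k-j}}{(N-i)_{k-j}}.
\]
Expanding the right-hand side as a polynomial in $j$ in the falling-factorial basis $\{(j)_\ell : \ell = 0, \ldots, k\}$, and using the identity $\sum_{f} (|f\cap B_i|)_\ell = \ell!\, \countx{\ell}{\HH}{i}$, expresses $M_i$ as an explicit linear combination of the $\countx{\ell}{\HH}{i}$ with deterministic coefficients $c_\ell(i)$ depending only on $i, m, N, k$. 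The required polynomial identity can either be verified by matching values at $j = 0, 1, \ldots, k$, or read off combinatorially by conditioning on how many of the $k - j$ remaining vertices of $f$ land in $B_m \setminus B_i$ versus $[N]\setminus B_m$.

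Next I would compute $M_i - M_{i-1}$. By the tower property, $M_{i-1} = \Ex{M_i \mid \mathcal{F}_{i-1}}$, so the increment simplifies to $M_i - \Ex{M_i \mid \mathcal{F}_{i-1}}$. Using the expansion above together with $\add{\ell}{\HH}{i} = \countx{\ell}{\HH}{i} - \countx{\ell}{\HH}{i-1}$ and $\inc{\ell}{\HH}{i} = \add{\ell}{\HH}{i} - \Ex{\add{\ell}{\HH}{i} \mid \mathcal{F}_{i-1}}$, the increment naturally becomes a linear combination of the $\inc{\ell}{\HH}{i}$. The $\ell = 0$ term drops because $\add{0}{\HH}{i} \equiv 0$ (the empty set is contained in every edge regardless of $i$), which explains why the inner sum in~\eqref{eq:Mart} starts at $\ell = 1$.

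The main obstacle is the bookkeeping: one must verify that, after collecting terms, the coefficient of $\inc{\ell}{\HH}{i}$ simplifies exactly to $(N-m)_\ell(m-i)_{k-\ell}/(N-i)_k$. A clean route is to interpret this coefficient probabilistically: it is the chance that, among $k$ distinct positions drawn uniformly from $[N] \setminus B_{i-1}$, a specified set of $\ell$ of them exceed $m$ (i.e., land outside $B_m$) while the remaining $k - \ell$ lie in $(i, m]$ (i.e., inside $B_m \setminus B_i$). Grouping the summands of the martingale increment according to how the other $k - 1$ vertices of a typical edge $f$ containing $b_i$ are split between $B_m \setminus B_i$ and $[N] \setminus B_m$ then yields the desired identification. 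Since the representation has already appeared in~\cite{FGSS}, this verification can be taken essentially from there.
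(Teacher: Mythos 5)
The paper does not actually prove Lemma~\ref{lem:Mart}: it states the representation and cites~\cite{FGSS} for it, so there is no in-paper proof to compare against. Judged on its own, your plan is sound and reconstructs what the natural argument must be.

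The Doob-martingale setup $M_i := \Ex{N^{\HH}(B_m)\mid\F_i}$ with $D^{\HH}(B_m) = \sum_{i=1}^m (M_i - M_{i-1})$ is the right frame, and the closed form
\[
\Pr\bigl(f\subseteq B_m \mid \F_i\bigr) \;=\; \frac{(m-i)_{k-j}}{(N-i)_{k-j}}, \qquad j = |f\cap B_i|,
\]
is correct since $B_m\setminus B_i$ is conditionally a uniform $(m-i)$-subset of $[N]\setminus B_i$. Expanding this function of $j\in\{0,\dots,k\}$ in the falling-factorial basis $\{(j)_\ell\}_{\ell=0}^k$, summing over edges with $\sum_f (|f\cap B_i|)_\ell = \ell!\,N^{\HH}_\ell(B_i)$, and then applying the tower identity $M_{i-1}=\Ex{M_i\mid\F_{i-1}}$ together with $N^{\HH}_\ell(B_i)-\Ex{N^{\HH}_\ell(B_i)\mid\F_{i-1}} = X^{\HH}_\ell(B_i)$ does give $M_i-M_{i-1}=\sum_{\ell\ge 1} c_\ell(i)\,\ell!\,X^{\HH}_\ell(B_i)$, with the $\ell=0$ term vanishing because $A^{\HH}_0\equiv 0$. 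All of that is correct.

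Two small refinements. First, the ``bookkeeping'' you leave open has a clean one-line resolution via the Vandermonde convolution for falling factorials: writing
\[
\frac{(m-i)_{k-j}}{(N-i)_{k-j}} \;=\; \frac{(m-i)_{k-j}\,(N-i-k+j)_j}{(N-i)_k},
\]
expanding $(N-i-k+j)_j = \sum_{\ell=0}^{j}\binom{j}{\ell}(N-m)_\ell\,(m-i-k+j)_{j-\ell}$ (using $N-i-k+j = (N-m)+(m-i-k+j)$), and telescoping $(m-i)_{k-j}(m-i-k+j)_{j-\ell} = (m-i)_{k-\ell}$, yields exactly $c_\ell(i)\ell! = (N-m)_\ell(m-i)_{k-\ell}/(N-i)_k$, so you need not resort to matching values or to a separate combinatorial argument. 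Second, there is a slip in your probabilistic reading of that coefficient: the $k$ positions should be drawn from $[N]\setminus B_i$ (size $N-i$), not from $[N]\setminus B_{i-1}$ (size $N-i+1$); otherwise the denominator would be $(N-i+1)_k$ rather than $(N-i)_k$. With that corrected, the interpretation is accurate.
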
 

%
%
%
%

\subsection{Probability inequalities}

%

We now state some auxiliary probability inequalities for martingale deviations, namely the classical Hoeffding-Azuma martingale inequality, Freedman's inequality and its converse.  We begin with the Hoeffding-Azuma inequality~\cite{Azuma,Hoeff}.

\begin{lemma}[Hoeffding--Azuma inequality]\label{lem:HA}
Let $(S_i)_{i=0}^{m}$ be a martingale with increments $(X_i)_{i=1}^{m}$, and let $c_i=\|X_i\|_{\infty}$ for each $1\le i\le m$.
Then, for each $a>0$, 
\[
\pr{S_m-S_0\, >\, a}\, \le \, \exp\left(\frac{-a^2}{2\sum_{i=1}^{m}c_i^2}\right)\, .
\]
Furthermore, the same bound holds for $\pr{S_m-S_0\, <\, -a}$.
\end{lemma}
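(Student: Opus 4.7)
The plan is to use the standard exponential-moment (Chernoff) approach adapted to the martingale setting. First, I would fix a parameter $\lambda>0$ and apply Markov's inequality to the non-negative random variable $\exp(\lambda(S_m-S_0))$, giving
\[
\pr{S_m-S_0>a}\,\le\, e^{-\lambda a}\,\Ex{\exp(\lambda(S_m-S_0))}\, .
\]
The task then reduces to bounding the exponential moment of the martingale.

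To bound $\Ex{\exp(\lambda(S_m-S_0))}$, I would iteratively peel off one increment at a time. Writing $S_m-S_0=\sum_{i=1}^{m}X_i$ and conditioning on the natural filtration $\F_{i-1}$ generated by $X_1,\ldots,X_{i-1}$, the tower rule yields
\[
\Ex{\exp(\lambda(S_m-S_0))}\,=\, \Ex{\exp(\lambda(S_{m-1}-S_0))\cdot \Ex{\exp(\lambda X_m)\mid\F_{m-1}}}\, .
\]
The key ingredient is then Hoeffding's lemma applied conditionally: since $X_i$ is a martingale difference, $\Ex{X_i\mid\F_{i-1}}=0$, and since $|X_i|\le c_i$ deterministically, convexity of $x\mapsto e^{\lambda x}$ on $[-c_i,c_i]$ combined with the conditional mean-zero property gives
\[
\Ex{\exp(\lambda X_i)\mid\F_{i-1}}\,\le\, \exp\!\left(\frac{\lambda^2 c_i^2}{2}\right)\, .
\]
I would prove this sublemma by bounding $e^{\lambda x}$ above by the chord through $(-c_i,e^{-\lambda c_i})$ and $(c_i,e^{\lambda c_i})$, taking conditional expectation (the linear term vanishes), and then bounding the resulting function of $\lambda c_i$ via a Taylor expansion of its logarithm.

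Iterating the peeling argument from $i=m$ down to $i=1$ produces
\[
\Ex{\exp(\lambda(S_m-S_0))}\,\le\, \exp\!\left(\frac{\lambda^2}{2}\sum_{i=1}^{m}c_i^2\right)\, ,
\]
so that $\pr{S_m-S_0>a}\le \exp(-\lambda a+\tfrac{\lambda^2}{2}\sum c_i^2)$. Optimising over $\lambda>0$ with the choice $\lambda=a/\sum_{i=1}^{m}c_i^2$ yields the stated bound. The lower tail follows immediately by applying the upper-tail estimate to the martingale $(-S_i)_{i=0}^{m}$, whose increments $(-X_i)$ satisfy the same $\ell^\infty$ bounds. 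No step here is a serious obstacle; the only subtle point is the proof of the conditional Hoeffding lemma, which must be done carefully so that the deterministic bound $|X_i|\le c_i$ is used rather than any distributional assumption.
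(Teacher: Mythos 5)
The paper does not include a proof of this lemma; it is stated as a classical result with citations to Azuma and Hoeffding. Your argument is the standard exponential-moment proof of the Azuma--Hoeffding inequality (Markov's inequality, a conditional Hoeffding lemma via convexity and a Taylor bound on $\log\cosh$, peeling off increments via the tower property, and optimising $\lambda = a/\sum_i c_i^2$), and it is correct, including the reduction of the lower tail to the upper tail via the martingale $(-S_i)$.
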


Probabilistic intuition would suggest that variance (or rather quadratic variation) should be more relevant than $\sum_{i}\|X_i\|_{\infty}^2$.  Freedman's inequality~\cite{F} allows us to replace $\sum_{i}\|X_i\|_{\infty}^2$ by (essentially) the quadratic variation of the process up to that point plus a term which is often negligible.

\begin{lemma}[Freedman's inequality]\label{lem:Freedman}
Let $(S_i)_{i=0}^{m}$ be a martingale with increments $(X_i)_{i=1}^{m}$ with respect to a filtration $(\F_i)_{i=0}^{m}$, let $R \in \RR$ be such that $\max_i |X_i| \le R$ almost surely, and let
\[
V(j) := \sum_{i=1}^{j}  \Ex{ X_i^2 | \F_{i-1} }.
\]
Then, for every $\alpha, \beta > 0$, we have

\[
\pr{S_j - S_0 \ge \alpha \mbox{     and     } V(j) \le \beta \mbox{     for some j}} \le \exp \left( \frac{-\alpha^2}{2(\beta + R\alpha)} \right).
\]
\end{lemma}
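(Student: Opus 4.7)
The plan is to apply the standard exponential supermartingale strategy. Fix $\lambda > 0$ and set $g(\lambda) := (e^{\lambda R} - 1 - \lambda R)/R^2$. Define
\[
M_j \, :=\, \exp\bigl(\lambda(S_j - S_0) \, -\, g(\lambda)\, V(j)\bigr), \qquad 0 \le j \le m,
\]
so that $M_0 = 1$ and $M_j \ge 0$. The first step is to show that $(M_j)_{j=0}^m$ is a supermartingale with respect to $(\F_j)_{j=0}^m$.

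For the supermartingale property I would use the elementary fact that $x \mapsto (e^x - 1 - x)/x^2$ is non-decreasing on $\RR$. Applied to $x = \lambda X_i$, which lies in $[-\lambda R, \lambda R]$ almost surely, this yields $e^{\lambda X_i} \le 1 + \lambda X_i + g(\lambda) X_i^2$. Taking conditional expectations, using $\Ex{X_i \mid \F_{i-1}} = 0$ together with $1 + y \le e^y$, one obtains $\Ex{e^{\lambda X_i} \mid \F_{i-1}} \le \exp\bigl(g(\lambda)\,\Ex{X_i^2 \mid \F_{i-1}}\bigr)$, which rearranges to $\Ex{M_j \mid \F_{j-1}} \le M_{j-1}$.

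Next, introduce the stopping time $\tau := \inf\{j \ge 0 : S_j - S_0 \ge \alpha \text{ and } V(j) \le \beta\}$ (with $\tau = \infty$ if this set is empty) and let $E := \{\tau \le m\}$ be the event whose probability is to be bounded. On $E$ one has $M_\tau \ge \exp(\lambda\alpha - g(\lambda)\beta)$, and optional stopping applied to the non-negative supermartingale $(M_j)$ at the bounded stopping time $\tau \wedge m$ gives $\Ex{M_{\tau \wedge m}} \le M_0 = 1$. Together these imply
\[
\pr{E}\,\exp\bigl(\lambda\alpha - g(\lambda)\beta\bigr) \, \le\, \Ex{M_{\tau \wedge m}\,\mathbf{1}_E} \, \le\, \Ex{M_{\tau \wedge m}} \, \le\, 1,
\]
so $\pr{E} \le \exp\bigl(-\lambda\alpha + g(\lambda)\beta\bigr)$ for every $\lambda > 0$.

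It remains to optimise over $\lambda > 0$. Setting $\lambda := R^{-1}\log(1 + \alpha R/\beta)$ and writing $u := \alpha R/\beta$, a direct substitution collapses the exponent to $-(\beta/R^2)\bigl((1+u)\log(1+u) - u\bigr)$. The one-variable inequality $(1+u)\log(1+u) - u \ge u^2/(2(1+u))$ for $u \ge 0$ (verified, for example, by checking the derivative at $u=0$ and computing) then delivers the stated bound $\exp\bigl(-\alpha^2/(2(\beta + R\alpha))\bigr)$. I expect the main obstacle to be purely cosmetic: a careful verification of this last real-variable inequality and the matching of constants. All of the probabilistic content lies in the supermartingale construction and the optional-stopping step.
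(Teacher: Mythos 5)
Your proof is correct, but note that the paper does not prove this lemma at all: it is quoted as a known result from Freedman's paper (reference~[F] in the bibliography), so there is no internal proof to compare against. Your argument — the exponential supermartingale $M_j = \exp(\lambda(S_j - S_0) - g(\lambda) V(j))$ with $g(\lambda) = (e^{\lambda R} - 1 - \lambda R)/R^2$, the pointwise bound $e^x \le 1 + x + g(\lambda)x^2$ on $[-\lambda R,\lambda R]$ via monotonicity of $(e^x-1-x)/x^2$, optional stopping at the bounded time $\tau\wedge m$, and Chernoff optimisation with $\lambda = R^{-1}\log(1+\alpha R/\beta)$ followed by $(1+u)\log(1+u)-u \ge u^2/(2(1+u))$ — is essentially Freedman's original proof and is complete and correct; the final one-variable inequality does hold (the difference of the two sides vanishes to second order at $u=0$ and has nonnegative second derivative $u(2+u)/(1+u)^3$).
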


Freedman also proved a converse for this inequality, which requires some new notation. Define the stopping time $\tau_{\alpha}$ to be the least $j$ such that $S_j > S_0 + \alpha$ and let $T_{\alpha} := V(\tau_{\alpha})$.

The following is the converse of Freedman's inequality~\cite{F}.

\begin{lemma}[Converse Freedman inequality]\label{lem:Freedman_converse}
Let $(S_i)_{i=0}^{m}$ be a martingale with increments $(X_i)_{i=1}^{m}$ with respect to a filtration $(\F_i)_{i=0}^{m}$, let $R \in \RR$ be such that $\max_i |X_i| \le R$ almost surely, and let $T_{\alpha}$ be as defined above. Then, for every $\alpha, \beta > 0$, we have
\[
\pr{T_{\alpha} \le \beta} \ge \frac{1}{2} \exp \left( \frac{-\alpha^2(1+4\delta)}{2\beta}\right),
\]
where $\delta$ is minimal such that $\beta/\alpha \ge 9R\delta^{-2}$ and $\alpha^2/\beta \ge 16\delta^{-2}\log (64\delta^{-2})$.
\end{lemma}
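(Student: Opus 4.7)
The plan is to use an exponential tilting (change of measure) argument. Set $\lambda = \alpha/\beta$, which is the optimal tilt parameter suggested by comparing the target exponent $-\alpha^2/(2\beta)$ with the Chernoff form $-\lambda\alpha + \lambda^2\beta/2$. I would first establish that the process
\[
M_i \,=\, \exp\!\left( \lambda(S_i - S_0) \,-\, \tfrac{\lambda^2}{2}(1 + c\lambda R)\, V(i) \right)
\]
is a supermartingale for a suitable absolute constant $c$, provided $\lambda R$ is small. This uses the standard one-sided MGF estimate $\Ex{e^{\lambda X_i}|\F_{i-1}} \le \exp\!\big(\tfrac{\lambda^2}{2}\Ex{X_i^2|\F_{i-1}}(1+c\lambda R)\big)$ valid for bounded increments. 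The hypothesis $\beta/\alpha \ge 9R\delta^{-2}$ translates to $\lambda R \le \delta^2/9$, making the correction term of order $\delta$.

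Next, I would introduce the stopping time $\tau = \tau_\alpha \wedge \sigma_\beta$, where $\sigma_\beta := \min\{j : V(j+1) > \beta\}$, and define a tilted probability measure $Q$ by
\[
\frac{dQ}{dP} \,=\, \frac{M_\tau}{\Ex{M_\tau}}\, .
\]
Since $M$ is a supermartingale with $M_0 = 1$, the normalization is at most $1$. Under $Q$, the process $S_i$ gains an upward drift proportional to $\lambda V(i)$, so by the time $V(j)$ approaches $\beta$ the cumulative drift is close to $\alpha$. The plan is to show that under $Q$ one has $Q(\tau_\alpha \le \sigma_\beta) \ge 1/2$; this is where the second hypothesis $\alpha^2/\beta \ge 16\delta^{-2}\log(64\delta^{-2})$ enters, by ruling out, via Freedman's upper bound (Lemma~\ref{lem:Freedman}) applied to the drift-removed $Q$-martingale, the possibility that $S_j$ fails to reach level $\alpha$ before the quadratic variation exits $[0,\beta]$.

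Finally, I would transfer back via
\[
\pr{T_\alpha \le \beta} \,=\, \pr{\tau_\alpha \le \sigma_\beta} \,=\, \Ex{M_\tau}\cdot \mathbb{E}_Q\!\left[ \mathbf{1}_{\{\tau_\alpha \le \sigma_\beta\}}\, M_\tau^{-1}\right].
\]
On the event $\{\tau_\alpha \le \sigma_\beta\}$ one has $S_\tau \le S_0 + \alpha + R$ and $V(\tau)\le \beta$, giving a pointwise lower bound $M_\tau^{-1} \ge \exp(-\lambda(\alpha+R) - \tfrac{\lambda^2\beta}{2}(1+c\lambda R))$. Combining this with $Q(\tau_\alpha \le \sigma_\beta) \ge 1/2$ and substituting $\lambda = \alpha/\beta$ yields the exponent $-\tfrac{\alpha^2}{2\beta}(1 + c\lambda R) - \lambda R$, which, after bookkeeping using $\lambda R \le \delta^2/9$, can be absorbed into $-\tfrac{\alpha^2(1+4\delta)}{2\beta}$.

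The main obstacle will be the second step: proving that under the tilted measure $Q$ the event $\{\tau_\alpha \le \sigma_\beta\}$ has probability at least $1/2$. A direct application of Freedman's inequality to the $Q$-martingale requires honest control of its conditional variances and increment bound, both of which are inherited from the original process but need to be checked carefully against the tilted dynamics; it is here that the awkward auxiliary condition involving $\log(64\delta^{-2})$ is used, so the quantitative calibration of $\delta$ must be done precisely.
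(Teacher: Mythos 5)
The paper does not prove this lemma: it is quoted verbatim from Freedman~\cite{F} and used as a black box, so there is no proof in the paper to compare against. Your attempt must therefore be judged on its own terms, and as written it is a plausible plan but not a proof, with two genuine gaps.

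First, the transfer step has a sign problem. You set $\frac{dQ}{dP}=M_\tau/\Ex{M_\tau}$ with $M$ a \emph{super}martingale and note $\Ex{M_\tau}\le 1$, then write $\pr{T_\alpha\le\beta}=\Ex{M_\tau}\,\mathbb{E}_Q\!\left[\mathbf{1}_{\{\tau_\alpha\le\sigma_\beta\}}M_\tau^{-1}\right]$. To conclude a lower bound on the left you need a \emph{lower} bound on $\Ex{M_\tau}$, and the supermartingale property gives only the useless upper bound; $\Ex{M_\tau}$ could in principle be tiny. The standard repair is to tilt by the exact martingale $N_i=\prod_{j\le i} e^{\lambda X_j}/\Ex{e^{\lambda X_j}\mid\F_{j-1}}$, for which $\Ex{N_\tau}=1$, and only afterwards sandwich $N_\tau$ between explicit exponentials with $(1\pm c\lambda R)$ corrections via two-sided MGF estimates; alternatively, flip the sign of your correction so that $M$ is a submartingale with $\Ex{M_\tau}\ge 1$.

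Second, and more seriously, the claim $Q(\tau_\alpha\le\sigma_\beta)\ge 1/2$ is only asserted, yet it is the entire content of the lemma. With $\lambda=\alpha/\beta$ the accumulated $Q$-drift at time $\sigma_\beta$ is only about $\lambda\beta=\alpha$, which leaves no margin against the residual fluctuations of order $\sqrt{\beta}$ of the re-centred walk, so there is no reason for the probability to be $\ge 1/2$ as stated. One must take $\lambda=(1+\eps)\alpha/\beta$ for a suitable small $\eps$ (of order $\delta$), compute the $Q$-conditional mean and variance of the increments precisely enough to control both drift and fluctuation, and then invoke Freedman's upper tail for the re-centred process. It is exactly here that the auxiliary hypothesis $\alpha^2/\beta\ge 16\delta^{-2}\log(64\delta^{-2})$ enters, and without carrying out this calibration the argument is incomplete.
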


\section{A large deviations bound and application to link hypergraphs}\label{sec:known}

The main aim of this section is to prove a bound on the probability of a deviation in link hypergraphs $\HH(x)$ of a hypergraph $\HH$, see Corollary~\ref{cor:link}.  Since $\HH(x)$ is in some sense ``just some hypergraph'' we obtain the bound by simply applying the following proposition which bounds the probability of large deviations.  The proposition is proved using Janson's inequality for the lower tail and a moment argument similar to that of Janson and Ruci\'nski ~\cite{JR} for the upper tail.

It will be useful to allow the edges of the hypergraph to have a non-negative weight.  Given such a weighted hypergraph we naturally include the weight in all related parameters, so that $e(\HH)$ denotes the sum of all weights, the degree of a vertex is interpreted as the sum of weights of edges containing that vertex, and, for example $N^{\HH}(B_m)$ is the sum of the weights of edges of $\HH$ contained in $B_m$.

\begin{prop}\label{prop:known}  Let $1\le r\le k$ and $C$ be integers and let $\eps>0$, then there is a constant $c=c(k,C,\eps)>0$ such that the following holds.  Let $\HH$ be a $k$-uniform (weighted) hypergraph with $\Delta_r(\HH)\le C$.  Let $t\ge N^{-r/k}$, and let $m=tN$.  Then, for all $i\le m$, we have
\[
\pr{|D^{\HH}(B_i)|\, \ge\, \eps t^k N^r}\, \le\, 2N\exp(-ct^{k/r}N)\, .
\]
\end{prop}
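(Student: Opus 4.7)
The plan is to handle the upper and lower tails separately and to work primarily in the binomial model $B\sim B_p$ with $p=t$, transferring back to the uniform model $B_i$ at the end. Since $B_p$ conditioned on $|B_p|=i$ has the distribution of $B_i$, and $\pr{|B_p|=i}=\Omega(1/\sqrt{N})$ by a local limit theorem for the binomial distribution, this transfer costs only a factor $O(\sqrt{N})$ in the probability. Moreover, $L^{\HH}(i)$ and $p^k e(\HH)$ differ by $O(N^{r-1})$, which is negligible compared with $\eps t^k N^r$ under the assumption $t\ge N^{-r/k}$, so replacing one mean by the other only shrinks $\eps$ by a constant factor. These combined losses are absorbed into the $N$ prefactor of the claimed bound.

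For the lower tail I would invoke Janson's inequality, which gives
\[
\pr{N^{\HH}(B_p)\le p^ke(\HH) - a}\,\le\,\exp\!\left(-\tfrac{a^2}{2\bar{\Delta}}\right),\qquad \bar{\Delta}\,=\,\sum_{f\ne f',\,f\cap f'\ne\emptyset} p^{|f\cup f'|}.
\]
Splitting the sum over the intersection size $j=|f\cap f'|$ and using that $\Delta_r(\HH)\le C$ implies $\Delta_j(\HH)\le C' N^{\max(r-j,0)}$ for every $j$, I would show $\bar{\Delta}=O(N^{2r-1}p^{2k-1}+N^{r}p^{k+1})$. Substituting $a=\Theta(\eps t^k N^r)$ then gives a lower-tail bound of the form $\exp\!\bigl(-c\eps^2\min(tN,\,t^{k-1}N^r)\bigr)$; a short verification shows that both $tN$ and $t^{k-1}N^r$ dominate $t^{k/r}N$ on the range $t\ge N^{-r/k}$, so this is stronger than the target.

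For the upper tail I would adapt the moment method of Janson and Ruci\'nski~\cite{JR}. Concretely, I would estimate the centred moment $\Ex{(N^{\HH}(B_p)-p^ke(\HH))^{q}}$ for a carefully chosen integer $q$, by expanding the $q$-th power as a sum over ordered $q$-tuples of edges of $\HH$ weighted by $p^{|\text{union}|}$ and then grouping the tuples by their overlap pattern. The combinatorial count of $q$-tuples realising each pattern is controlled using $\Delta_r\le C$ (together with the derived bounds on $\Delta_j$ for $j<r$). Taking $q$ of order $t^{k/r}N$ and applying Markov's inequality then yields the desired $\exp(-ct^{k/r}N)$ upper-tail bound, and combining with the lower-tail estimate and the transfer to the $B_i$ model concludes the proof.

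The principal obstacle is the upper-tail moment calculation. Its combinatorial heart --- showing that the dominant overlap patterns in $q$-tuples are those aggregated at scale $r$ (so that $\Delta_r\le C$ is used sharply), and balancing the factorial growth of $q$-tuple counts against the probability $p^{|\text{union}|}$ of coverage --- is the most technical step. This is precisely where the exponent $k/r$ emerges: $t^{k/r}N$ is the ``$r$-th root'' of the Poisson-type rate $t^k N^r=(t^{k/r}N)^r$, reflecting the $r$-fold level of degree control imposed by the hypothesis $\Delta_r(\HH)\le C$.
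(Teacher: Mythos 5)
Your plan matches the paper's approach: Janson's inequality for the lower tail, a Janson--Ruci\'nski-style moment bound combined with Markov's inequality for the upper tail, a moment order of $\Theta(t^{k/r}N)$, and a binomial-to-hypergeometric transfer at the end.

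There is, however, a slip in the transfer step. The statement must hold for every $i \le m$, but you propose to work in $B_p$ with $p = t = m/N$ fixed. When $i$ is far from $m$ (say $i = m/2$), $\pr{\Bin(N,t) = i}$ is exponentially small, so conditioning on $|B_t| = i$ does not give a usable transfer; and $L^{\HH}(i) \approx (i/N)^k e(\HH)$ then differs from $t^k e(\HH)$ by a quantity of order $t^k N^r$, not $O(N^{r-1})$, so the recentring also breaks. The paper instead takes the binomial parameter to be $s := i/N$, so that $\pr{\Bin(N,s) = i} \ge 1/N$ and the two means differ only by $O(t^{k-1}N^{r-1})$, while keeping the deviation threshold at the larger scale $\eps t^k N^r$. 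This last point is why the moment lemma (Lemma~\ref{lem:moment}) is stated for all $p \le t$: the alternative $(\eps t^k N^r/2)^\ell$ is there precisely to absorb the mismatch when $s \ll t$. With $p = i/N$ in place of $p = t$, your outline would go through and coincide with the paper's proof.
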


\begin{remark} A stronger bound with $t^{k/r}N$ replaced by $\min\{t^kN^r,t^{k/r}N\log{N}\}$ ought to follow by instead using Warnke's results~\cite{Wlog} to bound the upper tail.  Doing so would allow one to extend the intervals of deviations considered in Theorem~\ref{thm:main} and Theorem~\ref{thm:pworld} by a polylog factor.
\end{remark}

We now state a lemma which bounds moments of the count of edges of $\HH$ in the random set.  In fact we work in the $B_p$ model, in which each elements of $[N]$ is included in $B_p$ independently with probability $p$.  The same result holds in the $B_m$ model but it is marginally easier to prove in the $B_p$ model.  The result is different only in a minor way from the bound proved by Janson and Ruci\' nski~\cite{JR}, and we use essentially the same proof.

\begin{lemma}\label{lem:moment} Let $1\le r\le k$ and $C$ be integers and let $\eps>0$, then there is a constant $c=c(k,C,\eps)>0$ such that the following holds.  Let $\HH$ be a $k$-uniform (weighted) hypergraph with $\Delta_r(\HH)\le C$.  Let $t\in (0,1)$.  Then, for all $p\le t$, we have
\[
\Ex{N^{\HH}(B_p)^{\ell}}\, \le\, \max\left\{\mu^{\ell}(1+\eps/2)^\ell\, , \, \left(\frac{\eps t^kN^r}{2}\right)^\ell\right\}\,
\]
for all positive integers $\ell\le ct^{k/r}N$, where $\mu:=p^{k}e(\HH)$.
\end{lemma}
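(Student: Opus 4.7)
The plan is to adapt the moment expansion of Janson and Ruci\'nski~\cite{JR} to the weighted setting, exploiting $\Delta_r(\HH)\le C$ (which in particular gives $\Delta_1(\HH)\le k^{r-1}CN^{r-1}$) together with the smallness of $\ell$ relative to $t^{k/r}N$.

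First I expand the $\ell$-th moment as
\[
\Ex{N^{\HH}(B_p)^\ell}\,=\,\sum_{(f_1,\dots,f_\ell)\in E(\HH)^\ell}\Big(\prod_{i=1}^{\ell}w(f_i)\Big)\,p^{|f_1\cup\cdots\cup f_\ell|}
\]
and classify ordered tuples by the partition $\pi$ of $[\ell]$ into the connected components of the \emph{intersection graph}, where $i\sim j\iff f_i\cap f_j\neq\emptyset$. Since indicators attached to vertex-disjoint edge sets are independent, the expectation factorises over the blocks of $\pi$, so that
\[
\Ex{N^{\HH}(B_p)^\ell}\,\le\,\sum_{\pi\vdash[\ell]}\prod_{B\in\pi}C(|B|),
\]
where $C(s)$ is the analogous weighted sum restricted to ordered $s$-tuples whose intersection graph is connected. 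Trivially $C(1)=\mu$.

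For $s\ge 2$ I bound $C(s)$ by revealing the tuple along a spanning tree of the intersection graph: the root edge contributes at most $\mu$, and each subsequent edge must share at least one vertex with a previously revealed edge, so its weighted count times the associated $p$-factor is at most $ks\cdot \Delta_1(\HH)\cdot p^{k-1}\le C_1\, s\, N^{r-1}p^{k-1}$. Allowing for $s^{s-2}$ spanning trees (Cayley) and $s$ choices of root yields
\[
C(s)\,\le\, s^{s-1}\,\mu\cdot\big(C_1\, s\, N^{r-1}p^{k-1}\big)^{s-1}.
\]
The two branches of the max then emerge on assembling via the partition sum — equivalently, by a coefficient estimate for the exponential generating function $\exp\big(\sum_{s\ge 1}C(s)x^s/s!\big)$. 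When $\mu\ge \eps t^kN^r/2$, one has (using $e(\HH)=O(N^r)$) that $p\gtrsim t$, so the ratio $s\,N^{r-1}p^{k-1}/\mu\asymp s/(pN)$ is small on the range $s\le\ell\le ct^{k/r}N$, and the contribution of all non-singleton partitions of $[\ell]$ is absorbed into a factor $(1+\eps/2)^\ell$ multiplying the singleton-partition contribution $\mu^\ell$. When instead $\mu<\eps t^kN^r/2$, the same spanning-tree estimate — using $p\le t$ to bound the per-edge absolute contribution by $t^kN^r$ in place of $\mu$ — gives $C(s)\le(\eps t^kN^r/2)^s$, so the partition sum lies below $(\eps t^kN^r/2)^\ell$.

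The main obstacle will be keeping the constants sharp enough throughout the spanning-tree accounting and the partition-generating-function sum so as to reach the explicit factor $(1+\eps/2)$ rather than a weaker $(1+o(1))$; this is what drives the precise choice of $c=c(k,C,\eps)$, which must be small enough to make all sub-leading corrections collapse into $\eps/2$ uniformly across the two regimes of the maximum.
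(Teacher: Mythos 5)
The student takes a genuinely different route from the paper. The paper proves a single conditional bound
\[
\sum_{e\in E(\HH)}\pr{e\subseteq B_p\mid U\subseteq B_p}\;\le\;\max\Big\{\mu(1+\eps/2),\ \tfrac{\eps t^kN^r}{2}\Big\}
\]
for every $U$ with $|U|\le k\ell$, exploiting the full hierarchy of bounds $\Delta_j\le CN^{r-j}$ for $1\le j\le r$, and then obtains the moment bound by a two-line induction on $\ell$; the two branches of the maximum fall out immediately, with no combinatorics over partitions. You instead propose a cluster expansion: decompose the $\ell$-th moment over connected components of the intersection graph, bound the connected $s$-tuple sums $C(s)$ by a spanning-tree count using only $\Delta_1$, and reassemble via a partition sum / exponential generating function. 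This is a legitimate alternative strategy, but as written it has a concrete flaw and an unjustified final step.

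The flaw is a combinatorial double count in the bound $C(s)\le s^{s-1}\mu\big(C_1sN^{r-1}p^{k-1}\big)^{s-1}$. The Cayley factor $s^{s-2}$ together with the choice of root already encodes, for each non-root edge, which previously revealed edge it attaches to (its tree-parent); the extra factor $s$ inside the parenthesis re-enumerates that same choice. The clean spanning-tree bound is $C(s)\le s^{s-1}\mu\big(kCN^{r-1}p^{k-1}\big)^{s-1}$, i.e.\ one power of $s$ fewer. With your stated bound the reassembly fails: plugging $C(s)/\mu^s\le(C_1s^2N^{r-1}p^{k-1}/\mu)^{s-1}$ into the partition sum produces large-block contributions that are not controlled by $(1+\eps/2)^\ell$ over the whole range $\ell\le ct^{k/r}N$. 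The second regime is broken even more plainly: asserting $C(s)\le(\eps t^kN^r/2)^s$ silently discards the tree factor $s^{s-1}$, and the partition sum then carries an extra $\sum_{\pi}\prod_B|B|^{|B|-1}=(\ell+1)^{\ell-1}$ (the count of labelled rooted forests), which cannot be absorbed into $(\eps t^kN^r/2)^\ell$.

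Even with the corrected $C(s)$, the claim that ``the contribution of all non-singleton partitions is absorbed into a factor $(1+\eps/2)^\ell$'' is the crux of the argument, not a remark. It can be made to work via the exponential formula plus a Cauchy coefficient estimate at radius $z_0=\ell/\mu$, using $\rho\ell/\mu\asymp\ell/(pN)\lesssim c$ and the Stirling cancellation $\ell!\,e^{\ell}/\ell^{\ell}=\Theta(\sqrt{\ell})$, but this is a genuinely delicate computation that must be carried out, and the required smallness of $c$ interacts with the choice of $z_0$ and with the branch structure of the maximum. The paper's induction sidesteps all of this: once the single conditional estimate is proved, the moment bound follows immediately and the constant $c$ only needs to make a fixed finite sum of terms $Ck^j(\ell/t^{k/r}N)^j$ small.
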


\begin{proof}  We begin by proving that for any set $U$ of at most $k\ell$ elements of $[N]$ we have
\eq{begin}
\sum_{e\in E(\HH)}\pr{e\subseteq B_p|U\subseteq B_p}\, \le\, \max\left\{\mu(1+\eps/2)\, , \, \frac{\eps t^kN^r}{2}\right\}
\eqe
provided $\ell\le ct^{k/r}N$, for a sufficiently small small constant $c=c(k,C,\eps)>0$.

Let us count how many edges of $\HH$ have certain intersections with the set $U$.  We begin by bounding the number of edges $e$ with $|e\cap U|\ge r$.  By our condition that $\Delta_r\le C$, there are clearly at most $C\binom{k\ell}{r}\, \le\, Ck^r\ell^r$ such edges.  For $1\le j<r$ we have $\Delta_j\le N^{r-j}
\Delta_r\le CN^{r-j}$ and so there are at most $\binom{k\ell}{j}CN^{r-j}\le Ck^j\ell^jN^{r-j}$ edges $e$ with $|e\cap U|=j$.  Finally, we simply use $e(\HH)$ as the upper bound for the number of edges not intersecting $U$.

Armed with these bounds we have that
\begin{align*}
\sum_{e\in E(\HH)}\pr{e\subseteq B_p|U\subseteq B_p}\, &\le\, e(\HH)p^k\, +\, \sum_{j=1}^{r-1}Ck^j\ell^jN^{r-j}p^{k-j}\, +\, Ck^r\ell^r\\
& \le \, e(\HH)p^k\, +\, Ck^r t^k N^r \sum_{j=1}^{r-1}\left(\frac{\ell}{tN}\right)^j\, +\, Ck^r\ell^r\\
&\le\, e(\HH)p^k\, +\, Ck^r t^k N^r \sum_{j=1}^{r-1}c^j\, +\, Ck^rc^r t^kN^r\\
&\le \, e(\HH) p^k\, +\, Crk^r c t^k N^r 
\end{align*}
where we assume in the last line that our choice of $c$ will be at most $1$.  The exact form of the argument now depends on the value of $\mu=p^ke(\HH)$.  

If $\mu\ge \eps t^kN^r/4$ then we have 
\[
\sum_{e\in E(\HH)}\pr{e\subseteq B_p|U\subseteq B_p}\, \le \, \mu(1+\eps/2)
\]
provided $c <\eps^2/8Ck^{k+1}$.

If $\mu\le \eps t^k N^r/4$ then we have
\[
\sum_{e\in E(\HH)}\pr{e\subseteq B_p|U\subseteq B_p}\, \le \,\frac{\eps t^k N^r}{2}
\]
provided $c <\eps/4Ck^{k+1}$.  These bounds give us ~\eqr{begin}.

The lemma now follows by a straightforward induction argument, as 
\begin{align*}\Ex{N^{\HH}(B_p)^{\ell}}\, &=\, \sum_{e_1,\dots ,e_\ell\in E(\HH)}\pr{e_1\cup \dots \cup e_{\ell}\subseteq B_p}\\
&= \, \sum_{e_1,\dots ,e_{\ell-1}\in E(\HH)} \pr{e_1\cup \dots \cup e_{\ell-1}\subseteq B_p}\sum_{e_{\ell}\in e(\HH)}\pr{e_{\ell}\subseteq B_p|e_1\cup\dots \cup e_{\ell-1}\subseteq B_p}\\
& \le\, \max\left\{\mu(1+\eps/2)\, , \, \frac{\eps t^kN^r}{2}\right\}\Ex{N^{\HH}(B_p)^{\ell-1}}\, .
\end{align*}
Note that the last inequality was obtained using our bound~\eqr{begin}.
\end{proof}

We now deduce Proposition~\ref{prop:known}.

\begin{proof}[Proof of Proposition~\ref{prop:known}]  Let us set $s:=i/N$.  Note that $s\le t=m/N$.  Throughout the proof we set $\mu=\Ex{N^{\HH}(B_s)}=s^ke(\HH)$.

It may be easily verified (using Stirling's approximation for example) that $\pr{\Bin(N,s)=i}\ge 1/N$, and so it suffices to prove the bounds
\eq{mustl}
\pr{N^{\HH}(B_s)\, \le\, \mu\, -\, \eps t^k N^r}\, \le\, \exp(-ct^{k/r}N)\, 
\eqe
and
\eq{mustu}
\pr{N^{\HH}(B_s)\,\ge \, \mu\, +\, \eps t^k N^r}\, \le\, \exp(-ct^{k/r}N)\, 
\eqe
for some $c=c(k,C,\eps)>0$.  We have used here that the difference between the means in the $B_i$ and $B_s$ models is $O(t^{k-1}N^{r-1})$ and so is negligible.

We begin by proving~\eqr{mustl}, the bound on the lower tail.  We do so using Janson's inequality, see Theorem 2.14 of~\cite{JLR}.  We note that the bound on $\Delta_r$ gives us that $\Delta_j\le CN^{r-j}$ for all $1\le j\le r$ and $\Delta_j\le C$ for all $j\ge r$.  We shall use these bounds to control the quantity $\overline{\Delta}$ which occurs in Janson's inequality.  We have
\begin{align*}
\overline{\Delta}\, &=\, \sum_{e,f\in E(\HH)} \pr{e\cup f\subseteq B_s}1_{e\cap f\neq \emptyset}\\
& \le \, \sum_{j=1}^{r} \binom{N}{j} (CN^{r-j})^2 s^{2k-j}\, +\, e(\HH)2^k C s^k\\
& \le \, C^2 \sum_{j=1}^{r} N^{2r-j}s^{2k-j}\, +\, 2^k C^2 s^kN^r\\
& \le\,  rC^2 t^{2k-1}N^{2r-1}\, +\, 2^k C^2 t^kN^r\\
& \le \, C' \max\{t^{2k-1}N^{2r-1},t^kN^r\}
\end{align*}
for some constant $C'=C'(C,k)$.  We may now apply Janson's inequality to obtain for some $c=c(k,C,\eps)>0$ that
\begin{align*}
\pr{N^{\HH}(B_s)\, \le\, s^ke(\HH)\, -\, \eps t^k N^r}\, &\le\, \exp\left(\frac{-\eps^2 t^{2k}N^{2r}}{2\overline{\Delta}}\right)\\
&\le\, \exp\left(\frac{-\eps^2 t^{2k}N^{2r}}{2C' \max\{t^{2k-1}N^{2r-1},t^kN^r\}}\right)\\
&\le \, \exp(-c\min\{tN,t^{k}N^{r}\})\\
&\le\, \exp(-ct^{k/r}N)\, ,
\end{align*}
where for the last inequality we used that $k\ge r$ and $t\ge N^{-r/k}$.

For the upper tail,~\eqr{mustu}, we use Lemma~\ref{lem:moment}, which (applied with $\eps/C$) gives us that
\[
\Ex{N^{\HH}(B_s)^{\ell}}\, \le\, \max\left\{\mu^{\ell}(1+\eps/2C)^\ell\, , \, \left(\frac{\eps t^kN^r}{2C}\right)^\ell\right\}\, ,
\]
where $\ell= c't^{k/r}N$ for some constant $c'>0$.  We now simply apply Markov.  We consider two cases based on which term is larger.  We note that $\mu\, s^ke(\HH)\, \le \, t^kN^r\Delta_r\, \le\, Ct^kN^r$.

If the maximum is the first term then by Markov we have
\begin{align*}
\pr{N^{\HH}(B_s)\,\ge \, \mu\, +\, \eps t^k N^r}\,&  \le\, \pr{N^{\HH}(B_s)\,\ge \, \left(1+\frac{\eps}{C}\right)\mu}\\
& =\, \pr{N^{\HH}(B_s)^{\ell}\,\ge \, \left(1+\frac{\eps}{C}\right)^{\ell}\mu^{\ell}}\\
& \le\, \frac{(1+\eps/2C)^{\ell}}{(1+\eps/C)^{\ell}}\, ,
\end{align*}           
which is of the form $\exp(c\ell)=\exp(-c t^{k/r}N)$, as required.

If the maximum is the second term then 
\begin{align*}
\pr{N^{\HH}(B_s)\,\ge \, \mu\, +\, \eps t^k N^r}\,& \le\, \pr{N^{\HH}(B_s)\,\ge \, \eps t^k N^r}\\
& \le \, \frac{(\eps t^k N^r/2C)^{\ell}}{(\eps t^k N^r)^{\ell}}\\
&=\, (2C)^{-\ell}\\
&\le \, \exp(-ct^{k/r}N)\, ,
\end{align*} 
for some constant $c>0$, as required.
\end{proof}

We now state the application which will be of interest to us.  In fact we require a result which is not only for link hypergraphs but also some hypergraphs derived from them.  Given a $k$-uniform hypergraph $\HH$ on vertex set $[N]$ and a vertex $x\in [N]$ the \emph{link hypergraph} $\HH(x)$ is the $(k-1)$-uniform hypergraph with vertex set $V\setminus \{x\}$ and an edge $e\setminus \{x\}$ for each edge $e\in E(\HH)$.  In the case of a weighted hypergraph $e\setminus \{x\}$ inherits the weight of $e$.

Let us also consider the operation in which each edge is replaced by all of its $j$-element subsets.  Given a $k$-uniform hypergraph $\HH$ and $j\le k$ we write $\HH_j$ for the (weighted) $j$-uniform hypergraph in which each edge is replaced by its $j$-element subsets (with multiplicity).  That is, the edges of $\HH_j$ are the $j$-element subsets which are contained in at least one edge of $\HH$, and the weight associated with an edge $f$ is $|\{e\in E(\HH): f\subseteq e\}|$, the number of edges of $\HH$ which contain it.

We will consider applying Proposition~\ref{prop:known} to link hypergraphs $\HH(x)$ and the hypergraphs $\HH(x)_j$ obtained from them.  Given an element $x\in [N]$ we write $B^{(x)}_{i-1}$ for a uniformly random set of $i-1$ elements of $V(\HH(x))=V(\HH(x)_j)=[N]\setminus \{x\}$.

\begin{corollary}\label{cor:link} Let $2\le r\le k$ and $C$ be integers and let $\eps>0$, then there is a constant $c>0$ such that the following holds.  Let $\HH$ be a $k$-uniform hypergraph with $\Delta_r(\HH)\le C$.  Let $t\ge N^{-(r-1)/(k-1)}$, and let $m=tN$.  
%
There is probability at most $2kN^3\exp(-ct^{(k-1)/(r-1)}N)$ that the inequality
\[
\max_{x\in [N]\setminus B_{i-1}}\, \big|D^{\HH(x)_j}(B_{i-1})\big|\, >\, \eps t^{j} N^{r-1}
\]
occurs for some $i\le m$ and some $1\le j\le k-1$.
\end{corollary}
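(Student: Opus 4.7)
The strategy is to apply Proposition~\ref{prop:known} separately to each weighted hypergraph $\HH(x)_j$, viewed as a $j$-uniform hypergraph on $[N]\setminus\{x\}$, and then to combine the individual estimates via a union bound over $x\in[N]$, $i\le m$, and $j\in\{1,\dots,k-1\}$. Conditional on $x\notin B_{i-1}$, symmetry of the model ensures that $B_{i-1}$ is a uniformly random $(i-1)$-subset of $[N]\setminus\{x\}$, so Proposition~\ref{prop:known} is applicable in principle.

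The main work is choosing a parameter $r''$ to play the role of ``$r$'' in Proposition~\ref{prop:known} and verifying that the resulting maximum degree $\Delta_{r''}(\HH(x)_j)$ is bounded by a constant depending only on $k$ and $C$. \emph{Case $j\ge r-1$:} take $r''=r-1$. A short count shows that for any $(r-1)$-subset $R$ of $[N]\setminus\{x\}$, the weighted degree of $R$ in $\HH(x)_j$ equals $\binom{k-r}{j-r+1}\,d_{\HH}(R\cup\{x\})$, which is at most $\binom{k-r}{j-r+1}\,C$. Proposition~\ref{prop:known} then yields a deviation estimate at level $\eps\,t^j N^{r-1}$ with exponent $\exp(-c\,t^{j/(r-1)}N)$. \emph{Case $j<r-1$:} here $r-1$ exceeds the uniformity $j$, so instead we rescale by the maximal edge-weight $W=\Delta_{j+1}(\HH)\le CN^{r-j-1}$; the rescaled hypergraph $\HH(x)_j/W$ satisfies $\Delta_j\le 1$, and Proposition~\ref{prop:known} applied with $r''=j$ gives a bound with exponent $\exp(-c\,tN)$. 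Multiplying through by $W$ restores the required threshold $\eps\,t^j N^{r-1}$ on the deviation of $\HH(x)_j$.

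In both cases, using $t\le 1$ and $j\le k-1$, the exponent simplifies to at least $c\,t^{(k-1)/(r-1)}N$, and the hypothesis ``$t\ge (N-1)^{-r''/j}$'' required by Proposition~\ref{prop:known} is implied by the Corollary's assumption $t\ge N^{-(r-1)/(k-1)}$. The per-bound factor $2N$ from Proposition~\ref{prop:known}, combined with the union-bound factors $N$ (for $x$), $m\le N$ (for $i$), and $k-1$ (for $j$), then delivers the stated bound $2kN^3\exp(-c\,t^{(k-1)/(r-1)}N)$.

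The main obstacle is Case~2, where the naive choice $r''=j$ fails because $\Delta_j(\HH(x)_j)$ grows polynomially in $N$, which would force the constant in the exponent of Proposition~\ref{prop:known} to vanish; the rescaling trick is what makes this case go through. A secondary technicality is handling those small values of $i$ for which $s'=(i-1)/(N-1)$ lies below the threshold required by Proposition~\ref{prop:known}: these edge cases are either vacuous (when $i-1<j$ forces $D^{\HH(x)_j}(B_{i-1})\equiv 0$) or are dealt with by a direct Markov bound on $L^{\HH(x)_j}(i-1)$, which is already much smaller than the target deviation $\eps\,t^j N^{r-1}$.
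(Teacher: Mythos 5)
Your proposal is correct and follows essentially the same route as the paper: reduce by symmetry to the model $B^{(x)}_{i-1}$, split into the cases $j\ge r-1$ (where $\Delta_{r-1}(\HH(x)_j)$ is bounded by a constant depending only on $k$ and $C$) and $j\le r-2$ (where rescaling the edge weights by a factor $\Theta(N^{r-j-1})$ makes $\Delta_j$ bounded), apply Proposition~\ref{prop:known} in each case, and union bound over $x$, $i$ and $j$. The only cosmetic differences are that the paper divides by $N^{r-j-1}$ exactly rather than by the maximal edge weight $\Delta_{j+1}(\HH)$, and that your ``secondary technicality'' about small $i$ is unnecessary, since Proposition~\ref{prop:known} already bounds the deviation uniformly for all $i\le m$ under a hypothesis involving only the terminal density $t=m/N$.
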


\begin{proof} It clearly suffices to prove that for fixed choices of $i\le m$, $j\le k-1$ and $x\in [N]$ we have that
\[
\pr{x\not\in B_{i-1}\quad \text{and}\quad |D^{\HH(x)_j}(B_{i-1})|\, \ge\, \eps t^{j} N^{r-1}}\, \le\, 2N \exp(-ct^{(k-1)/(r-1)}N)\, .
\]
Let us fix $i\le m,j\le k-1$ and $x\in [N]$.  Since the event requires $x\not\in B_{i-1}$, it suffices to prove
\eq{inlink}
\pr{|D^{\HH(x)_j}(B^{(x)}_{i-1})|\, \ge\, \eps t^{j} N^{r-1}}\, \le \, 2N \exp(-ct^{(k-1)/(r-1)}N)\, 
\eqe
where $B^{(x)}_{i-1}$ is a uniformly random set of $i-1$ elements of $[N]\setminus \{x\}$.

During the proof we will use Proposition~\ref{prop:known}.  To avoid confusion with the parameters $r,k,C,\eps$ of the corollary we shall write $r',k',C'$ and $\eps'$ for the parameters in the condition of the proposition and $c=c(k',C',\eps')$ for the constant which appears in the result.  We consider two cases depending on the value of $j$.  Let us first observe that $\HH(x)$ inherits certain properties from $\HH$.  In particular $\HH(x)$ is $(k-1)$-uniform and has $\Delta_{r-1}\le C$.  In the case of $\HH(x)_j$ we note that $\HH(x)_j$ is $j$-uniform and we have the bounds $\Delta_{r-1}\le C2^k$ ($j\ge r-1$) and $\Delta_j\le C2^kN^{r-j-1}$ ($j\le r-2$).

Case I: If $r-1\le j\le k-1$ then we apply Proposition~\ref{prop:known} with parameters $r'=r-1$, $k'=j$, $C'=C2^k$ and $\eps'=\eps$ to the hypergraph $\HH(x)_j$ to obtain
\[
\pr{|D^{\HH(x)_j}(B^{(x)}_{i-1})|\, \ge\, \eps t^{j} N^{r-1}}\, \le \, 2N \exp(-ct^{j/(r-1)}N)\, \le  \, 2N \exp(-ct^{(k-1)/(r-1)}N)\, ,
\]
as required.

Case II: If $1\le j\le r-2$ then we let $\HH'$ be the weighted hypergraph obtained from $\HH(x)_j$ by dividing all its edge weights by $N^{r-j-1}$.  It then follows that $\HH$ is a $j$-uniform weighted hypergraph with $\Delta_j\le C2^k$.  We apply Proposition~\ref{prop:known} with parameter $r'=j$, $k'=j$, $C'=C2^k$ and $\eps'=\eps$ to the hypergraph $\HH'$ to obtain
\begin{align*}
\pr{|D^{\HH(x)_j}(B^{(x)}_{i-1})|\, \ge\, \eps t^{j} N^{r-1}}\, &= \, \pr{|D^{\HH'}(B^{(x)}_{i-1})|\, \ge\, \eps t^{j} N^{j}}\\
&\le \, 2N \exp(-ctN)\\
&\le \, 2N\exp(-ct^{(k-1)/(r-1)}N)\, ,
\end{align*}
as required.
\end{proof}

\section{Understanding $X_{\ell}(B_i)$ in terms of $X_{1}(B_i)$}\label{sec:ellto1}

In this section we shall see that all the contributions $X_{\ell}(B_i)$ to the martingale increment are (with very high probability) close to a deterministic multiple of $X_1(B_i)$.  Set
\[
Y_{\ell}(B_i)\, :=\, X_{\ell}(B_i)\, -\, \binom{k-1}{\ell - 1} \frac{(i-1)_{\ell-1}}{(N-1)_{\ell-1}}X_{1}(B_i)\, .
\]
The main result of the section is as follows.  We denote by $Y_{\ell}(B_{i-1},\cdot)$ the random variable $Y_{\ell}(B_i)$ conditioned on the first $i-1$ elements.  So that, for example, 
\[
\|Y_{\ell}(B_{i-1},\cdot)\|_{\infty}\, :=\, \max_{x\in [N]\setminus B_{i-1}}\big|Y_{\ell}(B_{i-1}\cup \{x\})\big|\, ,
\]
which is a $B_{i-1}$-measurable quantity.

\begin{prop}\label{prop:Ysmall} Let $2\le r\le k$ and $C$ be integers and let $\eps>0$, then there is a constant $c>0$ such that the following holds.  Let $\HH$ be a $k$-uniform hypergraph with $\Delta_r(\HH)\le C$.  Let $t\ge N^{-(r-1)/(k-1)}$, and let $m=tN$.  Except with probability at most $2kN^3\exp(-ct^{(k-1)/(r-1)}N)$ we have
\[
\|Y_{\ell}(B_{i-1},\cdot)\|_{\infty} \, \le\, \eps t^{\ell-1} N^{r-1}
\]
for all $i\le m$ and all $\ell\le k$.
\end{prop}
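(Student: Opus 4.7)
The plan is to derive Proposition~\ref{prop:Ysmall} as a direct consequence of Corollary~\ref{cor:link}. The key observation is that, conditional on the value of $b_i$, the increment $A_\ell(B_i)$ is exactly a (weighted) edge count in a derived link hypergraph, and its conditional mean is explicit.

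First, for $\ell\ge 2$ and any $x\in [N]\setminus B_{i-1}$, setting $b_i=x$ gives
\[
A_\ell(B_i)\;=\;\sum_{f\in E(\HH),\, f\ni x}\binom{|f\cap B_{i-1}|}{\ell-1}\;=\;N^{\HH(x)_{\ell-1}}(B_{i-1}),
\]
since the sum counts pairs $(T,f)$ with $f\ni x$, $T\subseteq f\setminus\{x\}$, $|T|=\ell-1$ and $T\subseteq B_{i-1}$, which is precisely the (weighted) edge count of the $(\ell-1)$-uniform hypergraph $\HH(x)_{\ell-1}$ introduced just before Corollary~\ref{cor:link}. A direct calculation of the mean yields
\[
L^{\HH(x)_{\ell-1}}(i-1)\;=\;d_\HH(x)\binom{k-1}{\ell-1}\frac{(i-1)_{\ell-1}}{(N-1)_{\ell-1}}\;=\;\alpha_\ell\, d_\HH(x),
\]
where $\alpha_\ell:=\binom{k-1}{\ell-1}(i-1)_{\ell-1}/(N-1)_{\ell-1}$ is exactly the deterministic coefficient appearing in the definition of $Y_\ell$.

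Next I would apply Corollary~\ref{cor:link} with $\eps/2$ in place of $\eps$. On the resulting good event, whose failure probability is at most $2kN^3\exp(-c\,t^{(k-1)/(r-1)}N)$, we have simultaneously for every $i\le m$, every $\ell\in\{2,\dots,k\}$, and every $x\in[N]\setminus B_{i-1}$
\[
\bigl|A_\ell(B_i)-\alpha_\ell\, d_\HH(x)\bigr|\;=\;\bigl|D^{\HH(x)_{\ell-1}}(B_{i-1})\bigr|\;\le\;(\eps/2)\,t^{\ell-1}N^{r-1}
\]
(on the event $\{b_i=x\}$). Since this bound is $B_{i-1}$-measurable and holds pointwise in the choice of $b_i$, averaging over $b_i$ (uniform on $[N]\setminus B_{i-1}$) gives the same bound on the quantity $|\Ex{A_\ell(B_i)\,|\,B_{i-1}}-\alpha_\ell \Ex{d_\HH(b_i)\,|\,B_{i-1}}|$. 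Subtracting the two estimates yields $|Y_\ell(B_{i-1}\cup\{x\})|\le \eps t^{\ell-1}N^{r-1}$ for every $x\in[N]\setminus B_{i-1}$, which is the desired supremum bound on $Y_\ell(B_{i-1},\cdot)$. The case $\ell=1$ is trivial since $Y_1\equiv 0$.

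The only genuinely non-routine ingredient is the clean identification $A_\ell(B_i)=N^{\HH(b_i)_{\ell-1}}(B_{i-1})$ together with the explicit formula for its mean; the final estimate is then a one-line invocation of Corollary~\ref{cor:link}, which crucially provides the deviation bound uniformly over all possible values of $b_i$.
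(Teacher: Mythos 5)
Your proof is correct and matches the paper's approach essentially exactly: both identify $A_\ell(B_{i-1}\cup\{x\})=N^{\HH(x)_{\ell-1}}(B_{i-1})$, recognise the coefficient $\binom{k-1}{\ell-1}(i-1)_{\ell-1}/(N-1)_{\ell-1}$ as arising from $L^{\HH(x)_{\ell-1}}(i-1)=\alpha_\ell d_\HH(x)$ so that the pointwise quantity equals $D^{\HH(x)_{\ell-1}}(B_{i-1})$, and then invoke Corollary~\ref{cor:link} together with a triangle-inequality/averaging step (your $\eps/2$ playing the role of the paper's factor~$2$). The $\ell=1$ case is indeed vacuous since $Y_1\equiv 0$.
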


The alert reader will notice the similarity with our bound, Corollary~\ref{cor:link}, on deviations in link hypergraphs $\HH(x)$ and the related hypergraphs $\HH(x)_j$.  This is no coincidence.  Let $E(\eps)$ be the event that 
\[
\max_{x\in [N]\setminus B_{i-1}}\, \big|D^{\HH(x)_j}(B_{i-1})\big|\, \le\, \eps t^{j} N^{r-1}/2
\]
for all $i\le m$ and $1\le j\le k-1$.  By Corollary~\ref{cor:link} we have $\pr{E(\eps)}\ge 1-2kN^3\exp(-ct^{(k-1)/(r-1)}N)$.  And so it suffices to prove that 
\eq{stpy}
 \|Y_{\ell}(B_{i-1},\cdot)\|_{\infty}\, \le\, 2\max_{x\in [N]\setminus B_{i-1}}|D^{\HH(x)_{\ell-1}}(B_{i-1})|\qquad a.s.
\eqe
This is the approach we will take to proving Proposition~\ref{prop:Ysmall}.

\begin{proof}[Proof of Proposition~\ref{prop:Ysmall}] As discussed above it suffices to prove~\eqr{stpy}.  Let us recall the definition of $Y_{\ell}(B_i)$ and expand.  We obtain
\begin{align*}
Y_{\ell}(B_i)\,& =\, X_{\ell}(B_i)\, -\, \binom{k-1}{\ell - 1} \frac{(i-1)_{\ell-1}}{(N-1)_{\ell-1}}X_{1}(B_i)\phantom{\bigg|}\\
&=\, A_{\ell}(B_i)\, -\, \Ex{A_{\ell}(B_i)|B_{i-1}}\phantom{\bigg|}\\
&\qquad  -\, \binom{k-1}{\ell - 1} \frac{(i-1)_{\ell-1}}{(N-1)_{\ell-1}}A_1(B_i)\, +\, \binom{k-1}{\ell - 1} \frac{(i-1)_{\ell-1}}{(N-1)_{\ell-1}}\Ex{A_{1}(B_i)|B_{i-1}} \phantom{\bigg|}\\
&=\, T_{\ell}(B_i)\, -\, \Ex{T_{\ell}(B_i)|B_{i-1}}\phantom{\bigg|}
\end{align*}
where 
\[
T_{\ell}(B_i) \, :=\, A_{\ell}(B_i)\, -\, \binom{k-1}{\ell - 1} \frac{(i-1)_{\ell-1}}{(N-1)_{\ell-1}}A_{1}(B_i)\, .
\]
Using the notation $T_{\ell}(B_{i-1},\cdot)$ for the random variable $T_{\ell}(B_i)$ conditioned on the first $i-1$ elements, it clearly suffices to prove that
\[
\|T_{\ell}(B_{i-1},\cdot)\|_{\infty}\, \le\, \max_{x\in [N]\setminus B_{i-1}}|D^{\HH(x)_{\ell-1}}(B_{i-1})|\, \qquad a.s.
\]
In fact we shall prove that $T_{\ell}(B_{i-1}\cup \{x\})=D^{\HH(x)_{\ell-1}}(B_{i-1})$ for all $x\in [N]\setminus B_{i-1}$.  We observe that 
\begin{align*}
D^{\HH(x)_{\ell-1}}(B_{i-1})\,  &=\, N^{\HH(x)_{\ell-1}}(B_{i-1})\, -\, \Ex{N^{\HH(x)_{\ell-1}}(B^{(x)}_{i-1})}\phantom{\Bigg|}\\
&=\,  N^{\HH(x)_{\ell-1}}(B_{i-1})\, -\, \frac{(i-1)_{\ell-1}}{(N-1)_{\ell-1}} \binom{k-1}{\ell - 1}e(\HH(x))\phantom{\Bigg|}\\
&=\,N^{\HH(x)_{\ell-1}}(B_{i-1})\, -\, \binom{k-1}{\ell - 1} \frac{(i-1)_{\ell-1}}{(N-1)_{\ell-1}}A_{1}(B_{i-1}\cup \{x\})\, .\phantom{\Bigg|}
\end{align*}
And so it remains only to prove that $A_{\ell}(B_{i-1}\cup \{x\})= N^{\HH(x)_{\ell-1}}(B_{i-1})$.  Observe that $A_{\ell}(B_{i-1}\cup\{x\})$ counts the number of pairs $(S,e)$ with
\begin{enumerate}
\item[(i)] $S$ is an $\ell$-element subset of $[N]$ with $x\in S$
\item[(ii)]  $e\in E(\HH)$ 
\item[(iii)] $S\subseteq e$
\item[(iv)] $S\setminus \{x\}\subseteq B_{i-1}$.
\end{enumerate}
The conditions (i)-(iii) correspond to $(\ell-1)$-sets $S'=S\setminus \{x\}$ in edges $e\setminus \{x\}$ of the link hypergraph $\HH(x)$.  In this context, condition (iv) asserts that $S'\subseteq  B_{i-1}$.  It follows that $A_{\ell}(B_{i-1}\cup\{x\})\, =\, N^{\HH(x)_{\ell-1}}(B_{i-1})$, as required.
\end{proof}

\section{Approximating $D^{\HH}(B_m)$ by $\Lambda^{\HH}(B_m)$}\label{sec:approx}

As we discussed in the introduction, a major task on our path to proving Theorem~\ref{thm:main} involves approximating $D^{\HH}(B_m)$ by $\Lambda^{\HH}(B_m)$, which depends only on the degrees of the vertices selected for the set $B_m$.  We may now state explicitly the sense in which $\Lambda^{\HH}(B_m)$ approximates $D^{\HH}(B_m)$.  We use the notation $\omega(1)$ for a function which tends to $\infty$ as $N\to \infty$.

\begin{prop}\label{prop:approx} 
Let $2\le r\le k$ and $C$ be integers.  Let $\HH_N$ be a sequence of (weighted) $k$-uniform hypergraphs with $V(\HH_N) = [N]$, $\Delta_r(\HH_N)\le C$.  Let $t=m/N$ satisfy $(\log{N}/N)^{(r-1)/(k-1)}\ll t \le 1/2$ and let $\alpha_N$ be a sequence such that $\alpha_N\, \ll\, t^{k-1/2+(k-1)/2(r-1)}N^r$ \, .
Then
\[
\pr{\big|D^{\HH_N}(B_m)-\Lambda^{\HH_N}(B_m)\big|\,  \ge\, \alpha_N}\, \le \, \exp \left(  \frac{-\omega(1)\alpha_N^2}{t^{2k-1}N^{2r-1}} \right)\, .
\]
\end{prop}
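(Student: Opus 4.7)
The plan is to substitute the approximation of Section~\ref{sec:ellto1} into the martingale representation of Lemma~\ref{lem:Mart}, identify the $X_1^{\HH}$-contribution with $\Lambda^{\HH}(B_m)$ up to a negligible deterministic error, and control the leftover $Y_\ell^{\HH}$-martingale via the Hoeffding--Azuma inequality combined with the almost sure bound on $\|Y_\ell^{\HH}(B_{i-1},\cdot)\|_\infty$ from Proposition~\ref{prop:Ysmall}.

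Writing $c_{i,\ell}:=\frac{(N-m)_\ell(m-i)_{k-\ell}}{(N-i)_k}$ and substituting $X_\ell^{\HH}(B_i)=Y_\ell^{\HH}(B_i)+\binom{k-1}{\ell-1}\frac{(i-1)_{\ell-1}}{(N-1)_{\ell-1}}X_1^{\HH}(B_i)$ (with $Y_1^{\HH}\equiv 0$) into Lemma~\ref{lem:Mart} gives
\[
D^{\HH}(B_m)\,=\,\sum_{i=1}^{m}\sum_{\ell=2}^{k}c_{i,\ell}\,Y_\ell^{\HH}(B_i)\,+\,\sum_{i=1}^{m}c_i^{\ast}\,X_1^{\HH}(B_i),
\]
where $c_i^{\ast}:=\sum_{\ell=1}^{k}c_{i,\ell}\binom{k-1}{\ell-1}\frac{(i-1)_{\ell-1}}{(N-1)_{\ell-1}}$. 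Setting $s:=i/N\le t\le 1/2$ (so $1-s,\,1-t\ge 1/2$), each falling factorial equals its pure power up to a factor $1+O(1/N)$, and the binomial theorem together with the identity $(1-t)s+(t-s)=t(1-s)$ yields $c_i^{\ast}=(1+O(1/N))\,\tfrac{(1-t)t^{k-1}}{1-s}$. Writing $R_1:=\sum_{i=1}^{m}\sum_{\ell=2}^{k}c_{i,\ell}Y_\ell^{\HH}(B_i)$ and $R_2:=\sum_{i=1}^{m}\bigl(c_i^{\ast}-\tfrac{(1-t)t^{k-1}}{1-s}\bigr)X_1^{\HH}(B_i)$, we thus have $D^{\HH}(B_m)-\Lambda^{\HH}(B_m)=R_1+R_2$; the crude bound $\|X_1^{\HH}\|_\infty\le 2\Delta_1(\HH)=O(N^{r-1})$ gives $|R_2|=O(t^kN^{r-1})$, which is $o(\alpha_N)$ whenever the claimed bound is non-trivial.

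For $R_1$, which is a martingale with respect to $\F_i:=\sigma(b_1,\dots,b_i)$, set $\delta_N:=\alpha_N\big/\bigl(t^{k-1/2+(k-1)/(2(r-1))}N^r\bigr)\to 0$ and take $\eps_N:=\delta_N^{1/2}\to 0$. Introduce the stopping time $\sigma:=\min\{j\ge 0:\|Y_\ell^{\HH}(B_j,\cdot)\|_\infty>\eps_N t^{\ell-1}N^{r-1}\text{ for some }\ell\ge 2\}\wedge m$. Tracking that the constant $c(\eps)$ in Proposition~\ref{prop:Ysmall} can be taken of order $\eps^2$ (from the $\eps^2$-dependence in the moment and Janson bounds inside Proposition~\ref{prop:known}, propagated through Corollary~\ref{cor:link}) gives $\Pr[\sigma<m]\le 2kN^3\exp(-c_0\eps_N^{2}t^{(k-1)/(r-1)}N)$. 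On the complementary event each increment of $R_1$ is bounded by $\sum_{\ell=2}^{k}|c_{i,\ell}|\eps_N t^{\ell-1}N^{r-1}=O(\eps_N t^{k-1}N^{r-1})$ (using $|c_{i,\ell}|=O(t^{k-\ell})$), so the Hoeffding--Azuma inequality (Lemma~\ref{lem:HA}) applied to the stopped martingale $R_1^{\sigma}$ with sum of squares $O(\eps_N^{2}t^{2k-1}N^{2r-1})$ yields $\Pr[|R_1^{\sigma}|\ge\alpha_N/2]\le 2\exp(-c\alpha_N^{2}/(\eps_N^{2}t^{2k-1}N^{2r-1}))$. Since $\eps_N^{2}=\delta_N$ gives both $1/\eps_N^{2}=\delta_N^{-1}\to\infty$ and $\eps_N^{2}t^{(k-1)/(r-1)}N=\delta_N^{-1}\cdot\alpha_N^{2}/(t^{2k-1}N^{2r-1})$, both probabilities are of the form $\exp(-\omega(1)\alpha_N^{2}/(t^{2k-1}N^{2r-1}))$ (the $N^3$ prefactor is absorbed via $t\gg(\log N/N)^{(r-1)/(k-1)}$), which combined with $|R_2|=o(\alpha_N)$ completes the proof. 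The main obstacle is this simultaneous balance: shrinking $\eps_N$ tightens Hoeffding--Azuma but loosens Proposition~\ref{prop:Ysmall}'s control of $\|Y_\ell^{\HH}\|_\infty$, and the choice $\eps_N=\delta_N^{1/2}$ is tuned so that both exponents diverge at the required rate.
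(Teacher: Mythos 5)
Your decomposition $D-\Lambda = R_1+R_2$ is exactly the paper's (its $S^{(1)},S^{(2)}$), and for $R_1$ your stopped-martingale plus Hoeffding--Azuma argument is the same truncation idea the paper uses via $Y^*_\ell$. However, two steps in your write-up are not sound as stated.

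First, the claim that ``each falling factorial equals its pure power up to a factor $1+O(1/N)$'', and the ensuing $c_i^*=(1+O(1/N))\tfrac{(1-t)t^{k-1}}{1-s}$, is incorrect. The factor $(m-i)_{k-\ell}$ is \emph{not} $(m-i)^{k-\ell}\bigl(1+O(1/N)\bigr)$: for $i$ near $m$ (say $m-i<k-\ell$) the falling factorial vanishes while the power does not, and even for $m-i\gg 1$ the relative error is $O(1/(m-i))$, which is $O(1/(tN))$ rather than $O(1/N)$ when $t\to 0$. The same issue affects $(i-1)_{\ell-1}$ for small $i$. The paper's Lemma~\ref{lem:kappadiff} avoids this by expanding additively, obtaining the (larger, but correct) bound $|\kappa(i,m)-\kappa'(i,m)|=O(t^{k-2}N^{-1})$. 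Moreover, because the paper treats $S^{(1)}$ as a martingale and applies Hoeffding--Azuma (increments $O(t^{k-2}N^{r-2})$), it obtains the unconditional bound $\exp(-\alpha_N^2/(2C_1^2t^{2k-3}N^{2r-3}))=\exp(-\omega(1)\alpha_N^2/t^{2k-1}N^{2r-1})$ for \emph{every} admissible $\alpha_N$; your crude deterministic bound $|R_2|=O(t^kN^{r-1})$ only gives $|R_2|=o(\alpha_N)$ when $\alpha_N$ is at least of the order of $t^{k-1}N^{r-1}$, leaving the smaller end of the allowed range hanging on your informal ``whenever the claimed bound is non-trivial'' caveat.

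Second, the explicit choice $\eps_N=\delta_N^{1/2}$ forces you to know how the constant $c(\eps)$ of Proposition~\ref{prop:Ysmall} degenerates as $\eps\to 0$. Your claim $c(\eps)=\Theta(\eps^2)$ is asserted but not derived; tracing through Lemma~\ref{lem:moment} ($c<\eps^2/8Ck^{k+1}$) and the Markov step $(1+\eps/2C)^\ell/(1+\eps/C)^\ell$ in Proposition~\ref{prop:known} in fact gives something closer to $\Theta(\eps^3)$ on the upper-tail side. Your argument survives this change (since $\delta_N^{3/2}\gg\delta_N^2$), but the verification is missing, and you also need to check that $\eps_N^{O(1)}t^{(k-1)/(r-1)}N\gg\log N$ to absorb the $N^3$ prefactor, which is \emph{not} automatic from $t\gg(\log N/N)^{(r-1)/(k-1)}$ alone when $\delta_N$ decays quickly. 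The paper sidesteps all of this by fixing $\eps>0$, obtaining for each $\eps$ the bound $\exp(-\alpha_N^2/2\eps^2k^2t^{2k-1}N^{2r-1})$ plus a negligible error for all large $N$, and then letting $\eps\to 0$ by a diagonalisation; this is cleaner and genuinely handles the entire stated range of $\alpha_N$.
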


To lighten the notation we drop $N$ from the notation $\HH_N$ in this section.  

Let us now give an idea of the proof of Proposition~\ref{prop:approx}.  We recall that we have a martingale expression for each of 
\[
D^{\HH}(B_m)\, =\,  \sum_{i=1}^{m} \sum_{\ell=1}^{k} \frac{(N-m)_{\ell}(m-i)_{k-\ell}}{(N-i)_k}\, \,  X_{\ell}^{\HH}(B_i)
\]
and we may express
\[
\Lambda^{\HH}(B_m)\, =\, \sum _{i=1}^{m} \kappa'(i,m)\, \,X^{\HH}_1(B_i)\, ,
\]
where $\kappa'(i,m):=\frac{t^{k-1}(1-t)}{1-s}$.  We will now express $D^{\HH}(B_m)$ in a similar form.  We recall the random variables
\[
Y_{\ell}(B_i)\, :=\, X_{\ell}(B_i)\, -\, \binom{k-1}{\ell - 1} \frac{(i-1)_{\ell-1}}{(N-1)_{\ell-1}}\, \, X_{1}(B_i)
\]
defined in the previous section and note that
\begin{align*}
D^{\HH}(B_m)\, & =\, \sum _{i=1}^{m} \kappa(i,m)\, \,X^{\HH}_1(B_i)\\
& +\, \sum_{i=1}^{m} \sum_{\ell=1}^{k}\frac{(N-m)_{\ell}(m-i)_{k-\ell}}{(N-i)_k}Y_{\ell}(B_i)\, 
\end{align*}
where 
\[
\kappa(i,m)\, :=\, \sum_{\ell=1}^{k}\frac{(N-m)_{\ell}(m-i)_{k-\ell}}{(N-i)_k}\binom{k-1}{\ell - 1} \frac{(i-1)_{\ell-1}}{(N-1)_{\ell-1}}\, .
\]
And so the difference between $D^{\HH}(B_m)$ and $\Lambda^{\HH}(B_m)$ may be expressed as:
\begin{align}
D^{\HH}(B_m)\, -\, \Lambda^{\HH}(B_m)\, &=\, \sum _{i=1}^{m} \big(\kappa(i,m)-\kappa'(i,m) \big)\, \,X^{\HH}_1(B_i)\label{eq:diff}\\
& +\, \sum_{i=1}^{m} \sum_{\ell=1}^{k}\frac{(N-m)_{\ell}(m-i)_{k-\ell}}{(N-i)_k}Y_{\ell}(B_i)\, .\nonumber
\end{align}

The following lemma will be useful.

\begin{lemma}\label{lem:kappadiff}
For all $i\le m\le N/2$ we have 
\[
\big|\kappa(i,m)\, -\, \kappa'(i,m)|\, \le\, O(t^{k-2}N^{-1})\, .
\]
\end{lemma}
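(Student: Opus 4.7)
My plan is to decompose $\kappa(i,m)$ term by term, identify its leading-order part
\[
P_\ell \,:=\, \binom{k-1}{\ell-1}\frac{(1-t)^\ell (t-s)^{k-\ell} s^{\ell-1}}{(1-s)^k},
\]
verify the clean identity $\sum_{\ell=1}^k P_\ell = \kappa'(i,m)$, and then bound the $\ell$-th error by $O_k(t^{k-2}/N)$. Summing the $k$ errors then yields the lemma.

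For the identity, substituting $j=\ell-1$ gives
\[
\sum_{\ell=1}^k P_\ell \,=\,\frac{1-t}{(1-s)^k}\sum_{j=0}^{k-1}\binom{k-1}{j}(s(1-t))^j(t-s)^{k-1-j}\,=\,\frac{(1-t)\big(s(1-t)+(t-s)\big)^{k-1}}{(1-s)^k},
\]
and the algebraic identity $s(1-t)+(t-s)=t(1-s)$ collapses this to $t^{k-1}(1-t)/(1-s)=\kappa'(i,m)$.

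For the per-term error bound, the assumption $t\le 1/2$ ensures $1-t, 1-s \ge 1/2$, so the factors $(N-m)_\ell$, $(N-i)_k$ and $(N-1)_{\ell-1}$ each equal their leading $N$-power times $1+O_k(1/N)$ with uniformly bounded relative error. The delicate factor is the product $(m-i)_{k-\ell}(i-1)_{\ell-1}$. Viewed as a polynomial in $N$ of degree $k-1$ with $t,s$ as parameters, its leading coefficient is $(t-s)^{k-\ell}s^{\ell-1}$; every lower-order coefficient is bounded in absolute value by $C_k t^{k-2}$, because each subleading monomial arises from ``losing'' one factor of either $m-i\le tN$ or $i-1\le tN$ relative to the leading. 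Hence
\[
(m-i)_{k-\ell}(i-1)_{\ell-1}\,=\,N^{k-1}(t-s)^{k-\ell}s^{\ell-1}\,+\,O_k(N^{k-2}t^{k-2}),
\]
and combining this with the multiplicative $(1+O_k(1/N))$ corrections from the other factors (and noting $P_\ell=O_k(t^{k-1})$) shows that the $\ell$-th summand of $\kappa(i,m)$ equals $P_\ell+O_k(t^{k-2}/N)$. Note that the boundary cases $\ell=1$ and $\ell=k$, in which one of $(m-i)_{k-\ell}$ or $(i-1)_{\ell-1}$ reduces to $1$, require no special treatment.

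The main obstacle is tracking the correct $t$-dependence of the error. A naive ``$(1+O(1/N))$ relative error'' argument works only when $t$ is bounded away from zero, but here $t$ may vanish as $N\to\infty$; the resolution is to expand $(m-i)_{k-\ell}(i-1)_{\ell-1}$ directly as a polynomial in $N$ and bound each subleading coefficient in absolute terms, noting that dropping a factor costs at most $\max(m-i, i-1) \le tN$, so that after dividing by $N^{k-1}$ we obtain the claimed $O_k(t^{k-2}/N)$ bound uniformly in $\ell$.
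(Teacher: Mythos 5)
Your proof takes essentially the same route as the paper's: rewrite $\kappa'(i,m)$ via the binomial theorem as a sum over $\ell$, compare it with $\kappa(i,m)$ term by term, and expand the falling factorials. You are somewhat more careful than the paper about the uniformity of the error in $t$ and $s$ -- the paper's relation $(i-1)_{\ell-1}=(1+O(N^{-1}))s^{\ell-1}N^{\ell-1}$ does not hold uniformly when $s$ is small, a wrinkle you correctly resolve by expanding $(m-i)_{k-\ell}(i-1)_{\ell-1}$ directly as a polynomial in $N$ and bounding the subleading coefficients in absolute terms by $O_k(t^{k-2})$.
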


\begin{proof} It is easily observed (binomial theorem) that
\[
\kappa'(i,m)\, =\, \sum_{\ell=1}^{k}\frac{(1-t)^{\ell}(t-s)^{k-\ell}}{(1-s)^k}\binom{k-1}{\ell - 1} s^{\ell-1}\, .
\]
So that it suffices to prove that
\[
\left|\frac{(N-m)_{\ell}(m-i)_{k-\ell}}{(N-i)_k}\frac{(i-1)_{\ell-1}}{(N-1)_{\ell-1}}\, -\, \frac{(1-t)^{\ell}(t-s)^{k-\ell}}{(1-s)^k}s^{\ell-1}\right|\, =\, O(t^{k-2}N^{-1})\, .
\]
This is now straightforward since the terms in the first expression are $(N-m)_{\ell}=(1-t+O(N^{-1}))^{\ell}N^{\ell}$, $(m-i)_{k-\ell}=(t-s+O(N^{-1}))^{k-\ell}N^{k-\ell}$, $(N-i)_k=(1-s+O(N^{-1}))^{k}N^k$, $(i-1)_{\ell-1}=(1+O(N^{-1}))s^{\ell-1}N^{\ell-1}$ and $(N-1)_{\ell-1}=(1+O(N^{-1}))N^{\ell-1}$.  When we multiply out, the main term is $(1-t)^{\ell}(t-s)^{k-\ell}s^{\ell-1}/(1-s)^k$ and all remaining terms are $O(t^{k-2}N^{-1})$, as required.
\end{proof}

We now prove Proposition~\ref{prop:approx}.

\begin{proof}[Proof of Proposition~\ref{prop:approx}] Let $S^{(1)}(m)$ and $S^{(2)}(m)$ denote the two terms on the right hand side of~\eqr{diff}.  By the triangle inequality, the union bound, and the expression~\eqr{diff} for the difference $D^{\HH}(B_m)-\Lambda^{\HH}(B_m)$ it suffices to prove
\[
\pr{S^{(i)}(m)\,  \ge\, \alpha_N}\, \le \, \exp \left(  \frac{-\omega(1)\alpha_N^2}{t^{2k-1} N^{2r-1}} \right)\,
\]
for $i=1,2$ and all $t, \alpha_N$ satisfying the conditions.

We begin with $S^{(1)}(m)$.  Setting $\kappa^{\Delta}(i,m)=\kappa(i,m)-\kappa'(i,m)$ we have that $S^{(1)}(m)=\sum_{i=1}^{m}\kappa^{\Delta}(i,m)\, X^{\HH}_1(B_i)$ and, by Lemma~\ref{lem:kappadiff}, $\kappa^{\Delta}=O(t^{k-2}N^{-1})$.  This will allow us to bound the maximum change and the quadratic variation of the martingale $S^{(1)}(j):j=0,\dots ,m$ defined by $S^{(1)}(j)=\sum_{i=1}^{j}\kappa^{\Delta}(i,m)\, X^{\HH}_1(B_i)$.  In particular, as $|X^{\HH}_1(B_i)|\le \Delta_1(\HH)\le CN^{r-1}$ the increments of the martingale have absolute value at most $C_1t^{k-2} N^{r-2}$ almost surely.
By the Hoeffding-Azuma inequality (Lemma~\ref{lem:HA}) we obtain
\begin{align*}
\pr{S^{(1)}\,  \ge\, \alpha_N}\, & \le \, \exp \left(  \frac{-\alpha_N^2}{2m C_1^2 t^{2k-4}N^{2r-4}} \right)\\
&=\, \exp \left(  \frac{-\alpha_N^2}{2C_1^2 t^{2k-3}N^{2r-3}} \right)\\
&\le \, \exp \left(  \frac{-\omega(1)\alpha_N^2}{t^{2k-1} N^{2r-1}} \right)\, ,
\end{align*}
where we have used here that $t\gg (\log{N}/N)^{(r-1)/(k-1)}\gg N^{-1}$.

Let $\eps>0$.  We now consider
\[
S^{(2)}(m)\, :=\, \sum_{i=1}^{m} \sum_{\ell=1}^{k}\frac{(N-m)_{\ell}(m-i)_{k-\ell}}{(N-i)_k}Y_{\ell}(B_i)\, .
\]
It will be convenient to work with
\[
Y^{*}_{\ell}(B_i)\, :=\, Y_{\ell}(B_i)1_{\|Y_{\ell}(B_{i-1},\cdot)\|_{\infty} \, \le\, \eps t^{\ell-1} N^{r-1} }\, .
\]
This ensures that $\|Y^{*}_{\ell}(B_i)\|_{\infty}\, \le\, \eps t^{\ell-1} N^{r-1}$ almost surely.  Also, by Proposition~\ref{prop:Ysmall}, we have that $Y^{*}_{\ell}(B_i)=Y_{\ell}(B_i)$ for all $i\le m$ and $\ell\le k$, except with probability at most $O(N^3)\exp(-\Omega(t^{(k-1)/(r-1)}N))$.  It follows that 
\[
S^{(2),*}(m)\, :=\, \sum_{i=1}^{m} \sum_{\ell=1}^{k}\frac{(N-m)_{\ell}(m-i)_{k-\ell}}{(N-i)_k}Y^{*}_{\ell}(B_i)\, 
\]
is equal to $S^{(2)}(m)$ except with probability at most $O(N^3)\exp(-\Omega(t^{(k-1)/(r-1)}N))$.  Since the coefficient of $Y^{*}_{\ell}(B_i)$ is at most $t^{k-\ell}$ it is easily checked that the increments of the martingale $S^{(2),*}(m)$ are all at most $\eps kt^{k-1}N^{r-1}$ almost surely.
We now apply the Hoeffding-Azuma inequality to obtain
\begin{align*}
\pr{S^{(2)}(m)\,  \ge\, \alpha_N}\, & \le \, \pr{S^{(2),*}(m)\,  \ge\, \alpha_N}\, +\, O(N^3)\exp(-\Omega(t^{(k-1)/(r-1)}N))\\
& \le \, \exp\left(\frac{-\alpha_N^2}{2m(\eps k t^{k-1}N^{r-1})^2}\right)\, +\, O(N^3)\exp(-\Omega(t^{(k-1)/(r-1)}N))\\
& \le \, \exp\left(\frac{-\alpha_N^2}{2\eps^2 k^2t^{2k-1}N^{2r-1}}\right)\, +\, O(N^3)\exp(-\Omega(t^{(k-1)/(r-1)}N))\, .
\end{align*}
Since $\eps$ is arbitrary, and the second probability is much smaller (by the lower bound on $t$ and the upper bound on $\alpha_N$), this completes the proof.
\end{proof}

\section{The quadratic variation of the process}\label{sec:var}

Given a hypergraph $\HH$ we may write $X^{\HH}(B_i)$ (or simply $X(B_i)$) for the martingale increment
\[
X^{\HH}(B_i) \, :=\,  \frac{t^{k-1}(1-t)}{1-s}\, X^{\HH}_1(B_i)
\]
of the $\Lambda^{\HH}(B_i)$ process.

Since our eventual aim is to control deviation probabilities using Freedman's inequality, the behaviour of the quadratic variation
\[
V^{\HH}(m)\, :=\, \sum_{i=1}^{m}\left(\frac{t^{k-1}(1-t)}{1-s}\right)^2 \, \Ex{X^{\HH}(B_i)^2|B_{i-1}}
\]
of the process $\Lambda^{\HH}(B_m)$ is of particular importance.  We prove the following.

\begin{prop}\label{prop:var}
Let $2\le r\le k$ and $C$ be integers and let $\eps>0$, then there is a constant $c>0$ such that the following holds.  Let $\HH$ be a $k$-uniform hypergraph with $\Delta_r(\HH)\le C$.  Let $t\le 1/2$, and let $m=tN$.  Then, except with probability at most $4N^2\exp(-ctN)$, we have
\[
\left|V^{\HH}(m)\, -\, t^{2k-1}(1-t)\sigma^2(\HH) N\right| \, \le\, \eps t^{2k-1}N^{2r-1}\, .
\]
\end{prop}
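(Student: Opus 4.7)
The plan is to compute $\Ex{X_1^{\HH}(B_i)^2 \mid B_{i-1}}$ as the conditional variance of the degree of a uniformly random vertex in $[N]\setminus B_{i-1}$, show this quantity is close to $\sigma^2(\HH)$ uniformly in $i\le m$ via Hoeffding--Azuma, and then evaluate the weighted sum via a Riemann-sum estimate.

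Conditional on $B_{i-1}$, the vertex $b_i$ is uniform on $[N]\setminus B_{i-1}$, so setting $\mu_i := (N-i+1)^{-1}\sum_{x\notin B_{i-1}} d_{\HH}(x)$ we have
\[
\Ex{X_1^{\HH}(B_i)^2 \mid B_{i-1}} \;=\; \frac{1}{N-i+1}\sum_{x\notin B_{i-1}}\bigl(d_{\HH}(x)-\mu_i\bigr)^2 \;=\; T_1(B_{i-1}) - \bigl(\mu_i - \bar{d}(\HH)\bigr)^2,
\]
where $T_1(B_{i-1}) := (N-i+1)^{-1}\sum_{x\notin B_{i-1}}(d_{\HH}(x)-\bar{d}(\HH))^2$, by the identity $\Var(Y) = \Ex{(Y-c)^2} - (\Ex{Y}-c)^2$ with $c=\bar{d}(\HH)$. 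Both $T_1(B_{i-1})$ and $\mu_i$ are functions of the uniformly random set $B_{i-1}$, and I would apply Hoeffding--Azuma (in McDiarmid's permutation form) to the Doob martingale exposing $b_1,\ldots,b_{i-1}$ one step at a time. Since $\Delta_1(\HH) \le CN^{r-1}$, the per-step increments of the two martingales are bounded by $C^2 N^{2r-2}$ and $CN^{r-1}$ respectively, yielding (for any $\eta > 0$ and some $c_1 = c_1(C) > 0$, after using $N-i+1 \ge N/2$) the bounds
\[
\Pr\bigl[|T_1(B_{i-1}) - \sigma^2(\HH)| \ge \eta N^{2r-2}\bigr] \le 2\exp(-c_1\eta^2 N/t),\quad \Pr\bigl[|\mu_i - \bar{d}(\HH)| \ge \eta N^{r-1}\bigr] \le 2\exp(-c_1\eta^2 N/t).
\]
A union bound over $i\le m$ then gives $|\Ex{X_1^{\HH}(B_i)^2 \mid B_{i-1}} - \sigma^2(\HH)| \le 2\eta N^{2r-2}$ uniformly in $i$, except with probability at most $4N^2 \exp(-ctN)$ for a suitable $c = c(\eps,C) > 0$.

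On this good event,
\[
V^{\HH}(m) \;=\; t^{2k-2}(1-t)^2 \sigma^2(\HH) \sum_{i=1}^m (1-i/N)^{-2} \,+\, \mathcal{E},
\]
with $|\mathcal{E}| \le 2\eta\, t^{2k-2}(1-t)^2 N^{2r-2} \sum_{i=1}^m (1-i/N)^{-2}$. A direct Riemann-sum evaluation gives $\sum_{i=1}^m (1-i/N)^{-2} = tN/(1-t) + O(1)$, so the main term equals $t^{2k-1}(1-t)\sigma^2(\HH) N$ up to a negligible $O(t^{2k-2}N^{2r-2})$ and $|\mathcal{E}| \le 2\eta\, t^{2k-1}(1-t) N^{2r-1}$; choosing $\eta := \eps/4$ absorbs all errors into the target $\eps t^{2k-1}N^{2r-1}$. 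The only subtle point is the uniformity of concentration in $i$, but since each per-step Azuma bound is of order $\exp(-\Omega(N/t))$ and $t \le 1/2$, the union bound over the at most $tN \le N/2$ values of $i$ loses only a polynomial factor, comfortably fitting into the allowed $4N^2 \exp(-ctN)$. No deeper input is required beyond Hoeffding--Azuma and the bound $\Delta_1(\HH) = O(N^{r-1})$ implied by $\Delta_r(\HH) \le C$.
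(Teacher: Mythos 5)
Your argument is correct and proves the proposition, but it takes a genuinely different route at the key concentration step. Both you and the paper reduce matters to showing that $\Ex{X_1^{\HH}(B_i)^2\mid B_{i-1}}$ is within $O(\eps)N^{2r-2}$ of $\sigma^2(\HH)$ uniformly over $i\le m$ (your conditional-variance identity $T_1(B_{i-1})-(\mu_i-\bar{d}(\HH))^2$ is an algebraic rearrangement of the paper's $\Ex{A_1(B_i)^2\mid B_{i-1}}-\Ex{A_1(B_i)\mid B_{i-1}}^2$), and both then finish with the same Riemann-sum evaluation of $\sum_{i\le m}\bigl(t^{k-1}(1-t)/(1-s)\bigr)^2 = (1+o(1))t^{2k-1}(1-t)N$. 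Where you diverge is in concentrating the two degree sums: the paper (Lemma~\ref{lem:var}) packages them as deviations $D^{\HH'_{\mathrm{deg}}}(B_{i-1})$ and $D^{\HH'_{\mathrm{deg},2}}(B_{i-1})$ of auxiliary weighted $1$-uniform hypergraphs and invokes Proposition~\ref{prop:known} (Janson's inequality for the lower tail, a moment bound for the upper), reusing machinery built mainly for Corollary~\ref{cor:link}; you instead apply Hoeffding--Azuma (permutation/Doob form) directly. For $1$-uniform hypergraphs there are no edge overlaps, so the elementary route is fully adequate, arguably cleaner, and does not need Proposition~\ref{prop:known} at all. Two minor remarks: (i) the per-step increments you quote, $C^2N^{2r-2}$ and $CN^{r-1}$, are the transposition-Lipschitz constants of the \emph{unnormalised} sums $\sum_{x\notin B_{i-1}}(d(x)-\bar d)^2$ and $\sum_{x\notin B_{i-1}}d(x)$; after dividing by $N-i+1\ge N/2$, the increments of $T_1$ and $\mu_i$ themselves are $O(N^{2r-3})$ and $O(N^{r-2})$, which is exactly what gives the $\exp(-c_1\eta^2 N/t)$ bound you state, so the conclusion is fine but the intermediate phrasing could mislead; (ii) as in the paper, the Riemann-sum error $O(t^{2k-2}N^{2r-2})$ is only absorbed into $\eps t^{2k-1}N^{2r-1}$ when $tN\gg 1$, but outside that range the stated probability bound $4N^2\exp(-ctN)$ is trivially $\ge 1$, so nothing is lost.
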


The proof of Proposition~\ref{prop:var} will be relatively straightforward once we have proved the following lemma on the likely behaviour of $\Ex{X^{\HH}_1(B_i)^2|B_{i-1}}$.  To streamline notation we drop $\HH$ from the notation for the remainder of the section, writing simply, $V(m),X(B_i),X_1(B_i)$, etc.

\begin{lemma}\label{lem:var}
Let $2\le r\le k$ and $C$ be integers and let $\eps>0$, then there is a constant $c>0$ such that the following holds.  Let $\HH$ be a $k$-uniform hypergraph with $\Delta_r(\HH)\le C$.  Let $t\le 1/2$, and let $m=tN$.  Except with probability at most $4N^2\exp(-ctN)$ we have
\[
\big|\Ex{X_1(B_i)^2|B_{i-1}}\, -\, \sigma^2(\HH)\big| \, \le\, \eps N^{2r-2}
\]
for all $i\le m$.
\end{lemma}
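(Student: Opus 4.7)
The plan is to reduce the statement to concentration of simple partial sums of degrees. Conditional on $B_{i-1}$, the next vertex $b_i$ is uniform over $[N]\setminus B_{i-1}$ and $X_1^{\HH}(B_i)=d_{\HH}(b_i)-\Ex{d_{\HH}(b_i)\mid B_{i-1}}$, so
\[
\Ex{X_1(B_i)^2\mid B_{i-1}}\, =\, \nu(B_{i-1})\, -\, \mu(B_{i-1})^2,
\]
where $\mu(B_{i-1})$ and $\nu(B_{i-1})$ denote the averages of $d_{\HH}$ and $d_{\HH}^2$ over the $n_{i-1}:=N-i+1$ vertices not in $B_{i-1}$. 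Since $\sigma^2(\HH)=\tfrac{1}{N}\sum_x d_{\HH}(x)^2-\bar{d}(\HH)^2$, it suffices to show that, uniformly for $i\le m$, one has $\mu(B_{i-1})=\bar{d}(\HH)+O(\eps N^{r-1})$ and $\nu(B_{i-1})=\tfrac{1}{N}\sum_x d_{\HH}(x)^2+O(\eps N^{2r-2})$.

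Equivalently I need to control the partial sums $\sum_{x\in B_{i-1}}d_{\HH}(x)$ and $\sum_{x\in B_{i-1}}d_{\HH}(x)^2$. I would realise these as $N^{\HH^{(j)}}(B_{i-1})$ for auxiliary $1$-uniform weighted hypergraphs $\HH^{(1)},\HH^{(2)}$ on $[N]$ whose singleton $\{x\}$ has weight $d_{\HH}(x)/N^{r-1}$ and $d_{\HH}(x)^2/N^{2r-2}$ respectively. The bound $\Delta_1(\HH)\le N^{r-1}\Delta_r(\HH)$ ensures $\Delta_1(\HH^{(j)})=O(1)$, so Proposition~\ref{prop:known}, applied with $k=r=1$ (valid since $t\ge 1/N$), yields
\[
\pr{|D^{\HH^{(j)}}(B_i)|\, \ge\, \eps tN}\, \le\, 2N\exp(-c'tN)
\]
for each $j\in\{1,2\}$ and each $i\le m$. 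Undoing the rescaling and taking a union bound over the at most $2m\le N$ pairs $(j,i)$ gives both partial-sum approximations simultaneously for all $i\le m$, except on an event of probability at most $4N^2\exp(-c'tN)$.

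On this good event, $n_{i-1}\ge N/2$ (as $t\le 1/2$ and $s\le t$), and a short algebraic manipulation converts the partial-sum bounds into the required estimates for $\mu(B_{i-1})$ and $\nu(B_{i-1})$. Since $\bar{d}(\HH)=O(N^{r-1})$, one may expand $\mu(B_{i-1})^2=\bar{d}(\HH)^2+O(\eps N^{2r-2})$, and subtracting yields $\Ex{X_1(B_i)^2\mid B_{i-1}}=\sigma^2(\HH)+O(\eps N^{2r-2})$ with an implicit constant depending only on $C$ and $k$. Shrinking the $\eps$ used in the applications of Proposition~\ref{prop:known} by this constant factor completes the proof. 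The only real bookkeeping concerns rescaling the auxiliary hypergraphs so they fit the hypothesis of Proposition~\ref{prop:known}; beyond this, there is no substantive difficulty.
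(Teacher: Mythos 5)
Your proposal is correct and takes essentially the same approach as the paper's proof: both decompose the conditional variance as $\Ex{A_1(B_i)^2\mid B_{i-1}}-\big(\Ex{A_1(B_i)\mid B_{i-1}}\big)^2$ (your $\nu-\mu^2$), relate each conditional moment to a deviation of a rescaled $1$-uniform weighted hypergraph built from the degree (resp.\ degree-squared) function, control these deviations via Proposition~\ref{prop:known} with $k'=r'=1$, and finish with the triangle inequality and a union bound over $i\le m$ and the two auxiliary hypergraphs to obtain the stated $4N^2\exp(-ctN)$ error probability.
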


\begin{proof} We begin by expanding $X_1(B_i)$ according to its definition in terms of $A_1(B_i)$, which is the degree of the vertex added at step $i$,
\begin{align*} 
\Ex{X_1(B_i)^2|B_{i-1}}\,& =\,\Ex{\big(A_1(B_i)-\Ex{A_1(B_i)|B_{i-1}}\big)^2|B_{i-1}}\\
& =\, \Ex{A_1(B_i)^2|B_{i-1}}\, -\, \big(\Ex{A_1(B_i)|B_{i-1}}\big)^2\, .
\end{align*}
We now consider each of these two terms and show that each is very likely to be close to a certain value.

Let $\HH_{deg}$ be the $1$-uniform hypergraph on $[N]$ in which every vertex is an edge with weight given by the degree of that vertex in $\HH$.  And let $\HH'_{deg}$ be obtained by dividing all these weights by $N^{r-1}$.

We will relate the deviation of $\Ex{A_1(B_i)|B_{i-1}}$ from its mean, $\bar{d}(\HH)=N^{-1}\sum_x d(x)$, to a deviation for the hypergraph $\HH'_{deg}$.  We first observe that
\begin{align*}
\Ex{A_1(B_i)|B_{i-1}}\, -\, \bar{d}(\HH)\,& =\, \frac{1}{N-i+1}\sum_{x\in [N]\setminus B_{i-1}}d(x)\, -\, \bar{d}(\HH)\\
& =\, \frac{i-1}{N-i+1}\, \bar{d}(\HH)\, -\, \frac{1}{N-i+1}\sum_{x\in B_{i-1}}d(x)\\
& =\, \frac{1}{N-i+1}\left(\Ex{N^{\HH_{deg}}(B_{i-1})}\, -\, N^{\HH_{deg}}(B_{i-1})\right)\\
& =\, \frac{-1}{N-i+1}D^{\HH_{deg}}(B_{i-1})\, .
\end{align*}
Since $i\le m=tN\le N/2$ we have $N-i+1\ge N/2$ and so
\begin{align*}
|D^{\HH'_{deg}}(B_{i-1})|\, &=\, |D^{\HH_{deg}}(B_{i-1})|/N^{r-1}\\
&\ge\, \frac{|\Ex{A_1(B_i)|B_{i-1}}\, -\, \bar{d}(\HH)|}{2N^{r-2}}\, .
\end{align*}
We now control this deviation using Proposition~\ref{prop:known}.  We have that $\HH'_{deg}$ is a $1$-uniform hypergraph with $\Delta_1\le C$ and so there is a constant $c(C,\eps')>0$ such that
\begin{align*} 
\pr{\big|\Ex{A_1(B_i)|B_{i-1}}\, -\, \bar{d}(\HH)\big|\, \ge\, \eps' N^{r-1}}\, &\le\, \pr{|D^{\HH'_{deg}}(B_{i-1})|\, \ge\, \eps' N/2}\\
&\le\,  2N\exp(-ctN)\, .
\end{align*}

The same argument may be used to control the deviation of $\Ex{A_1(B_i)^2|B_{i-1}}$ from its mean
\[
\bar{d}^{(2)}(\HH)\, :=\, \frac{1}{N}\sum_{x}d(x)^2\, .
\]
Simply proceed as above except with the hypergraph $\HH_{deg,2}$ with weights given by $d(x)^2$ and a renormalised version $\HH'_{deg,2}$ obtained by dividing all weights by $N^{2r-2}$.  It follows that 
\begin{align*} ]
\pr{\big|\Ex{A_1(B_i)^2|B_{i-1}}\, -\, \bar{d}^{(2)}(\HH)\big|\, \ge\, \eps' N^{2r-2}}\, &\le\, \pr{|D^{\HH'_{deg,2}}(B_{i-1})|\, \ge\, \eps' N/2}\\
&\le\,  2N\exp(-ctN)\, ,
\end{align*}
for some constant $c(C^2,\eps')>0$.

Taking $\eps'=\eps/4C$ the result now follows from the triangle inequality.  In particular, if neither of the deviations above occur then we have
\begin{align*}
\Ex{X_1(B_i)^2|B_{i-1}}\, &=\, \Ex{A_1(B_i)^2|B_{i-1}}\, -\, \big(\Ex{A_1(B_i)|B_{i-1}}\big)^2\\
& =\, \bar{d}^{(2)}(\HH)\, \pm \, \eps'N^{2r-2}\, -\, \big(\bar{d}(\HH)\pm \eps'N^{r-1}\big)^2\\
& =\, \bar{d}^{(2)}(\HH)\, -\, \bar{d}(\HH)^2\,  \pm \, 2\eps'N^{2r-2}\, \pm \, 2\eps'N^{r-1}\bar{d}(\HH)\\
& =\, \sigma^2(\HH)\, \pm\, 2\eps'N^{2r-2}\, \pm \, 2C\eps'N^{2r-2}\\
&=\, \sigma^2(\HH)\, \pm\, \eps N^{2r-2}\, . \qedhere
\end{align*}
\end{proof}

We may now deduce Proposition~\ref{prop:var}.

\begin{proof}[Proof of Proposition~\ref{prop:var}] Let us begin by considering a sum related to the coefficients $t^{k-1}(1-t)/(1-s)$ which occur in the definition of $X(B_i)$.  In particular, let us study the sum of the squares of these coefficients.  We will use that for $(i-1)/N\le s'\le i/N$ we have $(1-s')^{-2}\le (1-s)^{-2}\le (1-s')^{-2}+O(N^{-1})$.  We have
\begin{align*}
\sum_{i=1}^{m} \left(\frac{t^{k-1}(1-t)}{1-s}\right)^2\, &=\, t^{2k-2}(1-t)^2 \sum_{i=1}^{m}\frac{1}{(1-s)^2}\\
&=\, (1+O(N^{-1})) t^{2k-2}(1-t)^2N\, \int_{0}^{t}(1-s')^{-2}\,\, ds'\\
&=\, (1+O(N^{-1})) t^{2k-1}(1-t)N\, .
\end{align*}

We are now ready to deduce the proposition from Lemma~\ref{lem:var}.  By the lemma there exists a constant $c>4\eps^{-1}$ such that, except with probability at most $4N^2\exp(-ctN)$, the conditional second moments satisfy
\[
\big|\Ex{X_1(B_i)^2|B_{i-1}}\, -\, \sigma^2(\HH)\big| \, \le\, \eps N^{2r-2}/4
\]
for all $i\le m$.  If this is the case then 
\begin{align*}
V(m)\, &=\, \sum_{i=1}^{m}\, \left(\frac{t^{k-1}(1-t)}{1-s}\right)^2  \big(\sigma^2(\HH)\, \pm \, \eps N^{2r-2}/4\big)\\
& =\, (1+O(N^{-1})) t^{2k-1}(1-t)\sigma^2(\HH)N\, \pm\, \eps t^{2k-1}(1-t)N^{2r-1}/2\\
& =\, t^{2k-1}(1-t)\sigma^2(\HH)N\, \pm\, \eps t^{2k-1}N^{2r-1}\, ,
\end{align*} 
as required.
\end{proof}

\section{Proof of Theorem~\ref{thm:main}}\label{sec:main}

As we mentioned in the introduction, the proof of Theorem~\ref{thm:main} divides naturally into the task of proving that $D^{\HH}(B_m)$ is generally very well approximated by $\Lambda^{\HH}(B_m)$, which we have now achieved in the form of Proposition~\ref{prop:approx}, and the task of controlling the probability of deviations for $\Lambda^{\HH}(B_m)$.  We now state the required result for $\Lambda^{\HH}(B_m)$.  It will then be relatively straightforward to complete the proof of Theorem~\ref{thm:main}.

\begin{prop}\label{prop:main} Let $2\le r\le k$ and $C$ be integers.  Let $\HH_N$ be a sequence of (weighted) $k$-uniform hypergraphs with $V(\HH_N) = [N]$, $\Delta_r(\HH_N)\le C$ and $\sigma^2(\HH_N)\ge N^{2r-2}/C$ for all $N$.  Let $log{N}/N\ll m/N =t \le 1/2$.  Let $a_N$ be a sequence such that
\[
t^{k-1/2}N^{r-1/2}\, \ll\, a_N\, \ll\, t^{k}N^r \, .
\]
Then
\[
\pr{\Lambda^{\HH_N}(B_m)\,  \ge\, a_N}\, = \, \exp \left(  \frac{-(1+o(1))a_N^2}{2(1-t)t^{2k-1}\sigma^2(\HH_N) N} \right)\, .
\]
Furthermore the same holds for the lower tail probability $\pr{\Lambda^{\HH_N}(B_m)\,  \le\, -a_N}$.
\end{prop}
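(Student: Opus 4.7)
The process $(\Lambda^{\HH_N}(B_j))_{j=0}^m$ with $\Lambda^{\HH_N}(B_j)=\sum_{i=1}^j\frac{t^{k-1}(1-t)}{1-i/N}X_1^{\HH_N}(B_i)$ is a martingale with respect to $\F_i=\sigma(b_1,\dots,b_i)$, whose one-step increments are bounded in absolute value by $R=O(t^{k-1}N^{r-1})$ (using $i/N\le t\le 1/2$ and $\Delta_1(\HH_N)\le N^{r-1}\Delta_r(\HH_N)\le CN^{r-1}$). Set $V^{\ast}:=t^{2k-1}(1-t)\sigma^2(\HH_N)N$; the hypothesis $\sigma^2(\HH_N)\ge N^{2r-2}/C$ yields $V^{\ast}=\Theta(t^{2k-1}N^{2r-1})$, and Proposition~\ref{prop:var} guarantees that $V^{\HH_N}(m)=(1+o(1))V^{\ast}$ except on an event of probability at most $4N^2\exp(-ctN)=\exp(-\omega(\log N))$, using $tN\gg\log N$. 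The plan is to obtain the upper tail from Freedman's inequality (Lemma~\ref{lem:Freedman}) and the lower tail from its converse (Lemma~\ref{lem:Freedman_converse}), with the quadratic variation pinned down in both directions by Proposition~\ref{prop:var}.

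For the upper bound, fix a small $\eta>0$ and apply Lemma~\ref{lem:Freedman} with $\alpha=a_N$ and $\beta=(1+\eta)V^{\ast}$, combined with Proposition~\ref{prop:var} to control $\{V^{\HH_N}(m)>\beta\}$, to obtain
\[
\pr{\Lambda^{\HH_N}(B_m)\ge a_N}\;\le\;\exp\!\left(\frac{-a_N^2}{2(\beta+Ra_N)}\right)\,+\,4N^2\exp(-ctN).
\]
Since $a_N\ll t^kN^r$ one has $Ra_N=o(V^{\ast})$, so the Freedman exponent is $(1-o(1))a_N^2/(2\beta)$; and $a_N^2/V^{\ast}=o(tN)$ makes the quadratic-variation tail dominated by the Freedman bound. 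Letting $\eta\to 0$ slowly yields $\pr{\Lambda^{\HH_N}(B_m)\ge a_N}\le\exp(-(1-o(1))a_N^2/(2V^{\ast}))$.

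The lower bound is the more delicate step. Take $c:=C_0\sqrt{V^{\ast}}$ for a large constant $C_0$, and apply Lemma~\ref{lem:Freedman_converse} with $\alpha:=a_N+c$ and $\beta:=(1-\eta)V^{\ast}$. The hypothesis $a_N\gg t^{k-1/2}N^{r-1/2}=\Theta(\sqrt{V^{\ast}})$ gives $c=o(a_N)$ and hence $\alpha^2=(1+o(1))a_N^2$, while the technical conditions of the converse are met with auxiliary parameter $\delta\to 0$ because $\beta/(R\alpha)=\Omega(t^kN^r/a_N)\to\infty$ (from $a_N\ll t^kN^r$) and $\alpha^2/\beta=\Omega(a_N^2/V^{\ast})\to\infty$ (from $a_N\gg\sqrt{V^{\ast}}$). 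This gives
\[
\pr{T_\alpha\le\beta}\;\ge\;\tfrac12\exp\!\left(\frac{-(1+o(1))a_N^2}{2V^{\ast}}\right),
\]
and on that event the inequality $V^{\HH_N}(m)\ge\beta$ (valid with high probability) forces $\tau_\alpha\le m$ and $\Lambda^{\HH_N}(B_{\tau_\alpha})>a_N+c$. To bridge from $\{T_\alpha\le\beta\}$ to $\{\Lambda^{\HH_N}(B_m)\ge a_N\}$ we need the drop $\Lambda^{\HH_N}(B_{\tau_\alpha})-\Lambda^{\HH_N}(B_m)$ to be at most $c$; applying Lemma~\ref{lem:Freedman} to the reversed martingale from time $\tau_\alpha$ (whose total quadratic variation is bounded by $V^{\HH_N}(m)\le(1+o(1))V^{\ast}$ on the favorable event) controls the probability of a drop exceeding $c$ by $\exp(-C_0^2/(2+o(1)))\le 1/2$ provided $C_0$ is a sufficiently large absolute constant. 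We conclude
\[
\pr{\Lambda^{\HH_N}(B_m)\ge a_N}\;\ge\;\tfrac14\pr{T_\alpha\le\beta}\;\ge\;\exp\!\left(\frac{-(1+o(1))a_N^2}{2V^{\ast}}\right),
\]
the constant $1/8$ being absorbed into the $(1+o(1))$ because $a_N^2/V^{\ast}\to\infty$. The lower tail bound follows by applying the identical argument to $-\Lambda^{\HH_N}$. The main obstacle---that the converse of Freedman controls $\pr{T_\alpha\le\beta}$ rather than $\pr{\Lambda^{\HH_N}(B_m)\ge\alpha}$---is overcome precisely because the assumption $a_N\gg\sqrt{V^{\ast}}$ leaves room for a buffer of order $\sqrt{V^{\ast}}$ without altering the leading-order exponent.
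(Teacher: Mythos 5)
Your proof is correct and follows essentially the same route as the paper: Freedman's inequality together with Proposition~\ref{prop:var} for the upper tail, and the converse Freedman inequality with a slightly enlarged $\alpha$ plus a second application of Freedman to control the post-$\tau_\alpha$ drop for the lower tail. The only cosmetic differences are your choice of additive buffer $a_N+C_0\sqrt{V^{\ast}}$ in place of the paper's multiplicative $(1+\eps)a_N$, and your phrase ``reversed martingale'' where what is really meant is the negated forward continuation of the martingale from $\tau_\alpha$.
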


\begin{proof} We remark that the statement on the lower tail follows from the same proof with the obvious minor adjustments.  We omit the subscript $N$ from $\HH_N$ during the proof.

The result consists of an upper bound and a lower bound on $\pr{\Lambda^{\HH}(B_m)\,  \ge\, a_N}$.  We begin with the upper bound, which will be proved using Freedman's inequality (Lemma~\ref{lem:Freedman}).  We consider the martingale given by
\[
S_p\, :=\,\sum_{i=1}^{p}\frac{t^{k-1}(1-t)}{1-s}\, X^{\HH}_1(B_i)\qquad p=0,\dots, m,
\]
which has initial value $0$ and final value $S_m=\Lambda^{\HH}(B_m)$.  Note that the quadratic variation of the process $V(m)$ is exactly $V^{\HH}(m)$ studied in Section~\ref{sec:var}.

Let $\eps>0$ and let $\beta=t^{2k-1}(1-t)\sigma^2(\HH)N+\eps t^{2k-1}N^{2r-1}$ and $R=Ct^{k-1}N^{r-1}$.  By Proposition~\ref{prop:var} we have that $V^{\HH}(m)$, the quadratic variation of the process, is at most $\beta$, except with probability at most $4N^2\exp(-ctN)$, for some constant $c>0$.  We may also observe that $|X^{\HH}_1(B_i)|\le \Delta(\HH) \le N^{r-1}\Delta_r(\HH)\le CN^{r-1}$ deterministically and so the increments are at most $Ct^{k-1}N^{r-1}$ deterministically.  And so it follows by an application of Freedman's inequality (Lemma~\ref{lem:Freedman}) that
\begin{align*}
\pr{\Lambda^{\HH}(B_m)\,  \ge\, a_N}\, &\le\, \pr{\Lambda^{\HH}(B_m)\,  \ge\, a_N \quad \text{and}\quad V(m)\le \beta}\, +\, \pr{V(m)>\beta}\\
&\le \, \exp\left(\frac{-a_N^2}{2(\beta+R a_N)}\right)\, +\, 4N^2\exp(-ctN)\\
&\le\, \exp\left(\frac{-a_N^2}{2t^{2k-1}(1-t)\sigma^2(\HH)N+2\eps t^{2k-1}N^{2r-1}+2Ra_N}\right)\, +\, 4N^2\exp(-ctN)\, .
\end{align*}
The upper bound on $a_N$ implies that $ctN\gg a_N^2/t^{2k-1}\sigma^2 N+\log{N}$ and $2Ra_N\le \eps t^{2k-1}N^{2r-1}$ for all sufficiently large $N$, and so, for all sufficiently large $N$ we have
\[
\pr{\Lambda^{\HH}(B_m)\,  \ge\, a_N}\, \le\, (1+o(1))\exp\left(\frac{-a_N^2}{2t^{2k-1}(1-t)\sigma^2(\HH)N\, (1+2C\eps)}\right)\, .
\]
Since $C$ is fixed and $\eps$ is arbitrary this gives the required upper bound
\[
\pr{\Lambda^{\HH}(B_m)\,  \ge\, a_N}\, \le\, \exp\left(\frac{-(1+o(1))a_N^2}{2t^{2k-1}(1-t)\sigma^2(\HH)N}\right)\, .
\]

We now prove the lower bound using the converse Freedman inequality (Lemma~\ref{lem:Freedman_converse}).    One slight problem that faces us is that the converse inequality gives a lower bound to the event that $S_p\ge \alpha$ for some $p\le m$ rather than for $p=m$.

Let $\eps>0$ and let $\beta=t^{2k-1}(1-t)\sigma^2(\HH)N-\eps t^{2k-1}N^{2r-1}$ and $R=Ct^{k-1}N^{r-1}$.  As above all increments are bounded by $R$ deterministically and by Proposition~\ref{prop:var} we have $V(m)\ge \beta$ except with probability at most $4N^2\exp(-ctN)$.  Let $\alpha=(1+\eps)a_N$.  We shall apply Lemma~\ref{lem:Freedman_converse} with this value of $\alpha$.  We obtain that 
\[
\pr{T_{\alpha}\le \beta}\, \ge\, \frac{1}{2} \exp \left( \frac{-\alpha^2(1+4\delta)}{2\beta}\right),
\]
where $\delta$ is minimal such that $\beta/\alpha \ge 9R\delta^{-2}$ and $\alpha^2/\beta \ge 16\delta^{-2}\log (64\delta^{-2})$.  The conditions on $a_N$ are such that we may take $\delta=\delta_N=o(1)$, and so we have that
\[
\pr{T_{\alpha}\le \beta}\, \ge\, \frac{1}{2} \exp \left( \frac{-(1+O(\eps))a_N^2}{2t^{2k-1}(1-t)\sigma^2(\HH)N}\right)\, .
\]
If $V(m)\ge \beta$ then this event implies that the hitting time $\tau_{\alpha}$ is at most $m$.  Since this event fails with probability at most $4N^2\exp(-ctN)$, which is $o(1)$ of the main term, we have
\[
\pr{\exists p\le m\,\,:\,\, S_p\ge \alpha}\, \ge\, \frac{1}{4} \exp \left( \frac{-(1+O(\eps))a_N^2}{2t^{2k-1}(1-t)\sigma^2(\HH)N}\right)\, 
\]
for all sufficiently large $N$.  To complete the proof we must subtract the probability of the event that we reach $\alpha=(1+\eps)a_N$ for some $S_p$, $p<m$ and then decrease so that $S_m<a_N$.  Conditional on the event that $S_p\ge \alpha$ for some $p<m$, the part of the martingale after the hitting time $\tau_{\alpha}$ is also a martingale with quadratic variation at most $O(t^{2k-1}(1-t)\sigma^2(\HH)N)$ except with probability at most $4N^2\exp(-ctN)$ (by Proposition~\ref{prop:var}) we obtain by Lemma~\ref{lem:Freedman} that the conditional probability of this event is at most
\[
\exp\left(\frac{-\Omega(1)a_N^2}{t^{2k-1}(1-t)\sigma^2(\HH)N}\right)\, +\, 4N^2\exp(-ctN)\, \le\, \frac{1}{2}\, 
\]
for all sufficiently large $N$.  It follows that 
\[
\pr{S_m\ge a_N}\, \ge\, \frac{1}{8}\exp\left( \frac{-(1+O(\eps))a_N^2}{2t^{2k-1}(1-t)\sigma^2(\HH)N}\right)
\] 
for all sufficiently large $N$.  Since $\eps$ is arbitrary this is the required result.
\end{proof}

We are now ready to combine the various auxiliary results and complete the proof of Theorem~\ref{thm:main}.

\begin{proof}[Proof of Theorem~\ref{thm:main}]  Again we remark that the statement on the lower tail follows from the same proof with the obvious minor adjustments.  

We must prove an upper bound and a lower bound on $\pr{D^{\HH}(B_m)\,  \ge\, a_N}$.  We begin with the upper bound.  Observe that the interval of value of $a_N$ is empty unless $t\gg (\log{N}/N)^{(r-1)/(k-1)}$ and so we may suppose that $t\gg (\log{N}/N)^{(r-1)/(k-1)}$ throughout.  Fix $\eps>0$.  By Propositions~\ref{prop:approx} and~\ref{prop:main} we have 
\begin{align*}
\pr{D^{\HH}(B_m)\, \ge\, a_N}\,& \le\, \pr{\Lambda^{\HH}(B_m)\,   \ge\, (1-\eps)a_N}\, +\, \pr{|D^{\HH}(B_m)\, -\, \Lambda^{\HH}(B_m)|\, \ge\, \eps a_N}\\
& \le\,  \exp \left(  \frac{-(1-\eps+o(1))^2 a_N^2}{2(1-t)t^{2k-1}\sigma^2(\HH) N} \right)\, +\, \exp \left(  \frac{-\omega(1)\eps^2 a_N^2}{t^{2k-1}N^{2r-1}}\right)\, .
\end{align*}
As $\sigma^2(\HH)$ is of order $N^{2r-2}$ it is clear that the second term is $o(1)$ of the first for all $\eps>0$.  As $\eps$ is arbitrary we obtain
\[
\pr{D^{\HH}(B_m)\,  \ge\, a_N}\, \le \, \exp \left(  \frac{-(1+o(1))a_N^2}{2(1-t)t^{2k-1}\sigma^2(\HH) N} \right)\, ,
\]
as required.

For the lower bound we again use Propositions~\ref{prop:approx} and~\ref{prop:main}.  We obtain
 \begin{align*}
\pr{D^{\HH}(B_m)\, \ge\, a_N}\,& \ge\, \pr{\Lambda^{\HH}(B_m)\,   \ge\, (1+\eps)a_N}\, -\, \pr{|D^{\HH}(B_m)\, -\, \Lambda^{\HH}(B_m)|\, \ge\, \eps a_N}\\
&\ge\,  \exp \left(  \frac{-(1+\eps+o(1))^2 a_N^2}{2(1-t)t^{2k-1}\sigma^2(\HH) N} \right)\, -\, \exp \left(  \frac{-\omega(1)\eps^2 a_N^2}{t^{2k-1}N^{2r-1}} \right)\, .
\end{align*}
Again the second term is $o(1)$ of the first for all $\eps>0$.  As $\eps$ is arbitrary we obtain
\[
\pr{D^{\HH}(B_m)\,  \ge\, a_N}\, \ge \, \exp \left(  \frac{-(1+o(1))a_N^2}{2(1-t)t^{2k-1}\sigma^2(\HH) N} \right)\, ,
\]
as required.
\end{proof}

\section{Deviations $D^{\HH}(B_p)$ -- Proof of Theorem~\ref{thm:pworld}}\label{sec:pworld}

In this section, we prove Theorem~\ref{thm:pworld}.  To streamline notation we simply write $\sigma$ for $\sigma(\HH_N)$, $\bar{d}$ for $\bar{d}(\HH_N)$ and $h$ for $e(\HH_N)$ throughout this section. We recall that if we condition that $B_p$ contains exactly $m$ elements then it is distributed as $B_m$, and so we have
\begin{equation}\label{eq:mtop}
\pr{D^{\HH_N}(B_p)\, \ge\, \delta_N p^k h}\, =\, \sum_{m=0}^{N}b_{N,p}(m)\, \pr{N^{\HH_N}(B_m)\, \ge\, (1+\delta_N)p^kh}\, ,
\end{equation}
where $b_{N,p}(m):= \pr{\Bin(N,p) = m}$.  Setting $q := 1 - p$, we use the following well known bound for the binomial distribution (for stronger results, see Theorem 1.13 in ~\cite{GGS2019}, which adapts an argument of Bahadur ~\cite{Bahadur}).
\eq{ourbin}
b_{N,p}(m)\, =\, \exp\left(-\frac{(1+o(1))x(m)^2}{2}\, +\, O(\log{N})\right),
\eqe
where $x(m)=(m-pN)/\sqrt{pqN}$, which holds provided $N^{-1}\le p\le 1/2$, and $\sqrt{pN}\ll m-pN\ll pN$.

It is useful to classify the possible choices of $m$ as follows.  For $\eta\in [0,1]$ let us define 
\[
m_{\eta}\, :=\, \left(1\, +\, \frac{\eta \delta_N}{k}\right)pN\, .
\]

We may think of \eqr{mtop} as offering us various ways to achieve the required deviation.  The term $m=m_0$ corresponds to a case where the number of points in $B_p$ is equal to its  expected value, $pN$, and all the work of achieving the deviation must be done in the $m$-model.  On the other hand $m=m_1$ corresponds to a large enough deviation in the number of points in $B_p$ that no (significant) deviation is required in the $m$-model, as $L^{\HH_N}(m_1)\approx (1+\delta_N)h$.  So we will be interested in the choice of $\eta\in [0,1]$ which minimises the total ``cost'' of the deviation.  In fact this will be achieved by
\[
\eta^*\, :=\, \frac{\bar{d}^2}{\bar{d}^2+\sigma^2}\, .
\]
Note that for $\eta\in [0,1]$ we have by a fairly simple computation
\[
L^{\HH_N}(m_{\eta})\, =\, p^k h\, +\, (1+o(1))\eta\delta_N p^k h\, +\, O(p^{k-1}h/N)\, .
\]
Therefore, achieving the deviation $N^{\HH_N}(B_{m_{\eta}})\ge (1+\delta_N)p^kh$ corresponds to
\eq{mdev}
D^{\HH_N}(B_{m_{\eta}})\, \ge\, (1-\eta+o(1))\delta_N p^k h \, ,
\eqe
since $p^{k-1}h/N = o(\delta_Np^kh)$.

Let us also define $\eta^{\circ}:=(\eta^{*})^{1/2}=\bar{d}/\sqrt{\bar{d}^2+\sigma^2}$, and $m^{\circ}:=m_{\eta^{\circ}}$.

We are now ready to prove Theorem~\ref{thm:pworld}.

\begin{proof}[Proof of Theorem~\ref{thm:pworld}]
Note that, under the given conditions, any additive error of order $O(\log{N})$ in the exponential may be included in the little $o$ term.  As this is equivalent to a multiplicative $N^{O(1)}$ term in front of the exponential it suffices to prove a result for the maximum contribution (as there are only $N$ values of $m$) in \eqr{mtop}.  In other words, we must prove that
\begin{equation}\label{eq:maxf(m)}
\max_{m} f(m) \, =\, \exp\left(\frac{-(1+o(1))\delta_N^2 p h^2}{2q(\bar{d}^2+\sigma^2)N}\,+\, O(\log{N})\right)\, ,
\end{equation}
where
\[
f(m) := b_{N,p}(m)\, \pr{N^{\HH_N}(B_m)\, \ge\, (1+\delta_N)p^kh}.
\]
We consider three regimes of $m$: (i) $m \le m_0$, (ii) $m_0 \le m \le m^{\circ}$ and (iii) $m \ge m^{\circ}$. 

Regime (i): Since $b_{N,p}(m) \le 1$ and as the event $N^{\HH_N}(B_m)\, \ge\, (1+\delta_N)p^kh$ is increasing in $m$, we have
\begin{align*}
f(m) &\le \pr{N^{\HH_N}(B_{m_0}))\, \ge\, (1+\delta_N)p^kh} \\
       &= \pr{D^{\HH_N}(B_{m_0})\, \ge\, (1+o(1))\delta_Np^kh},
\end{align*}
where we used~\eqr{mdev} in the last line. The conditions on $\delta_N$ allow us to apply Theorem~\ref{thm:main} and so we obtain
\begin{align*}
f(m) &\le \exp\left(\frac{-(1+o(1))\delta_N^2 p h^2}{2q\sigma^2N}\right)\\
&\le \exp\left(\frac{-(1+o(1))\delta_N^2 p h^2}{2q(\bar{d}^2+\sigma^2)N}+\, O(\log{N})\right).
\end{align*}
Regime (iii):  Since $\pr{N^{\HH_N}(B_m)\, \ge\, (1+\delta_N)p^kh} \le 1$ and $b_{N,p}(m) \le b_{N,p}(m^{\circ})$ for $m \ge m^{\circ}$, we obtain
\begin{align*}
f(m) &\le b_{N,p}(m^{\circ}) \\
&= \exp\left(-(1+o(1))\frac{(\eta^{\circ})^2\delta_N^2 p N}{2q k^2}+\, O(\log{N})\right)\, \\
&= \exp\left(\frac{-(1+o(1))\delta_N^2 p h^2}{2q(\bar{d}^2+\sigma^2)N}+\, O(\log{N})\right),
\end{align*}
where we have used the estimate~\eqr{ourbin} for $b_{N,p}(m)$ and the fact that $h=N\bar{d}/k$.

Regime (ii): We must take into account the contributions from both the binomial distribution and the deviations in the $m$-model. Since $m_0 \le m \le m^{\circ}$, it suffices to prove
\[
\max_{\eta\in [0,\eta^{\circ}]} f(m_{\eta})\, =\, \exp\left(\frac{-(1+o(1))\delta_N^2 p h^2}{2q(\bar{d}^2+\sigma^2)N}+\, O(\log{N})\right)\, .
\]
The corresponding value of $x$, $x(m_\eta)$, is
\[
x(m_{\eta})\, =\,\frac{\eta\delta_N p^{1/2}N^{1/2}}{kq^{1/2}}\, .
\]
Using~\eqr{ourbin}, it follows that
\[
b_N(m_{\eta})\, =\, \exp\left(-(1+o(1))\frac{\eta^2\delta_N^2 p N}{2q k^2}+\, O(\log{N})\right)\, .
\]
On the other hand, by~\eqr{mdev} and Theorem~\ref{thm:main}, we have
\begin{align*}
\pr{N^{\HH_N}(m_{\eta})\ge (1+\delta_N) h}\, &=\,  \pr{D^{\HH_N}(m_{\eta})\, \ge\, (1-\eta+o(1))\delta_N p^k h}\\
& = \, \exp\left(\frac{-(1+o(1))(1-\eta)^2\delta_N^2ph^2}{2q\sigma^2 N}\right)\\
& =\, \exp\left(\frac{-(1+o(1))(1-\eta)^2\delta_N^2p\bar{d}^2 N}{2qk^2 \sigma^2}\right)\, ,
\end{align*}
where we have used in the final line that $h=\bar{d}N/k$.  It follows that
\[
f(m_{\eta}) =\, \exp\left(\frac{-(1+o(1))\delta_N^2p N}{2qk^2 \sigma^2}\big[\sigma^2 \eta^2  \, +\, \bar{d}^2(1-\eta)^2\big]\, +\, O(\log{N})
\right)\, .
\]
This expression is maximized by choosing $\eta$ to minimize $\sigma^2 \eta^2+\bar{d}^2 (1-\eta)^2$.  This value of $\eta$ is 
\[
\eta^*\, =\, \frac{\bar{d}^2}{\bar{d}^2+\sigma^2}
\]
and in this case (using $\bar{d}=hk/N$) we have precisely
\[
f(m_{\eta^*})\, =\, \exp\left(\frac{-(1+o(1))\delta_N^2 p h^2}{2q(\bar{d}^2+\sigma^2)N}+\, O(\log{N})\right)\, ,
\]
as required.
\end{proof}

\section{Applications to arithmetic structures}\label{sec:appl}

In this section we specialize Theorems \ref{thm:main} and \ref{thm:pworld} to arithmetic progressions (Section \ref{sec:kap}) and additive quadruples (Section \ref{sec:sidon}) in $\{1, \ldots, N\}$.

\subsection{Arithmetic progressions}\label{sec:kap}

We denote by $\HH^{k}$ the hypergraph encoding increasing $k$-APs in $[N]$. Since applying Theorems \ref{thm:main} and \ref{thm:pworld} requires control of the average degree and of the variance of the degrees, the following lemma will be useful. Its proof is given in \cite{SeccoThesis} (we remark that similar computations were given in \cite{BGSZ}).

\begin{lemma}\label{lem:var_ap}
Consider the hypergraph $\HH^{k}$ of increasing $k$-APs in $[N]$. We have
\[
\bar{d}(\HH^{k}) = (1+o(1)) \frac{kN}{2(k-1)}
\]
and
\[
\sigma^2(\HH^k) = (1+o(1)) \theta_k N^2,
\]
where
\begin{equation}\label{eq:thetak}
\theta_k := \frac{1}{3(k-1)^2} \left( k - \frac{3k^2}{4} + \sum_{1\le i < j \le k} \frac{(k-1)^2 - (k-j)^2 - (i-1)^2}{(j-1)(k-i)} \right).
\end{equation}
\end{lemma}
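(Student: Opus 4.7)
The plan is to derive an explicit expression for the degree $d_{\HH^k}(x)$ as a sum over the position $i \in \{1,\ldots,k\}$ that $x$ occupies in a $k$-AP, then replace the discrete sums over $x \in [N]$ by integrals on $[0,1]$, and finally evaluate the resulting one-dimensional integrals.

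A $k$-AP contained in $[N]$ with $x$ in position $i$ has the form $x-(i-1)d,\ldots,x,\ldots,x+(k-i)d$ and exists iff the positive integer $d$ satisfies both $d\leq (x-1)/(i-1)$ (if $i>1$) and $d\leq (N-x)/(k-i)$ (if $i<k$). Writing $g_i(\alpha):=\min(\alpha/(i-1),(1-\alpha)/(k-i))$ with the convention that an undefined quotient is $+\infty$, and $\phi_k:=\sum_{i=1}^k g_i$, we obtain
\[
d_{\HH^k}(x)\;=\;\sum_{i=1}^{k}\left\lfloor\min\!\left(\tfrac{x-1}{i-1},\tfrac{N-x}{k-i}\right)\right\rfloor\;=\;N\phi_k(x/N)\,+\,O(1)
\]
uniformly in $x$. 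Since the error term is bounded, standard Riemann-sum estimates give $\bar d(\HH^k)/N=\int_0^1\phi_k+o(1)$ and $\sigma^2(\HH^k)/N^2=\int_0^1\phi_k^2-\bigl(\int_0^1\phi_k\bigr)^{2}+o(1)$.

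Each function $g_i$ is piecewise linear with break point $\alpha_i=(i-1)/(k-1)$. A direct computation on the two linear pieces yields $\int_0^1 g_i=1/(2(k-1))$ and $\int_0^1 g_i^2=1/(3(k-1)^2)$ for every $i\in\{1,\ldots,k\}$. Summing the first identity over $i$ gives $\bar d(\HH^k)=(1+o(1))kN/(2(k-1))$, proving the first claim. For the second, expand $\int_0^1\phi_k^2=\sum_i\int g_i^2+2\sum_{i<j}\int g_i g_j$; the diagonal contribution is $k/(3(k-1)^2)$. For each pair $i<j$, the integral $\int_0^1 g_i g_j$ splits into three polynomial integrals on $[0,\alpha_i]$, $[\alpha_i,\alpha_j]$ and $[\alpha_j,1]$, and combining them yields $\int_0^1 g_i g_j=[(k-1)^2-(k-j)^2-(i-1)^2]/[6(k-1)^2(j-1)(k-i)]$. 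Substituting these values and subtracting $\bar d(\HH^k)^2/N^2=k^2/(4(k-1)^2)+o(1)$ produces the formula~\eqref{eq:thetak} after elementary algebraic manipulation.

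No conceptual obstacle arises: the entire argument is mechanical once the continuous limit $\phi_k$ has been identified, and the only real care needed is in collecting the cross-term integrals and checking that the boundary cases $i=1$ and $j=k$ are handled uniformly by the $+\infty$ convention in the definition of $g_i$ (which forces $\alpha_1=0$ and $\alpha_k=1$, making the corresponding extreme pieces vanish). A full version of this computation is carried out in~\cite{SeccoThesis}.
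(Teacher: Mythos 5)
Your proof is correct and complete: the decomposition of $d_{\HH^k}(x)$ by the position $i$ that $x$ occupies gives $d_{\HH^k}(x)=N\phi_k(x/N)+O(1)$ with $\phi_k=\sum_i g_i$, the Riemann-sum passage is justified since $\phi_k$ is bounded and piecewise linear, the integrals $\int g_i=\tfrac{1}{2(k-1)}$, $\int g_i^2=\tfrac{1}{3(k-1)^2}$ and $\int g_ig_j=\tfrac{(k-1)^2-(k-j)^2-(i-1)^2}{6(k-1)^2(j-1)(k-i)}$ all check out (including the boundary cases $i=1$, $j=k$ where one piece degenerates), and assembling $\int\phi_k^2-(\int\phi_k)^2$ reproduces~\eqref{eq:thetak} exactly. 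The paper itself provides no in-text proof and simply cites~\cite{SeccoThesis}; the integral approximation you carry out is the natural route and is what one expects that reference to do.
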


It is also easy to check that $\Delta_2(\HH^k) = O(1)$ and so we can apply Theorems \ref{thm:main} and \ref{thm:pworld} with $r=2$. In the $m$-model, we obtain the following result.

\begin{theorem}\label{thm:kAPnonregm}
Let $0 \le m \le N$ be such that $t = m/N \le 1/2$. Let $a_N$ be a sequence such that
\[
t^{k-1/2}N^{3/2} (\log N)^{1/2} \ll a_N \ll t^{3k/2-1}N^2.
\]
Then
\[
\pr{D^{\HH^{k}}(B_m) \ge a_N} = \exp \left( \frac{-(1+o(1))a_N^2}{2\theta_k(1-t)t^{2k-1}N^3} \right),
\]
where $\theta_k$ is defined in \eqref{eq:thetak}.
\end{theorem}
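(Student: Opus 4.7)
The plan is to apply Theorem~\ref{thm:main} directly, with $r=2$, to the hypergraph $\HH^{k}$ of increasing $k$-APs in $[N]$. The theorem requires three inputs: a bound on $\Delta_r$, a lower bound on $\sigma^2$ of the right order, and that $a_N$ lies in the admissible interval. All three are immediate once Lemma~\ref{lem:var_ap} is in hand.

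First I would check the hypotheses. The uniformity is $k$. For $r=2$, we need $\Delta_2(\HH^k)=O(1)$: this is standard, since any pair $\{x,y\}\subseteq[N]$ determines the common difference of any $k$-AP containing it up to the position of $x$ and $y$ within the AP, so $\{x,y\}$ lies in at most $\binom{k}{2}$ progressions. Next, Lemma~\ref{lem:var_ap} gives $\sigma^2(\HH^k)=(1+o(1))\theta_k N^2$, which is of the form $\Omega(N^{2r-2})$ for $r=2$ provided $\theta_k>0$; positivity of $\theta_k$ can be verified directly from~\eqref{eq:thetak} (and for $k=3$ one has $\theta_3=1/48$, consistent with the excerpt's computation of $\sigma^2(\HH^3)$). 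Thus there exists $C=C(k)$ with $\Delta_2(\HH^k)\le C$ and $\sigma^2(\HH^k)\ge N^2/C$ for all sufficiently large $N$.

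Next I would check that the admissible range of $a_N$ in Theorem~\ref{thm:main} specialises to the stated one. With $r=2$, the lower bound $t^{k-1/2}N^{r-1/2}(\log N)^{1/2}$ becomes $t^{k-1/2}N^{3/2}(\log N)^{1/2}$, and the upper bound $t^{k-1/2+(k-1)/2(r-1)}N^r$ becomes
\[
t^{k-1/2+(k-1)/2}N^2 \,=\, t^{(3k-2)/2}N^2 \,=\, t^{3k/2-1}N^2,
\]
so the ranges agree. Finally, substituting $\sigma^2(\HH^k)=(1+o(1))\theta_k N^2$ into the conclusion of Theorem~\ref{thm:main} gives
\[
\pr{D^{\HH^k}(B_m)\ge a_N} \,=\, \exp\!\left(\frac{-(1+o(1))a_N^2}{2(1-t)t^{2k-1}\sigma^2(\HH^k)N}\right) \,=\, \exp\!\left(\frac{-(1+o(1))a_N^2}{2\theta_k(1-t)t^{2k-1}N^3}\right),
\]
which is precisely the claimed bound; the $(1+o(1))$ absorbs the relative error from replacing $\sigma^2(\HH^k)$ by $\theta_k N^2$ in the denominator of the exponent, since the exponent itself is $\omega(1)$ on the given range.

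There is no serious obstacle: the theorem does all the heavy lifting, and the only substantive input specific to arithmetic progressions is the asymptotic value of $\sigma^2(\HH^k)$ from Lemma~\ref{lem:var_ap}. The one point that deserves a sentence of care is the verification that $\theta_k$ is strictly positive (so that Theorem~\ref{thm:main}'s hypothesis $\sigma^2\ge N^{2r-2}/C$ is met); this follows from the closed form~\eqref{eq:thetak}, as carried out in~\cite{SeccoThesis}.
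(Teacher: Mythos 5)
Your proposal is correct and follows exactly the route the paper takes: verify $\Delta_2(\HH^k)=O(1)$, invoke Lemma~\ref{lem:var_ap} for $\sigma^2(\HH^k)=(1+o(1))\theta_k N^2$, and specialise Theorem~\ref{thm:main} with $r=2$. Your check of the $a_N$ range and the absorption of the $(1+o(1))$ error from replacing $\sigma^2(\HH^k)$ by $\theta_k N^2$ are exactly the (brief, mostly implicit) verifications the paper relies on.
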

 
Before stating the result for the $p$-model, we define
\[
\gamma_k = \frac{4}{3} \left( k + \sum_{1\le i < j \le k} \frac{(k-1)^2 - (k-j)^2 - (i-1)^2}{(j-1)(k-i)} \right)
\]
so that
\[
\frac{\bar{d}(\HH^k)^2 + \sigma^2(\HH^k)}{e(\HH^k)^2} = (1+o(1))\frac{\gamma_k}{N^2}.
\]
 
Therefore Theorem \ref{thm:pworld} gives us the following.
\begin{theorem}\label{thm:kAPnonregp}
Given a sequence $p = p_N$ which may be a constant in $(0,1/2)$ or converge to $0$, let $\delta_N$ be a sequence satisfying
\[
\sqrt{\frac{\log N}{pN}}  \ll \delta_N \ll p^{k/2-1}.
\]
Then
\[
\pr{ D^{\HH^{k}}(B_p)\, \ge\, \delta_N p^k e(\HH^k)} = \, \exp \left( \frac{-(1+o(1))\delta_N^2pN}{2\gamma_k(1-p)} \right).
\]
\end{theorem}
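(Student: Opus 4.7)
The result is a direct specialisation of Theorem~\ref{thm:pworld} to the hypergraph $\HH^k$, taken with $r=2$. The plan therefore splits into two steps: first verify the hypotheses, then simplify the resulting exponent using the parameters of $\HH^k$ computed in Lemma~\ref{lem:var_ap}.

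For the first step, $\HH^k$ is $k$-uniform by construction, and $\Delta_2(\HH^k)=O(1)$ since any two distinct elements of $[N]$ extend to only $O(1)$ increasing $k$-APs. Lemma~\ref{lem:var_ap} provides $\sigma^2(\HH^k)=(1+o(1))\theta_k N^2$, so $\sigma^2(\HH^k)\ge N^2/C$ for some constant $C=C(k)$ as soon as one checks that $\theta_k>0$ --- a positive rational depending only on $k$. Finally the range of $\delta_N$ in the statement matches the hypothesis $\sqrt{\log N/(pN)}\ll \delta_N\ll p^{(k-r)/(2(r-1))}$ of Theorem~\ref{thm:pworld} with $r=2$, because $(k-2)/2=k/2-1$.

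Applying Theorem~\ref{thm:pworld} then yields
\[
\pr{D^{\HH^k}(B_p)\ge \delta_N p^k e(\HH^k)}=\exp\left(\frac{-(1+o(1))\delta_N^2\, p\, e(\HH^k)^2}{2(1-p)\big(\bar{d}(\HH^k)^2+\sigma^2(\HH^k)\big)N}\right).
\]
It remains to rewrite the exponent in the claimed form, which reduces to the identity
\[
\frac{\bar{d}(\HH^k)^2+\sigma^2(\HH^k)}{e(\HH^k)^2}=(1+o(1))\frac{\gamma_k}{N^2}
\]
stated in the paragraph preceding Theorem~\ref{thm:kAPnonregp}. Using $e(\HH^k)=\bar{d}(\HH^k)N/k=(1+o(1))N^2/(2(k-1))$ together with Lemma~\ref{lem:var_ap}, this identity is equivalent to the purely numerical equality $k^2+4(k-1)^2\theta_k=\gamma_k$. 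Substituting the definition of $\theta_k$ from \eqref{eq:thetak}, the $-3k^2/4$ term inside $\theta_k$ contributes $-k^2$ after multiplication by $4(k-1)^2\cdot\tfrac{1}{3(k-1)^2}\cdot 3=4/1$... more precisely $4(k-1)^2\theta_k=\tfrac{4}{3}(k-3k^2/4+\textstyle\sum)=\tfrac{4k}{3}-k^2+\tfrac{4}{3}\sum$, so $k^2+4(k-1)^2\theta_k=\tfrac{4k}{3}+\tfrac{4}{3}\sum=\gamma_k$ exactly.

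I do not anticipate any serious obstacle; the argument is essentially bookkeeping. The only non-mechanical point is the algebraic cancellation above, where the definition of $\gamma_k$ has been tailored so that the two contributions of $k^2$ annihilate. Once this identity is in hand, the prefactor $e(\HH^k)^2/\big((\bar{d}(\HH^k)^2+\sigma^2(\HH^k))N\big)$ simplifies to $N/\gamma_k$, and the exponent becomes $-(1+o(1))\delta_N^2 pN/(2\gamma_k(1-p))$ as required.
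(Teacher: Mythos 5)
Your proposal is correct and takes essentially the same approach as the paper: specialise Theorem~\ref{thm:pworld} to $\HH^k$ with $r=2$, use Lemma~\ref{lem:var_ap} for $\bar{d}(\HH^k)$ and $\sigma^2(\HH^k)$, and simplify the exponent via the identity $\bar{d}(\HH^k)^2+\sigma^2(\HH^k)=(1+o(1))\gamma_k e(\HH^k)^2/N^2$. The algebraic check $k^2+4(k-1)^2\theta_k=\gamma_k$ that you carry out is exactly the content of the paper's stated formula for $\gamma_k$, so you have simply made explicit what the paper leaves implicit.
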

 
This result is applicable when $p \gg (\log N/N)^{1/(k-1)}$. We remark that this theorem was obtained by Bhattacharya, Ganguly, Shao and Zhao \cite{BGSZ} under the following conditions on $p$ and $\delta_N$: $p \to 0$, $\delta_N = O(1)$, $\delta_N^{-3}p^{k-2}(\log(1/p))^2 \to \infty$, and
\[
\min \{\delta_N p^k, \delta_N^2 p \} \ge N^{-\frac{1}{6(k-1)}}\log N
\]
For $k = 3$, the right-hand side can be relaxed to $N^{-1/6}(\log N)^{7/6}$ and for $k = 4$, it can be relaxed to $N^{-1/12}(\log N)^{13/12}$.  Our theorem extends the range of $p$ and $\delta_N$ for which the result is valid.

\subsection{Sidon equation}\label{sec:sidon}

We now turn to solutions of the Sidon equation $x + y = z + w$ in $[N]$.  We shall focus on solutions in which $x,y,z$ and $w$ are distinct and state our results in the context of the $4$-uniform hypergraph $\HH^{S}$ with vertex set on $[N]$ and edge $\{x,y,z,w\}$ if $x+y=z+w$.  We remark that it would be straightforward to extend our results to include solutions with a repeated element -- one may simply apply Proposition~\ref{prop:known} to the corresponding $3$-uniform hypergraph to bound its contribution.  The following lemma provides information on the average degree and the variance degree of $\HH^{S}$.

\begin{lemma}\label{lem:var_sidon}
Let $\HH^{S}$ be the hypergraph corresponding to pairwise distinct solutions of the Sidon equation $x + y = z + w$ in $[N]$. Then
\[
\bar{d}(\HH^{S}) = (1+o(1)) \frac{N^2}{3}
\]
and
\[
\sigma^2(\HH^S) = (1+o(1))\frac{N^4}{720}.
\]
\end{lemma}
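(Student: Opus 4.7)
The plan is to extract both statements from an asymptotic formula for the degree $d(v) := d_{\HH^S}(v)$ of a generic vertex $v\in[N]$, and then approximate sums over $v$ by integrals. The key structural point is that for four distinct elements the partition realising the equation $a+b=c+d$ is unique: if two distinct pairings both worked we would obtain an equality forcing two of the four elements to coincide. In particular, every edge $\{v,a,b,c\}\in E(\HH^S)$ can be unambiguously labelled by the element $a$ paired with $v$, and, letting $T(v)$ denote the number of ordered triples $(a,b,c)$ of pairwise distinct elements of $[N]\setminus\{v\}$ with $v+a=b+c$, we have $d(v) = T(v)/2$ (the factor $2$ coming from the swap of $b$ and $c$).

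Setting $r(s):=|\{(b,c)\in[N]^2:b+c=s\}|$, the next step is to observe that
\[
T(v) \, = \, \sum_{a\in[N]} r(v+a) \, + \, O(N),
\]
since the distinctness constraints (excluding $a=v$, $b=c$, and $\{b,c\}\cap\{v,a\}\neq\emptyset$) each remove only $O(1)$ choices for every value of $a$. The remaining sum is an elementary lattice-point count: it equals $|\{(b,c)\in[N]^2 : v < b+c \le v+N\}|$, the number of integer points of $[N]^2$ lying in a diagonal strip. Computing this directly and writing $u:=v/N$ yields
\[
d(v) \, = \, \frac{N^2}{4}\bigl(1+2u-2u^2\bigr)\, +\, O(N).
\]

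From here everything follows by Riemann-sum approximation:
\[
\bar{d}(\HH^S) \, = \, \frac{N^2}{4}\int_0^1 (1+2u-2u^2)\, du \, + \, o(N^2) \, = \, \frac{N^2}{3}(1+o(1));
\]
and, expanding $(1+2u-2u^2)^2 = 1+4u-8u^3+4u^4$ so that $\int_0^1(1+2u-2u^2)^2\,du = 9/5$,
\[
\frac{1}{N}\sum_v d(v)^2 \, = \, \frac{9N^4}{80}(1+o(1)).
\]
Subtracting $\bar{d}(\HH^S)^2 = N^4/9 + o(N^4)$ yields $\sigma^2(\HH^S) = (9/80 - 1/9)N^4 + o(N^4) = N^4/720 + o(N^4)$, as required. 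I do not anticipate a genuine obstacle; the only point needing small care is bookkeeping that the $O(N)$ correction in $d(v)$ contributes only $O(N^3)$ to $\sum_v d(v)^2$, which is safely absorbed in the $(1+o(1))$ factor, and that the desired variance emerges as the tiny numerical cancellation $9/80-1/9 = 1/720$.
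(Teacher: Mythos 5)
Your proof is correct and follows essentially the same route as the paper's: both compute the degree $d_{\HH^S}(v)$ by a lattice-point count (yours by counting points of $[N]^2$ in a diagonal strip, the paper's by an equivalent explicit sum over the partner $b$), both arrive at the identical asymptotic $d(v) = \frac{N^2}{4}(1+2u-2u^2)+O(N)$, and both then extract $\bar d$ and $\sigma^2$ by summing (you via a Riemann integral, the paper via the discrete power sums $\sum a(N-a)$ and $\sum a^2(N-a)^2$, which amount to the same thing). Your explicit remark that the pairing is unique for four distinct elements is a slight tidying-up of the factor-of-two justification that the paper leaves implicit.
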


\begin{proof}
We first compute, for each $a \in [N]$, the degree of $a$ in $\HH^S$. The degree of $a$ is one half times the number of solutions of $c + d  = a + b$ (we divide by two since permuting $c$ and $d$ gives the same solution in our context). We also allow for a moment solutions with repeated entries, since this only contributes with a $O(N)$ term to the degree. For each $b \in [N]$, there are
\[
\min \{ a+b-1, N\} - \max\{a+b-N,1\} + O(1)
\]
solutions of that equation. Therefore
\begin{align*}
2d_{\HH^S}(a) &= \sum_{b=1}^{N-a} (a+b) + \sum_{b=N-a+1}^{N} (2N - a - b) + O(N) \\
&= \frac{N^2}{2} + a(N-a) + O(N)
\end{align*}
and so $d_{\HH^S}(a) = N^2/4 + a(N-a)/2 + O(N)$. The first part of the lemma now follows by observing that
\[
\sum_{a =1}^{n} a(N-a) = \frac{N^3}{6} + O(N^2).
\]
Finally we have
\begin{align*}
N\sigma^2(\HH^S) &= \sum_{a = 1}^{N} \left( \frac{a(N-a)}{2} - \frac{N^2}{12} \right)^2 + O(N^4) \\
&= \frac{N^5}{144} - \frac{N^2}{12} \sum_{a =1}^{N} a(N-a) + \frac{1}{4}\sum_{a=1}^{N} a^2(N-a)^2 + O(N^4) \\
&= \frac{N^5}{144} - \frac{N^5}{72} + \frac{N^5}{120} + O(N^4) \\
&= \frac{N^5}{720} + O(N^4),
\end{align*}
where we used
\[
\sum_{a =1}^{n} a^2(N-a)^2 = \frac{N^5}{30} + O(N^4).
\]
\end{proof}

In the uniform model, we may obtain now the following result from Theorem \ref{thm:main}.

\begin{theorem}\label{thm:sidon_m}
Let $0 \le m \le N$ be such that $t = m/N \le 1/2$. Let $a_N$ be a sequence such that
\[
t^{7/2}N^{5/2} (\log N)^{1/2}\, \ll\, a_N \, \ll\, t^{17/4}N^3.
\]
Then
\[
\pr{D^{\HH^{S}}(B_m) \ge a_N}\, =\, \exp \left( \frac{-(360+o(1))a_N^2}{(1-t)t^{7}N^5} \right).
\]
\end{theorem}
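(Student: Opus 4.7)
The plan is to derive Theorem~\ref{thm:sidon_m} as a direct specialisation of Theorem~\ref{thm:main} to the hypergraph $\HH^S$, with parameters $k=4$ and $r=3$. Before I can invoke Theorem~\ref{thm:main} I must verify the two structural hypotheses on $\HH^S$ and confirm that the range of $a_N$ stated here matches the admissible range in the main theorem.

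First I would check the maximum triple-degree. The claim I want to establish is $\Delta_3(\HH^S)\le 3$. Given any triple $\{a,b,c\}\subseteq [N]$, an edge of $\HH^S$ containing $\{a,b,c\}$ is determined by a fourth distinct element $d$ satisfying one of the three equations
\[
a+b=c+d,\qquad a+c=b+d,\qquad b+c=a+d,
\]
each of which determines $d$ uniquely. Hence at most three such edges exist, and we may take $C=3$ for the hypothesis $\Delta_r(\HH^S)\le C$. For the variance hypothesis, Lemma~\ref{lem:var_sidon} gives $\sigma^2(\HH^S)=(1+o(1))N^4/720$, which comfortably satisfies $\sigma^2(\HH^S)\ge N^{2r-2}/C=N^4/C$ for any sufficiently large constant $C$.

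Next I would verify that the interval for $a_N$ stated in Theorem~\ref{thm:sidon_m} is precisely the interval appearing in Theorem~\ref{thm:main} when $k=4$ and $r=3$. Indeed,
\[
t^{k-1/2}N^{r-1/2}(\log N)^{1/2}=t^{7/2}N^{5/2}(\log N)^{1/2},
\]
and
\[
t^{k-1/2+(k-1)/2(r-1)}N^r=t^{7/2+3/4}N^3=t^{17/4}N^3,
\]
so the hypothesis on $a_N$ is exactly the one needed to apply Theorem~\ref{thm:main}.

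Finally I would substitute into the exponent given by Theorem~\ref{thm:main}. Using $\sigma^2(\HH^S)=(1+o(1))N^4/720$ and $2k-1=7$, the rate function becomes
\[
\frac{a_N^2}{2(1-t)t^{2k-1}\sigma^2(\HH^S)N}=\frac{a_N^2}{2(1-t)t^7\cdot(1+o(1))N^4/720\cdot N}=\frac{(360+o(1))a_N^2}{(1-t)t^7 N^5},
\]
which is precisely the exponent in the statement. Since the whole argument is essentially a verification of hypotheses and a numerical simplification of the rate, there is no real obstacle; the only moment of substance is the observation that three of the four entries in a Sidon quadruple determine the fourth in at most three ways, which underlies the bound $\Delta_3(\HH^S)=O(1)$ and thereby the choice $r=3$.
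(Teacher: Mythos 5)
Your proposal is correct and follows the same route the paper implicitly takes: the paper states Theorem~\ref{thm:sidon_m} as an immediate consequence of Theorem~\ref{thm:main} without spelling out the verification, and you supply exactly the needed checks --- $\Delta_3(\HH^S)\le 3$ (so $r=3$, $k=4$), $\sigma^2(\HH^S)=(1+o(1))N^4/720\ge N^4/C$ via Lemma~\ref{lem:var_sidon}, the matching $a_N$-interval, and the arithmetic $1/(2\cdot 1/720)=360$ in the exponent.
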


We conclude obtaining the following result in the binomial model, where we use that $e(\HH^{S}) = (1+o(1))N^3/12$, by Lemma \ref{lem:var_sidon} and $\bar{d} = 4e(\HH^{S})/N$, since the hypergraph is $4$-uniform.

\begin{theorem}\label{thm:sidon_p}
Given a sequence $p = p_N$ which may be a constant in $(0,1/2)$ or converge to $0$, let $\delta_N$ be a sequence satisfying
\[
\sqrt{\frac{\log N}{pN}} \, \ll\, \delta_N\, \ll\, p^{1/4}\,.
\]
Then
\[
\pr{ D^{\HH^{S}}(B_p)\, \ge\, \delta_N p^4e(\HH^S)} \,= \, \exp \left( \frac{-(5+o(1))\delta_N^2pN}{162(1-p)} \right)\, .
\]
\end{theorem}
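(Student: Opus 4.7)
The plan is to deduce Theorem~\ref{thm:sidon_p} as a direct application of Theorem~\ref{thm:pworld} with $k=4$ and $r=3$, combined with the parameter computations of Lemma~\ref{lem:var_sidon} and the remark that $e(\HH^S) = (1+o(1))N^3/12$ (which follows from $\bar d(\HH^S) = 4e(\HH^S)/N$). The only genuine preliminary step is to verify the two structural hypotheses required by Theorem~\ref{thm:pworld}: $\Delta_r(\HH^S) \le C$ and $\sigma^2(\HH^S) \ge N^{2r-2}/C$ for some constant $C$, at the chosen value of $r$. The variance bound is immediate from Lemma~\ref{lem:var_sidon}, since $\sigma^2(\HH^S) = (1+o(1))N^4/720$ and $N^{2r-2}=N^4$.

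For the maximum-degree condition, I would argue that $\Delta_3(\HH^S)\le 3$. Fix any three vertices $a,b,c\in[N]$; any edge of $\HH^S$ containing $\{a,b,c\}$ has the form $\{a,b,c,d\}$ with the four elements admitting a partition into two pairs of equal sum. The element $d$ must pair with exactly one of $a,b,c$, giving the three equations $d+a = b+c$, $d+b = a+c$, $d+c = a+b$, each of which determines $d$ uniquely. So $\Delta_3(\HH^S) = O(1)$, as required. (Note that $\Delta_2(\HH^S)$ is only $O(N)$, which is why $r=3$ is the correct choice, and why the exponent $(k-r)/(2(r-1)) = 1/4$ in the upper bound on $\delta_N$ matches the hypothesis $\delta_N \ll p^{1/4}$ of Theorem~\ref{thm:sidon_p}.)

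With these hypotheses verified and the interval $\sqrt{\log N/(pN)} \ll \delta_N \ll p^{(k-r)/(2(r-1))} = p^{1/4}$ matching the statement, Theorem~\ref{thm:pworld} yields
\[
\pr{D^{\HH^S}(B_p)\ge \delta_N p^4 e(\HH^S)} = \exp\!\left(\frac{-(1+o(1))\delta_N^2\, p\, e(\HH^S)^2}{2(1-p)\big(\bar d(\HH^S)^2+\sigma^2(\HH^S)\big)N}\right).
\]
It then remains to evaluate the constant. Plugging in $e(\HH^S)^2 = (1+o(1))N^6/144$, $\bar d(\HH^S)^2 = (1+o(1))N^4/9$, and $\sigma^2(\HH^S) = (1+o(1))N^4/720$, one gets $\bar d^2 + \sigma^2 = (1+o(1))(80+1)N^4/720 = (1+o(1))\,9N^4/80$, so the constant in the exponent is $\tfrac{1/144}{2\cdot 9/80} = \tfrac{80}{2592} = \tfrac{5}{162}$, as claimed.

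There is no real obstacle here: the proof is a routine specialisation of Theorem~\ref{thm:pworld}. The one small thing worth being careful about is the sentence in the statement about restricting to solutions with $x,y,z,w$ pairwise distinct. Degenerate solutions (those with a repetition among the four coordinates) correspond to edges of a $3$-uniform auxiliary hypergraph with $O(N^2)$ edges, whose contribution to both the mean and fluctuations is lower-order; in particular it does not affect the leading asymptotics of $\bar d$, $\sigma^2$, or $e(\HH^S)$, nor the deviation probability at the resolution we are seeking. This justifies working throughout with $\HH^S$ as defined.
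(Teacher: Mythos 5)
Your proof is correct and takes essentially the same route as the paper: apply Theorem~\ref{thm:pworld} with $k=4$, $r=3$, using the parameters from Lemma~\ref{lem:var_sidon}. The arithmetic checks out: $e(\HH^S)^2/(2(\bar d^2+\sigma^2)) = (N^6/144)/(2\cdot 9N^4/80) = 5N^2/162$, giving the stated constant. One small improvement over the paper's exposition: you explicitly verify $\Delta_3(\HH^S)\le 3$ (each of the three pairings of $\{a,b,c,d\}$ determines $d$ uniquely), whereas the paper leaves this hypothesis of Theorem~\ref{thm:pworld} unchecked for $\HH^S$; this is a genuine prerequisite, so spelling it out is worthwhile.
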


\bibliographystyle{plain}
\bibliography{references}

\end{document}